\newtheorem{theorem}{Theorem}[section]
\newtheorem{thmx}{Theorem}
\newtheorem{prop}[theorem]{Proposition}
\newtheorem{propx}[thmx]{Proposition} %letter-numbered proposition, same index as letter numbered thms.
\newtheorem{lemma}[theorem]{Lemma}
\newtheorem{corollary}[theorem]{Corollary}
\newtheorem{corx}[thmx]{Corollary} % "letter-numbered" corollary, same index as letter numberd thms
\theoremstyle{remark}
\newtheorem{remark}[theorem]{Remark}
\numberwithin{equation}{section} %%% Number equations as Section.Equation  e.g. first equation in section 1 is labeled as (1.1) %%%
\DeclareMathOperator{\Heis}{Heis}
\DeclareMathOperator{\Diff}{Diff}
\DeclareMathOperator{\Isom}{Isom}
\DeclareMathOperator{\Fix}{Fix}
\DeclareMathOperator{\Ind}{Ind}
\DeclareMathOperator{\proj}{proj}
\DeclareMathOperator{\graph}{graph}
\newcommand{\R}{{\mathbb R}}
\newcommand{\N}{{\mathbb N}}
\newcommand{\Z}{{\mathbb Z}}
\newcommand{\C}{{\mathbb C}}
\newcommand{\T}{{\mathbb T}}
\newcommand{\F}{\mathscr{F}}
\newcommand{\id}{{\text{id}}}
\newcommand{\G}{\Gamma}
\title{Smooth Models for Certain Fibered Partially Hyperbolic Systems}
\author{Meg Doucette\thanks{Department of Mathematics, University of Chicago \newline This material is based upon work supported by the National Science Foundation Graduate Research Fellowship Program under Grant No. DGE-1746045.}}
\date{July 8, 2023}
\begin{document}
\maketitle

\begin{abstract}
	We prove that under restrictions on the fiber, any fibered partially hyperbolic system over a nilmanifold is leaf conjugate to a smooth model that is isometric on the fibers and descends to a hyperbolic nilmanifold automorphism on the base. One ingredient is a result of independent interest generalizing a result of Hiraide: an Anosov homeomorphism of a nilmanifold is topologically conjugate to a hyperbolic nilmanifold automorphism. 
\end{abstract}

\section{Introduction}
	This paper establishes the existence of smooth models for certain fibered partially hyperbolic systems.   An ingredient in our proof is a classification of Anosov homeomorphisms on nilmanifolds.  We explain now through an example what some of these terms mean.
	
	The three-dimensional Heisenberg group is given by 
		$$
			\Heis:=
			\left\{ 
				A_{(x,y,z)}=
				\begin{pmatrix}
					1&x&z\\ 0&1&y\\0&0&1
				\end{pmatrix} 
				: \ x,y,z\in \R
			\right\}
		$$
	with matrix multiplication as the group operation. The group $\Heis$ is a nilpotent Lie group whose center consists of the matrices $A_{(0,0,z)}$, with $z\in \R$.  Any automorphism of $\Heis$ must preserve this center.
	
	Nilmanifolds are quotients of nilpotent Lie groups by discrete subgroups.  For the Heisenberg group, a compact quotient can be obtained as follows.  Let 
		$$
			\Gamma=
			\left\{
				(x,y,z)\in H : \ x,y,2z\in \Z
			\right\},
		$$
	where we use $(x,y,z)$ to denote the matrix $A_{(x,y,z)}$.   The quotient $\Heis /\Gamma$ is a compact nilmanifold, an example of a {\em Heisenberg nilmanifold}.  It is a fiber bundle over the $2$-torus ${\mathbb T}^2$ with fiber the circle ${\mathbb T}$, where the fibers lie in the ``$z$-direction," tangent to the center of the Lie algebra of $\Heis$.	 

	Any automorphism of $\Heis$ that preserves $\Gamma$ descends to a diffeomorphism of $\Heis/\Gamma$: since the automorphism preserves the center of $\Heis$, the quotient diffeomorphism preserves the bundle structure.	The interesting quotient diffeomorphisms are examples of {\em fibered partially hyperbolic diffeomorphisms}.  An example is the map  $f_0:\Heis/\Gamma \to \Heis/\Gamma$ given by
		$$
			f_0(x,y,z)=\left(2x+y,x+y,z+x^2+\frac{y^2}{2}+xy\right).
		$$
	Since $f_0$ preserves the smooth fibration ${\mathbb T}\hookrightarrow \Heis/\Gamma \twoheadrightarrow {\mathbb T}^2$, it induces a diffeomorphism of the base ${\mathbb T}^2$, in this case the hyperbolic linear automorphism $(x,y)\mapsto (2x+y,x+y)$.

	This example is partially hyperbolic, meaning that the tangent bundle $TN$ to $N=\Heis/\Gamma$ splits as a $df_0$-invariant direct sum $TN=E^s\oplus E^c\oplus E^u$ such that for all $x\in N$ and all unit vectors $v^s\in E^s(x), \ v^c\in E^c(x), $ and $v^u\in E^u(x)$, we have that
		$$
				\|d_xf(v^s)\|<\|d_xf(v^c)\|<\|d_xf(v^u)\| 
			\quad \text{ and } \quad
				\|d_xf(v^s)\|<1<\|d_xf(v^u)\|.
		$$
	That is, the center direction is dominated by the stable and unstable directions.

	Hammerlindl and Potrie proved that if $f\colon N\to N$ is partially hyperbolic and homotopic to $f_0$, then $f$ is {\em leaf conjugate} to $f_0$, meaning that there exists a homeomorphism $h:N\to N$, an $f$-invariant foliation $W^c_{f}$ tangent to the center direction of $f$, and an $f_0$-invariant foliation $W^c_{f_0}$ tangent to the center direction of $f_0$ such that $h$ maps center leaves of $f_0$ to center leaves of $f$ (i.e. $h(W^c_{f_0}(x))=W^c_f(h(x))$) and $h(f_0(W^c_{f_0}(x)))=f(h(W^c_{f_0}(x)))$ \cite{HammerlindlPotrie}. In this example, $f_0$ is a smooth model, and any partially hyperbolic diffeomorphism homotopic to it is leaf conjugate to it.

	In this paper, we consider the class of {\em fibered partially hyperbolic diffeomorphisms}.  These are partially hyperbolic diffeomorphisms, that, like $f_0$ above, have an integrable center bundle $E^c$, tangent to an invariant fibration by compact submanifolds.  More precisely $f\colon M\to M$, where $M$ is a closed Riemannian manifold, is a \textit{fibered partially hyperbolic diffeomorphism}\footnote{This is also known as a {\em{fibered partially hyperbolic system with $C^k$ fibers}}.} with ($C^k$, $k\geq 1$) fiber $X$ if there exists an $f$-invariant continuous fiber bundle $\pi:M\to B$ (for some manifold $B$) with $C^k$ fibers modeled on $X$, which are tangent to $E^c$ and such that the $k$-jets along fibers are continuous in $M$.\footnote{ In other words, $M$ is a continuous $X$-bundle with structure group $\Diff^k(X)$.} We say that a fibered partially hyperbolic system $(f,\pi,M)$ is $C^k$ if the fiber bundle $\pi:M\to B$ is $C^k$. Note the distinction between a $C^k$ fibered partially hyperbolic system and a fibered partially hyperbolic system with $C^k$ fibers: in a $C^k$ fibered partially hyperbolic system, we require that the bundle $\pi:M\to B$ is $C^k$, whereas in a fibered partially hyperbolic system with $C^k$ fibers, the bundle $\pi:M\to B$ is merely required to be continuous.

	Fibered partially hyperbolic systems form a rich class of dynamical systems. Beyond the relatively simple examples of skew products on trivial bundles, fibered systems appear as automorphisms of nilmanifolds and play a role in the construction of exotic partially hyperbolic systems (e.g. \cite{gogolev_new_2015}). The classification of fibered partially-hyperbolic diffeomorphisms up to leaf conjugacy is an open question in general, except in low dimension. Such diffeomorphisms appear in several rigidity contexts (e.g. \cite{avila_absolute_2022}, \cite{damjanovic_pathology_2021}, \cite{nitica_local_2001}).
	
	Here is our main result.

	\begin{thmx}\label{thm:smoothmodel}
		Let $f:M\to M$ be a fibered partially hyperbolic system with quotient a nilmanifold $B$ and $C^1$ fibers $F$ (where $F$ is a closed manifold). Suppose that the structure group of the $F$-bundle $M$ is $G\subset \Diff^1(F)$ and that there exists a Riemannian metric on $F$ and a subgroup $I$ of $\Isom(F)\cap G$ such that the inclusion $I\hookrightarrow G$ is a homotopy equivalence.
		
		Then $f$ is leaf conjugate to a $C^\infty$ fibered partially hyperbolic system $g:\widehat{M}\to \widehat{M}$ such that
		
		\begin{enumerate}
		\item The projection of the leaf conjugacy to $B$ is a map homotopic to the identity; \label{mainthm: proj leaf conj}
		\item the $F$-bundles $M$ and $\widehat M$ are isomorphic\footnote{This is implicit from the definition of leaf conjugacy, but we state it explicitly for clarity.}; \label{mainthm: isomorphic bundles}
		\item the structure group of $\widehat M$ is $\Isom(F)$; and \label{mainthm structure grp bullet}
		\item the projection of $g$ to $B$ is a hyperbolic nilmanifold automorphism. \label{mainthm: proj nilmanifold automorphism}
		\end{enumerate}
	\end{thmx}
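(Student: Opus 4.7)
My plan proceeds in four conceptual steps: descend $f$ to the base and straighten it via the announced generalization of Hiraide's theorem; reduce the structure group of $M$ to $\Isom(F)$; build a smooth model $g$ by lifting the base automorphism to the resulting principal bundle; and realize the leaf conjugacy as a bundle isomorphism over a map homotopic to the identity.

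\textbf{Base straightening and structure-group reduction.} Since $f$ is fibered partially hyperbolic with center tangent to the fibers, the stable and unstable bundles project injectively into $TB$, so $f$ descends to an Anosov homeomorphism $\bar f\colon B\to B$. The announced generalization of Hiraide's theorem then supplies a hyperbolic nilmanifold automorphism $A\colon B\to B$ and a topological conjugacy $h_B$ with $A\circ h_B=h_B\circ\bar f$. Because nilmanifolds are aspherical, self-maps are determined up to homotopy by their $\pi_1$-action, so after composing $h_B$ with a suitable covering translation (equivalently, replacing $A$ by a conjugate automorphism), I can arrange $h_B\simeq\id_B$, securing property (1). For the structure-group reduction, the homotopy equivalence $I\hookrightarrow G$ yields a homotopy equivalence $BI\to BG$, so $M$ admits a reduction to an $I$-bundle, and hence to an $\Isom(F)$-bundle $\widehat M\to B$ isomorphic to $M$ as an $F$-bundle. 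Smoothness of $\widehat M$ can be arranged because $B$ is smooth and $\Isom(F)$ is a Lie group. This gives properties (2) and (3).

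\textbf{Smooth model and leaf conjugacy.} Since $f$ is a bundle morphism over $\bar f$, one has $\bar f^*M\cong M$, and as $\bar f\simeq A$ this forces $A^*\widehat M\cong\widehat M$. Letting $P\to B$ denote the principal $\Isom(F)$-bundle associated to $\widehat M$, this is equivalent to the existence of a continuous $\Isom(F)$-equivariant bundle map $\tilde A\colon P\to P$ covering $A$; since the space of such lifts is a torsor under the gauge group of $P$ and smooth gauge transformations are dense in continuous ones, I can take $\tilde A$ smooth. Defining $g([p,x])=[\tilde A(p),x]$ on $\widehat M=P\times_{\Isom(F)}F$ produces a smooth bundle map that acts fiberwise by isometries and projects to $A$, giving property (4). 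Partial hyperbolicity follows from the Hadamard--Perron theorem applied to $g$ over the hyperbolic $A$: the vertical (fiber) subbundle is $dg$-invariant and preserved in norm, and the stable manifold theorem produces $dg$-invariant distributions $E^s,E^u$ transverse to the fibers on which $dg$ contracts and expands like $dA$, yielding the required domination. For the leaf conjugacy, $h_B\simeq\id_B$ gives $h_B^*\widehat M\cong M$, so there is a bundle isomorphism $h\colon\widehat M\to M$ covering $h_B$. Such $h$ automatically sends fibers (center leaves of $g$) to fibers (center leaves of $f$), and $h\circ g$, $f\circ h$ both cover $h_B\circ A=\bar f\circ h_B$, so they send each fiber of $\widehat M$ to the same target fiber of $M$ — exactly the leaf conjugacy condition.

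\textbf{Main obstacle.} I expect the technical heart to be the third stage: producing the smooth $\Isom(F)$-equivariant lift $\tilde A$ of $A$ to the principal bundle $P$, and then verifying that the resulting $g$ is genuinely partially hyperbolic with the fibers as its center foliation. The existence of $\tilde A$ rests on unwinding the homotopy $\bar f\simeq A$ at the level of principal bundles in a way compatible with the isometric structure group, and the approximation argument must be performed without destroying equivariance. Once these ingredients are in place, the remaining conclusions (the structure-group properties and the leaf conjugacy) are essentially formal consequences of bundle theory together with $h_B\simeq\id_B$.
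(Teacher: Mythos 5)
Your outline follows the same architecture as the paper's proof: descend to the base, apply the announced generalization of Hiraide's theorem to get a hyperbolic nilmanifold automorphism $A$ conjugate to $\bar f$ via $h_B\simeq\id$, reduce the structure group of the $F$-bundle to $\Isom(F)$ and smooth it, lift $A$ to a fiberwise-isometric smooth map $g$ on the resulting bundle, and realize the leaf conjugacy as a bundle isomorphism. So the overall route is correct. However there are two non-cosmetic gaps and one minor orientation slip worth flagging.

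\textbf{The descended map is Anosov.} You assert that ``the stable and unstable bundles project injectively into $TB$, so $f$ descends to an Anosov homeomorphism.'' That the quotient is a \emph{homeomorphism} is clear, but that it is an \emph{Anosov homeomorphism} (expansive with shadowing) is not automatic --- there is no differential structure downstairs to appeal to, and one must genuinely verify both expansivity and shadowing for the quotient map. The paper invokes Bohnet--Bonatti (their result that a partially hyperbolic diffeomorphism with compact center foliation without holonomy induces an Anosov homeomorphism on the leaf space); you should cite or reprove something of that form, since this is a real hypothesis to be checked, not a formality.

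\textbf{Smoothing the lift of $A$.} Your argument that $\tilde A$ can be taken smooth --- ``the space of such lifts is a torsor under the gauge group of $P$ and smooth gauge transformations are dense in continuous ones'' --- does not close the gap as stated. Postcomposing a merely continuous lift $\tilde A_0$ with a smooth gauge transformation still yields only a continuous map; the density of smooth gauge transformations among continuous ones tells you nothing unless you already have one smooth lift to start from, which is exactly what you are trying to produce. The substantive fact you need is that $\widehat M$ and $A^*\widehat M$, which you only know to be isomorphic as \emph{continuous} $\Isom(F)$-bundles, are isomorphic as \emph{smooth} bundles. The paper establishes this (its Lemma in Section 4) by comparing classifying maps $B\to B\Isom(F)$: the two classifying maps are smooth and homotopic, and Whitney approximation produces a smooth homotopy relative to the endpoints, whence a smooth bundle isomorphism. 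That is the missing ingredient; the gauge-torsor observation is a detour that doesn't supply it.

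\textbf{Direction of the leaf conjugacy.} You take a bundle isomorphism $h:\widehat M\to M$ covering $h_B$ and claim $h\circ g$ and $f\circ h$ project to the same base map. But with $A\circ h_B=h_B\circ\bar f$, the projection of $h\circ g$ is $h_B\circ A$ while the projection of $f\circ h$ is $\bar f\circ h_B$, and these agree only if $A$ and $\bar f$ are both conjugated to each other by $h_B$ in both directions. The correct choice is a bundle isomorphism $M\to\widehat M$ covering $h_B$ (or $\widehat M\to M$ covering $h_B^{-1}$); with that orientation the two compositions both project to $h_B\circ\bar f=A\circ h_B$. This is easily fixed, but as written the verification fails.

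Your Hadamard--Perron/graph-transform sketch for partial hyperbolicity is in the right spirit, and the key observation --- that $dg$ is an isometry on the vertical bundle because $g$ acts by isometries on fibers --- is present; the paper carries this out via an explicit linear graph transform on sections $\hat E^u\to E^c$, using the isometry to get $\|K_p\|=1$ and the adapted metric on $B$ to get $\|A_p^{-1}\|\le\lambda^{-1}$, so the transform is a $\lambda^{-1}$-contraction. If you fill in the two gaps above, the rest of your plan goes through essentially as in the paper.
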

	
	\begin{remark}
		\begin{itemize}
		\item 
			The base manifold $B$ in Theorem \ref{thm:smoothmodel} a priori might not have a smooth structure. In our results, we assume $B$ is a \textit{topological nilmanifold}, meaning that $B$ is homeomorphic to a nilmanifold. This case can easily be reduced to the case where $B$ is a nilmanifold by replacing the projection map for the bundle with the projection map composed with the homeomorphism. Thus, our results easily extend to a topological nilmanifold $B$.
		\item 
			\ref{mainthm structure grp bullet}. implies that there exists a smooth Riemannian metric on $\widehat M$ adapted to $g$ such that $g$ is isometric on fibers. This will be clear from the construction of $g$ in the proof of Theorem \ref{thm:smoothmodel}.
		\item 
			The fibered partially hyperbolic system $g:\widehat M \to \widehat M$ may act differently on the fibers than the original fibered partially hyperbolic system $f:M\to M$; that is, if $h:M\to \widehat M$ is the leaf conjugacy from Theorem \ref{thm:smoothmodel}, then $h\circ f$ and $g\circ h$ may not be homotopic. 
		\end{itemize}
	\end{remark}
	
	The main assumption in Theorem \ref{thm:smoothmodel} is that the structure group of the $F$-bundle contain a homotopy equivalent subgroup of $\Isom(F)$. We discuss the necessity of this assumption to our proof in Remark \ref{remark:assumption on structure group}. Finding circumstances where this assumption applies (and thus we can apply Theorem \ref{thm:smoothmodel}) comes down to studying the relationship between $\Diff^1(F)$, $\Isom(F)$, and their subgroups.\footnote{In the following discussion we often replace $\Diff^1(F)$ with $\Diff^\infty(F)$, which we can do since $\Diff^\infty(F)\hookrightarrow\Diff^1(F)$ is a homotopy equivalence (Proposition \ref{prop:InclusionDiffeoGrps}).} Namely, for which manifolds $F$ and which subgroups $G$ of $\Diff^\infty(F)$ is there a subgroup $H\subset \Isom(F)\cap G$ such that the inclusion $H\hookrightarrow G$ is a homotopy equivalence?
	
	Note that even without the assumption that the structure group of the $F$-bundle contain a homotopy equivalent subgroup of $\Isom(F)$, our argument gives that the initial fibered partially hyperbolic system $f:M\to M$ is leaf conjugate to an extension over a hyperbolic nilmanifold automorphism. For further discussion and details, see Remark \ref{remark:assumption on structure group}. Notably, in the case where the $F$-bundle from the fibered partially hyperbolic system $f:M\to M$ is trivial (i.e. $M=B\times F$), we can construct this extension over a hyperbolic nilmanifold automorphism to be partially hyperbolic.
	
	\begin{propx}\label{prop: trivial case main thm}
		Let $f:M\to M$ be a fibered partially hyperbolic system with quotient a nilmanifold $B$ and $C^1$ fibers $F$ (where $F$ is a closed manifold). Suppose that the $F$-bundle $M$ is trivial (i.e. that the $F$-bundle $M$ is isomorphic to $B\times F$).
		
		Then $f$ is leaf conjugate to a $C^\infty$ fibered partially hyperbolic system $g:\widehat{M}\to \widehat{M}$ such that \ref{mainthm: proj leaf conj}., \ref{mainthm: isomorphic bundles}, and \ref{mainthm: proj nilmanifold automorphism}. from Theorem \ref{thm:smoothmodel} hold.
	\end{propx}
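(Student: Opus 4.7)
My plan is to bypass entirely the structure-group issue that drives the proof of Theorem \ref{thm:smoothmodel} by exploiting triviality of the bundle: I will take $g$ to act as the identity on each fiber, over a linearization of the base dynamics. Fix a trivialization $M\cong B\times F$ and write $f(b,x)=(\bar f(b),f_b(x))$ for the induced base map $\bar f:B\to B$ and a continuous family of $C^1$ diffeomorphisms $f_b$ of $F$. Because $E^c$ is tangent to the fibers of $\pi$, the projection $d\pi$ carries $E^s\oplus E^u$ isomorphically onto $TB$ and transports the hyperbolic splitting of $f$ to a $d\bar f$-invariant hyperbolic splitting of $TB$, so $\bar f$ is an Anosov diffeomorphism, hence an Anosov homeomorphism, of the nilmanifold $B$.

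Apply the classification of Anosov homeomorphisms on nilmanifolds (the result advertised in the abstract) to produce a hyperbolic nilmanifold automorphism $A:B\to B$ and a homeomorphism $\psi:B\to B$, homotopic to the identity, satisfying $\psi\circ\bar f=A\circ\psi$. Set $\widehat M:=B\times F$ and define
\[ g(b,x):=(A(b),x). \]
Because $A$ is a smooth hyperbolic automorphism and the fiberwise action is trivial, $g$ is a $C^\infty$ fibered partially hyperbolic diffeomorphism on the smooth trivial bundle $\widehat M\to B$, with splitting $T\widehat M=E^s_A\oplus TF\oplus E^u_A$ coming from $dg=dA\oplus\id$; the domination inequalities reduce to hyperbolicity of $A$ once $F$ carries any smooth Riemannian metric. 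By construction $\widehat M\cong M$ as $F$-bundles and $g$ projects to the hyperbolic nilmanifold automorphism $A$, giving items \ref{mainthm: isomorphic bundles}.\ and \ref{mainthm: proj nilmanifold automorphism}.\ from Theorem \ref{thm:smoothmodel}.

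Define the candidate leaf conjugacy $h:M\to\widehat M$ by $h(b,x):=(\psi(b),x)$. Its projection to $B$ is $\psi$, which is homotopic to the identity, giving \ref{mainthm: proj leaf conj}. The center leaves of both $f$ and $g$ are the fibers $\{b\}\times F$, so $h$ carries center leaves to center leaves, and one checks directly that
\[ h(f(\{b\}\times F))=\{\psi(\bar f(b))\}\times F=\{A(\psi(b))\}\times F=g(h(\{b\}\times F)), \]
which is the required leaf-conjugacy relation.

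The only substantive obstacle is the classification of Anosov homeomorphisms on nilmanifolds; once that is in hand, triviality of the bundle collapses the rest of the argument, since the fiberwise dynamics can be taken trivial and none of the isometric-approximation or structure-group machinery needed in the general case of Theorem \ref{thm:smoothmodel} has to be invoked.
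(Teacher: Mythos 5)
Your approach is essentially the same as the paper's, but there is a real gap in the step where you conclude that $\bar f$ is an Anosov diffeomorphism. The fibration $\pi:M\to B$ is only assumed to be a \emph{continuous} fiber bundle with $C^1$ fibers; it is not a $C^1$ submersion. Consequently $d\pi$ need not exist, and there is no reason for $\bar f$ to be differentiable at all --- it is merely a homeomorphism of $B$. Your claim that ``$d\pi$ carries $E^s\oplus E^u$ isomorphically onto $TB$ and transports the hyperbolic splitting to a $d\bar f$-invariant hyperbolic splitting of $TB$'' therefore has no content in this generality, and ``Anosov diffeomorphism'' is the wrong conclusion. What is true, and what your argument actually needs, is that $\bar f$ is an Anosov \emph{homeomorphism} (expansive plus shadowing). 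The paper obtains this not by pushing forward the splitting but by invoking Bohnet--Bonatti (Lemma~\ref{lem:BohnetBonatti}): the center foliation of a fibered partially hyperbolic system has compact leaves with trivial holonomy, and the induced map on the leaf space of such a foliation is an Anosov homeomorphism.

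Once that step is repaired by citing Lemma~\ref{lem:BohnetBonatti}, the rest of your argument is correct and essentially identical to the paper's proof of this proposition: apply Theorem~\ref{thm:HiraideNilmanifolds} to produce $A$ and $\psi$, set $\widehat M=B\times F$, define $g(b,x)=(A(b),x)$, and take $h(b,x)=(\psi(b),x)$ as the leaf conjugacy. Your explicit verification that $h$ carries center leaves to center leaves and satisfies the leaf-conjugacy identity, and your observation that $T\widehat M=E^s_A\oplus TF\oplus E^u_A$ is a partially hyperbolic splitting for $g$ because $dg=dA\oplus\id$ with $A$ hyperbolic, are both correct; the paper leaves these as implicit but they are exactly what falls out of Lemma~\ref{lem:liftingconjugacy} and Proposition~\ref{prop: lift of Anosov isometry on fibers is ph} specialized to the trivial bundle.
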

	
	The proof of Proposition \ref{prop: trivial case main thm} is given at the end of Section \ref{sect:proof of main thm}.
	
	The following two corollaries come from answering the above question about subgroups $H\subset \Isom(F)\cap G$ for specific $F$ and $G$. They are by no means the only such corollaries. (For example, the conclusion of Corollary \ref{cor:StructureGroupConnectedComponent} holds for any $F$ such that $\Diff^1_0(F)$ is contractible.) 
	
	\begin{corx}\label{cor:StructureGroupDiff}
		Let $f:M\to M$ be a fibered partially hyperbolic system with quotient a nilmanifold $B$ and $C^1$ fibers $F$, where $F$ is 
			\begin{enumerate}
			\item a $n$-sphere $S^n$ for $n=1,2,3$, or \label{cor:fibersS^n}
			\item a hyperbolic $3$-manifold \label{cor:fibershyperbolic3}
			\end{enumerate}
		 Then $f$ is leaf conjugate to a $C^\infty$ fibered partially hyperbolic system $g:\widehat{M}\to \widehat{M}$, which induces a hyperbolic nilmanifold automorphism on the base.
	\end{corx}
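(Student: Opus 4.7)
The plan is to deduce Corollary \ref{cor:StructureGroupDiff} directly from Theorem \ref{thm:smoothmodel} in both cases. Although the corollary does not specify the structure group $G$ of the $F$-bundle $M$, every $F$-bundle with structure group contained in $\Diff^1(F)$ may be regarded as having structure group $\Diff^1(F)$ itself, so it suffices to find a Riemannian metric on $F$ together with a subgroup $I \subset \Isom(F)$ for which $I \hookrightarrow \Diff^1(F)$ is a homotopy equivalence. Combining this with Proposition \ref{prop:InclusionDiffeoGrps}, which says that $\Diff^\infty(F) \hookrightarrow \Diff^1(F)$ is a homotopy equivalence, reduces the task to showing that $\Isom(F) \hookrightarrow \Diff^\infty(F)$ is a homotopy equivalence for a judiciously chosen smooth metric on $F$; taking $I = \Isom(F)$ will then verify the hypothesis of Theorem \ref{thm:smoothmodel}.

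For case \ref{cor:fibersS^n}, I would equip $S^n$ with the round metric, so that $\Isom(S^n) = O(n+1)$. The required homotopy equivalence $O(n+1) \hookrightarrow \Diff^\infty(S^n)$ is a classical computation for $n = 1$, is Smale's theorem for $n = 2$, and is Hatcher's proof of the Smale conjecture for $n = 3$. For case \ref{cor:fibershyperbolic3}, I would equip the closed hyperbolic $3$-manifold $F$ with its hyperbolic metric; Mostow rigidity forces $\Isom(F)$ to be finite, and Gabai's resolution of the generalized Smale conjecture (extending earlier results of Hatcher and Ivanov in the Haken case) asserts precisely that $\Isom(F) \hookrightarrow \Diff^\infty(F)$ is a homotopy equivalence. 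In both cases, Theorem \ref{thm:smoothmodel} then produces the desired smooth model $g$, with the hyperbolic nilmanifold automorphism on the base supplied by conclusion \ref{mainthm: proj nilmanifold automorphism} of that theorem.

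The mathematical substance of the proof thus rests entirely on these classical computations of the homotopy type of diffeomorphism groups; the reduction to Theorem \ref{thm:smoothmodel} is essentially formal. The hardest part is not in the proof itself but in locating and invoking the correct form of each theorem on diffeomorphism groups, particularly in the hyperbolic $3$-manifold case where the history of the Smale conjecture splits across the Haken and non-Haken settings. One minor point worth checking is that enlarging the structure group of $M$ from its a priori value to all of $\Diff^1(F)$ is compatible with the conclusions of Theorem \ref{thm:smoothmodel}, but this is automatic, since those conclusions concern bundle isomorphism and leaf conjugacy rather than any particular reduction of the structure group.
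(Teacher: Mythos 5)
Your proposal is correct and takes the same route as the paper: both reduce to showing $\Isom(F) \hookrightarrow \Diff^1(F)$ is a homotopy equivalence via Proposition \ref{prop:InclusionDiffeoGrps}, then cite the classical fact for $S^1$, Smale for $S^2$, Hatcher for $S^3$, and Gabai for hyperbolic $3$-manifolds. Your explicit note that enlarging the structure group to all of $\Diff^1(F)$ is harmless is a point the paper leaves implicit, but there is no substantive difference in the argument.
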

	
	\begin{corx}\label{cor:StructureGroupConnectedComponent}
		Let $f:M\to M$ be a fibered partially hyperbolic system with quotient a nilmanifold $B$ and $C^1$ fibers $F$ and with structure group $\Diff^1_{0}(F)$. Suppose that $F$ is
			\begin{enumerate}
			\item the two or three torus, $\mathbb{T}^2, \ \mathbb{T}^3$ \label{cor:fibers torus},
			\item a hyperbolic surface, or \label{cor:fibershyperbolic2}
			\item a Haken 3-manifold \label{cor:haken}
			\end{enumerate}
		Then $f$ is leaf conjugate to a $C^\infty$ fibered partially hyperbolic system $g:\widehat{M}\to \widehat{M}$, which induces a hyperbolic nilmanifold automorphism on the base.
	\end{corx}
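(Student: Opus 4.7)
The approach is to reduce to Theorem \ref{thm:smoothmodel}, which applies here with $G=\Diff^1_0(F)$ once we exhibit a Riemannian metric on $F$ and a subgroup $I \subseteq \Isom(F)\cap \Diff^1_0(F)$ whose inclusion into $\Diff^1_0(F)$ is a homotopy equivalence. Because $\Diff^\infty(F)\hookrightarrow \Diff^1(F)$ is a homotopy equivalence (Proposition \ref{prop:InclusionDiffeoGrps}), the same holds on identity components, so it suffices to exhibit $I$ inside $\Diff^\infty_0(F)$. Once such an $I$ is identified, Theorem \ref{thm:smoothmodel} immediately supplies a $C^\infty$ fibered partially hyperbolic model $g:\widehat M \to \widehat M$ whose projection to $B$ is a hyperbolic nilmanifold automorphism (conclusion \ref{mainthm: proj nilmanifold automorphism}), which is exactly what is being claimed.

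I would proceed case by case. For \ref{cor:fibers torus}, equip $\T^n$ ($n=2,3$) with the flat metric and take $I = \Isom_0(\T^n)\cong \T^n$, the subgroup of translations; then $I \hookrightarrow \Diff^\infty_0(\T^n)$ is a homotopy equivalence by the Earle--Eells computation for $n=2$ and by Hatcher's work for $n=3$. For \ref{cor:fibershyperbolic2}, put the hyperbolic metric on the closed hyperbolic surface $\Sigma_g$; then $\Isom_0(\Sigma_g)$ is trivial and $\Diff^\infty_0(\Sigma_g)$ is contractible by Earle--Eells, so the inclusion of the trivial subgroup is automatically a homotopy equivalence. For \ref{cor:haken}, give the Haken $3$-manifold $F$ a geometric metric coming from its JSJ decomposition and take $I = \Isom_0(F)$; the combined work of Hatcher, Ivanov, and Gabai on the generalized Smale conjecture for Haken $3$-manifolds yields that $\Isom_0(F)\hookrightarrow \Diff^\infty_0(F)$ is a homotopy equivalence.

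The main obstacle is the Haken case. The surface and torus cases each boil down to a single classical theorem, but the Haken case requires several deep inputs: the JSJ decomposition in order to choose a reasonable metric, Gabai's theorem for hyperbolic pieces, and the earlier Hatcher--Ivanov results for Seifert-fibered pieces. In particular, the metric has to be chosen so that $\Isom_0$ of the whole manifold, rather than merely of each geometric piece, sits inside $\Diff^\infty_0$ as a homotopy equivalent subgroup. Granting these inputs, the rest of the argument is formal: the homotopy-equivalence hypothesis of Theorem \ref{thm:smoothmodel} is verified in each case, and the conclusion of the corollary follows verbatim from the conclusion of that theorem.
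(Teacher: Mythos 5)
Your proposal follows the same overall strategy as the paper: reduce to Theorem~\ref{thm:smoothmodel} by exhibiting, for each fiber type, a subgroup $I\subset\Isom(F)\cap\Diff^1_0(F)$ whose inclusion into $\Diff^1_0(F)$ is a homotopy equivalence, then invoke Proposition~\ref{prop:InclusionDiffeoGrps} to pass between $\Diff^1$ and $\Diff^\infty$. The torus and hyperbolic-surface cases are handled identically to the paper (translations $\T^n$, respectively the trivial subgroup paired with contractibility of $\Diff_0$).

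The one place where you differ substantively is the Haken case, and here the paper is more careful and more elementary. Rather than asserting a ``generalized Smale conjecture for Haken $3$-manifolds'' (which is not how the Hatcher--Ivanov results are stated, since a Haken manifold with nontrivial JSJ decomposition carries no canonical geometric metric), the paper splits into three explicit sub-cases: (i) if $F$ is not Seifert with coherently orientable fibers, then $\Diff_0(F)$ is contractible and $I$ can be taken trivial; (ii) if $F$ is Seifert with coherently oriented fibers and $F\neq\T^3$, then $\Diff_0(F)\simeq S^1$ and $I$ is taken to be the Seifert circle action, made isometric by averaging a metric; (iii) $F=\T^3$ is the torus case. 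Your choice $I=\Isom_0(F)$ works but adds an unnecessary burden: you would still need to \emph{identify} $\Isom_0(F)$ for your chosen metric (trivial in case (i), $S^1$ in case (ii)), which is exactly the information the paper uses directly by choosing $I$ explicitly rather than as an isometry group. You correctly flag the Haken case as the delicate one, but citing the precise Hatcher--Ivanov homotopy types of $\Diff_0(F)$ (contractible vs.\ $\simeq S^1$) and picking $I$ accordingly is cleaner than appealing to an isometry group of a non-canonical metric and an imprecisely stated theorem.
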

	
	\begin{remark}
		As noted before, these are by no means the only cases where Theorem \ref{thm:smoothmodel} can be applied to get results analogous to the results of Corollaries \ref{cor:StructureGroupDiff} and \ref{cor:StructureGroupConnectedComponent}. Notably, the conclusion of Corollary \ref{cor:StructureGroupDiff} holds for any fiber $F$ such that the inclusion $\Isom(F)\hookrightarrow \Diff(F)$ is a homotopy equivalence. For the sake of conciseness, we haven't listed in Corollary \ref{cor:StructureGroupDiff} more of the known examples of closed manifolds $F$ for which $\Isom(F)\hookrightarrow \Diff(F)$ is a homotopy equivalence. Other examples include lens spaces, prism and quaternionic manifolds, tetrahedral manifolds, octahedral manifolds, and icosahedral manifolds \cite{BamlerKleiner}.
	\end{remark}
	
	Theorem \ref{thm:smoothmodel} builds on previous work by Hirsch-Pugh-Shub and by Hammerlindl and Potrie. Hirsch, Pugh, and Shub \cite{HPS} proved that perturbations of fibered partially hyperbolic systems are fibered partially hyperbolic systems, and that the perturbed system is leaf conjugate to the original system. Hammerlindl \cite{HammerlindlTorus} proved that a partially hyperbolic diffeomorphism of $\T^3$ is leaf conjugate to a linear automorphism of $\T^3$. Hammerlindl and Potrie \cite{HammerlindlPotrie} proved an analogous result for partially hyperbolic diffeomorphisms of 3-dimensional nilmanifolds.
	
	Corollaries \ref{cor:StructureGroupDiff} and \ref{cor:StructureGroupConnectedComponent} give specific examples of cases where we can apply Theorem \ref{thm:smoothmodel} in dimensions 1, 2, and 3, and Proposition \ref{prop: trivial case main thm} gives an analogue of Theorem \ref{thm:smoothmodel} in the case of a trivial bundle. Note that Corollary \ref{cor:StructureGroupDiff} shows that the hypothesis of Theorem \ref{thm:smoothmodel} holds for all fibered partially hyperbolic systems with one dimensional fiber. Corollaries \ref{cor:StructureGroupDiff} and \ref{cor:StructureGroupConnectedComponent} also show that the conclusion of Theorem \ref{thm:smoothmodel} holds for surface bundles, although in the cases of surfaces other than $S^2$, an added assumption on the structure group of the bundle is needed. The question remains of whether, and in what conditions for other types of fibers in nontrivial bundles. 
	
	\begin{remark}
		If for the fibered partially hyperbolic system $f:M\to M$, $\dim E^s = 1$ (or $\dim E^u =1$), then the base space $B$ will always be a nilmanifold \cite{BohnetCodim1}. This allows us to replace the assumption that the quotient of the fibered partially hyperbolic system $f:M\to M$ in Theorem \ref{thm:smoothmodel} or in Proposition \ref{prop: trivial case main thm} with the assumption that $\dim E^s=1$ (or $\dim E^u =1$).
	\end{remark}

	One ingredient in the proof of Theorem \ref{thm:smoothmodel}, which is of independent interest, is the following generalization of the works of Franks-Manning and of Hiraide to Anosov homeomorphisms of nilmanifolds. 

		\begin{thmx}[{\cite[Theorem 2(1)]{Sumi}}]\label{thm:HiraideNilmanifolds}
			An Anosov homeomorphism of a nilmanifold is topologically conjugate to a hyperbolic nilmanifold automorphism via a conjugacy that is homotopic to the identity.
		\end{thmx}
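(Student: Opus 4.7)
The plan is to follow the Franks-Manning strategy, replacing smoothness inputs with the expansivity and pseudo-orbit tracing property (POTP) that any Anosov homeomorphism enjoys. Write $N=G/\Gamma$ with $G$ a simply connected nilpotent Lie group and $\Gamma$ a cocompact lattice, and let $f_*:\Gamma\to\Gamma$ be the automorphism induced by $f$ on $\pi_1(N)=\Gamma$. By Mal'cev rigidity, $f_*$ extends uniquely to a Lie group automorphism $A:G\to G$ that preserves $\Gamma$, descending to a diffeomorphism $\bar A:N\to N$. Since $N$ is a $K(\Gamma,1)$, $f$ and $\bar A$ are automatically homotopic.

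Next I would show $\bar A$ is hyperbolic. The Anosov property of $f$, via the global product structure of its lifted stable and unstable sets in $G$, forces the induced action on $H_1(N;\R)\cong \mathfrak{g}/[\mathfrak{g},\mathfrak{g}]$ to have no eigenvalue of unit modulus, by the classical Franks argument carried out in the topological category (as in Hiraide's treatment of tori). Because $dA$ is a Lie algebra automorphism respecting the lower central series, the spectrum on each graded piece $\mathfrak{g}_k/\mathfrak{g}_{k+1}$ consists of products of eigenvalues on the abelianization; a separate argument using the $f$-invariant foliations projecting to $[\mathfrak{g},\mathfrak{g}]$ excludes unit-modulus products, so $\bar A$ is hyperbolic.

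With $A$ in hand, choose the lift $\tilde f:G\to G$ that is $f_*$-equivariant in the same way as $A$. Then $x\mapsto A(x)^{-1}\tilde f(x)$ is $\Gamma$-invariant, hence descends to a continuous function on the compact $N$, so $d_G(\tilde f,A)$ is uniformly bounded. For each $x\in G$ the sequence $(A^n x)_{n\in\Z}$ is therefore a bounded-error pseudo-orbit for $\tilde f$. Using expansivity and POTP of $f$, transferred to $\tilde f$ through deck equivariance (pseudo-orbits project to pseudo-orbits on the compact $N$, are shadowed there, and lift back uniquely to $G$), there is a unique $\tilde f$-orbit shadowing $(A^n x)_{n\in\Z}$; define $h(x)$ to be its initial point. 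By uniqueness, $h$ is continuous, $\Gamma$-equivariant, at bounded $d_G$-distance from $\mathrm{id}_G$, and satisfies $\tilde f\circ h=h\circ A$. It descends to $\bar h:N\to N$ with $f\circ\bar h=\bar h\circ\bar A$ and $\bar h$ homotopic to $\mathrm{id}_N$ (the homotopy being the canonical straight-line homotopy $x\mapsto x\cdot\exp(t\log(x^{-1}h(x)))$ in exponential coordinates on $G$).

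Finally, $\bar h$ is a homeomorphism: injectivity follows from expansivity of $f$ applied to pairs of shadowing orbits sharing an initial value, and surjectivity follows from running the symmetric shadowing construction with the roles of $f$ and the hyperbolic (hence expansive-with-POTP) $\bar A$ interchanged, producing a two-sided inverse. The main obstacle is the shadowing step above: on a non-abelian $G$ one cannot simply invert a linear coboundary operator as in Franks's toral argument, so one must either transport the problem to $\mathfrak{g}$ via $\exp$ and Baker-Campbell-Hausdorff to linearize iteratively up the lower central series, or invoke a Walters-type topological stability theorem for expansive systems with the shadowing property, using $\Gamma$-equivariance to reduce the non-compactness of $G$ to compact data on $N$. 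Either route carries the technical weight of the argument, but both are standard once the algebraic setup of the first two paragraphs is in place.
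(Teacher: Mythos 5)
Your broad outline follows the same Franks--Manning--Hiraide strategy as the paper, but there are two substantive gaps in the places where the real work happens.

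First, hyperbolicity of $A$. You observe correctly that the eigenvalues of $dA$ on $\mathfrak{g}_k/\mathfrak{g}_{k+1}$ are products of eigenvalues from the abelianization, but the hard step — ruling out a unit-modulus product on some higher graded piece — is relegated to ``a separate argument using the $f$-invariant foliations projecting to $[\mathfrak{g},\mathfrak{g}]$,'' and no such argument is actually given; the stable and unstable generalized foliations of an Anosov homeomorphism have no a priori relationship with the lower central series. The paper avoids this grading argument entirely and uses Manning's identity $L(A^m)=\prod_i(1-\lambda_i^m)$ for the Lefschetz number (where the $\lambda_i$ run over \emph{all} eigenvalues of $dA$ on $\mathfrak{g}$), combined with Hiraide's fixed-point index result (Proposition \ref{propB Hiraide}) to equate $|L(f^m)|$ with the number of $m$-periodic points; the incompatibility of this count with a unit-modulus eigenvalue is what establishes hyperbolicity, uniformly across the lower central series.

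Second, the semiconjugacy direction. You propose treating $(A^n x)_n$ as a bounded-error pseudo-orbit of $\tilde f$ and shadowing it by an $\tilde f$-orbit. But the pseudo-orbit tracing property guarantees shadowing of $\delta$-pseudo-orbits only for $\delta$ small relative to a given $\varepsilon$; the error bound $C=\sup_G d(\tilde f,\tilde A)$ is a fixed, possibly large constant, and projecting to the compact $N$ does not make it small. Bounded-error shadowing is automatic for a linear hyperbolic map but not for a general Anosov homeomorphism, and this is precisely why Franks, Manning, Hiraide, and the paper all take the opposite direction: shadow $\tilde f$-orbits by $\tilde A$-orbits (equivalently, solve the cohomological equation $A\circ h = h\circ f$ using the linearity and hyperbolicity of $\tilde A$), which the paper does by citing Franks's Theorem 2.2. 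Your ``symmetric shadowing'' argument for bijectivity is appealing, but one of its two legs rests on exactly this unavailable shadowing step. The paper instead proves injectivity of the semiconjugacy directly: it establishes a global product structure for the stable and unstable generalized foliations of $\tilde f$ on the universal cover (Proposition \ref{prop:gproduct structure}, via Lemmas \ref{lem:one fixed point}--\ref{lem:intersection at most 1 pt}), reduces injectivity of $\tilde h$ to injectivity on individual stable and unstable leaves, and handles that case as in Hiraide. That reduction — not the algebraic setup or a Walters-type stability theorem — is where the bulk of the argument lives.

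Minor point: your injectivity argument ``by expansivity of $f$'' should invoke expansivity of $\tilde A$, since the two points you want to separate lie on divergent $\tilde A$-orbits shadowed by a single $\tilde f$-orbit.
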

		
	This theorem was originally proved by Sumi in \cite{Sumi}. A proof of Theorem \ref{thm:HiraideNilmanifolds}, which follows the same structure as Sumi and Hiraide's proofs in \cite{Sumi} and \cite{Hiraide_tori}, is provided in Section \ref{sec:HiraideGeneralization} for the sake of completeness. 
	
	{\em{Anosov homeomorphisms}} are generalizations of Anosov diffeomorphisms. Many of the important properties of Anosov diffeomorphisms come directly from the fact that Anosov diffeomorphisms are expansive and have the shadowing property. A homeomorphism $f:X\to X$ of a metric space is \textit{expansive} if there exists a constant $c>0$ such that for all $x,y\in X$, if $d(f^n(x),f^n(y))<c$ for all $n\in \Z$ then $x=y$. Such a constant $c$ is called an \textit{expansive constant} for $f$. 
	
	The shadowing property says that we can approximate \textit{pseudo-orbits} by actual orbits. More formally, a sequence of points $\{x_i\}_{i\in \Z}\subset X$ is called a \textit{$\delta$-pseudo-orbit} if $d(f(x_i),x_{i+1})<\delta$ for all $i\in \Z$. A point $z\in X$ is said to \textit{$\varepsilon$-shadow} a sequence of points, $\{x_i\}_{i\in \Z}$, if $d(f^i(z),x_i)<\varepsilon$ for all $i\in \Z$. We say that $f$ has the \textit{shadowing property} if for any $\varepsilon>0$, there exists $\delta>0$ such that any $\delta$-pseudo-orbit is $\varepsilon$-shadowed by a point in $X$. An expansive homeomorphism with the shadowing property is known as an \textit{Anosov homeomorphism}. 
	
	In contrast to some other weakenings of Anosov diffeomorphisms (e.g. hyperbolic homeomorphisms \cite[Section IV.9]{mane_ergodic_1987}), Anosov homeomorphisms are not assumed to have invariant foliations.
	
	In Section \ref{sec:HiraideGeneralization}, we examine some of the similarities between Anosov homeomorphisms and Anosov diffeomorphisms and give a proof Theorem \ref{thm:HiraideNilmanifolds}.
	
	Theorem \ref{thm:HiraideNilmanifolds} is useful in the proof of our main theorem because given a fibered partially hyperbolic diffeomorphism $f:M\to M$ with associated bundle $\pi:M\to B$, the map induced by $f$ on $B$ is an Anosov homeomorphism. (This follows from a result of Bohnet and Bonatti \cite{BohnetBonatti}, as we will explain in Section \ref{sect:proof of main thm}.)
	
	After proving Theorem \ref{thm:HiraideNilmanifolds} in Section \ref{sec:HiraideGeneralization}, we spend the rest of the paper proving Theorem \ref{thm:smoothmodel}. The proof is split into four parts. Due to the fact that much of the proof will take place in the quotient leaf space $B$, we denote the partially hyperbolic diffeomorphism on $M$ by $\hat f:M\to M$. We denote the homeomorphism that $\hat f$ descends to on $B$ by $f:B\to B$.
	
	The strategy of the proof is to first construct,
		\begin{itemize}
		\item 
			a conjugacy $h:B\to B$ between $f:B\to B$ and a hyperbolic nilmanifold automorphism $A:B\to B$, and 
		\item
			a smooth $F$-bundle $\widehat{M}$ over $B$ that is isomorphic to the original $F$-bundle $M$.
		\end{itemize}
	Then lift 
		\begin{itemize}
		\item 
			the conjugacy $h:B\to B$ to a homeomorphism $\hat h: \widehat M\to \widehat M$, and 
		\item 
			the hyperbolic nilmanifold automorphism $A:B\to B$ to a partially hyperbolic diffeomorphism $g:\widehat M \to \widehat M$.
		\end{itemize}
		
	The construction of the conjugacy $h:B\to B$ and the hyperbolic nilmanifold automorphism $A:B\to B$ relies almost entirely on Theorem \ref{thm:HiraideNilmanifolds}. The construction of the smooth bundle $\widehat M$ relies on tools developed in Section \ref{sect: fiber bundles}. Lifting the conjugacy $h:B\to B$ to a homomorphism $\hat h:\widehat M\to \widehat M$ takes place in Section \ref{sect: lifting conjugacy}, and finally lifting $A$ to a partially hyperbolic diffeomorphism $g$ relies on tools developed in Section \ref{sect: lifting Anosov automorphism}. The entire proof of Theorem \ref{thm:smoothmodel} is given in Section \ref{sect:proof of main thm}.
	
\section*{Acknowledgments}
	
	The author thanks Amie Wilkinson for helpful discussions and critical comments on drafts of the paper, as well as for her continued support. The author thanks Danny Calegari for helpful discussions on the topology and geometry of fiber bundles and on diffeomorphism groups. The author thanks Kathryn Mann for helpful discussions on diffeomorphism groups and on three-manifolds. The author thanks Rafael Potrie, Pablo Carrasco, Jon DeWitt, and Daniel Mitsutani for critical comments on earlier drafts of this paper. The author is also thankful to the anonymous referees for carefully reading the manuscript and for providing numerous helpful comments. 

\section{Preliminaries about Fiber Bundles}\label{sect: fiber bundles}

	The goal of this section is to provide several results about fiber bundles that are necessary for the proof of Theorem \ref{thm:smoothmodel}.

	The structure of a fiber bundle is given by its transition functions. Given a continuous fiber bundle $\pi:E\to B$ with fiber $F$ and structure group $G$, let $\left\{ (U_i,\phi_i:\pi^{-1}(U_i)\to U_i\times F) \right\}$ be locally trivializing charts for $E$. The functions $\tau_{ij}:=\phi_i\circ \phi_j^{-1}:U_i\cap U_j\to G$ are {\em{transition functions}} for the bundle $E$.
	
	All the data in a fiber bundle is contained in the transition functions. More precisely,
	
	\begin{lemma}[Fiber bundle construction theorem]\label{lem:fbconstruction}
		Let $F, B$ be $C^k$ manifolds ($0\leq k\leq \infty$), and let $G$ be a topological group with the structure of a $C^k$ manifold that has a $C^k$ left action on $F$. Given an open cover $\{U_i\}_{i\in A}$ of $B$ and a set of $C^k$ functions $\tau_{ij}:U_i\cap U_j\to G$ such that the cocycle condition, $\tau_{ij}(x)\tau_{jk}(x)=\tau_{ik}(x)$ holds for all $x\in U_i\cap U_j\cap U_k$. Then there exists a $C^k$ $F$-bundle $\pi:E\to B$ with transition functions $\tau_{ij}$.
	\end{lemma}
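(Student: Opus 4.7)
The plan is the standard gluing construction. I would first form the disjoint union $\widetilde E = \bigsqcup_{i \in A} U_i \times F$ and declare $(x, q) \in U_j \times F$ equivalent to $(x, \tau_{ij}(x) \cdot q) \in U_i \times F$ whenever $x \in U_i \cap U_j$. The cocycle condition, applied with $i = j = k$, forces $\tau_{ii}(x) = e$ (giving reflexivity); applied with $k = i$ it gives $\tau_{ji}(x) = \tau_{ij}(x)^{-1}$ (giving symmetry); and transitivity is the cocycle identity itself. Set $E = \widetilde E / \sim$ with the quotient topology and define $\pi : E \to B$ by $\pi([x, q]) = x$, which is well defined because the relation preserves the base coordinate.

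Next I would produce local trivializations and the $C^k$ structure. The composition $U_i \times F \hookrightarrow \widetilde E \twoheadrightarrow E$ is injective (since $\tau_{ii} = e$ means no two points within a single chart are identified) and its image is exactly $\pi^{-1}(U_i)$, so it has an inverse $\phi_i : \pi^{-1}(U_i) \to U_i \times F$. On the overlap, by construction,
\[
    \phi_i \circ \phi_j^{-1}(x, q) = (x, \tau_{ij}(x) \cdot q),
\]
which is $C^k$ because $\tau_{ij}$ is $C^k$ and $G$ acts $C^k$ on $F$. Declaring the $\phi_i^{-1}$ to be $C^k$ charts therefore equips $E$ with a $C^k$ atlas whose transitions between overlapping charts are $C^k$, making $\pi$ a $C^k$ submersion with $C^k$ local trivializations $\phi_i$ and transition functions exactly the prescribed $\tau_{ij}$.

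The main obstacle, more bookkeeping than mathematics, is verifying that $E$ is Hausdorff and second countable so that it qualifies as a $C^k$ manifold. Second countability reduces to passing to a countable refinement of $\{U_i\}$, available since $B$ is a manifold. For Hausdorffness, given distinct points $[x_1, q_1] \neq [x_2, q_2]$ in $E$: if $x_1 \neq x_2$, pull back disjoint neighborhoods from $B$ via $\pi$; if $x_1 = x_2 \in U_i$ for some $i$, separate them inside the chart $U_i \times F$ using that $U_i \times F$ is Hausdorff, and transport via $\phi_i^{-1}$. The supporting claim that each $\phi_i$ is a homeomorphism follows because the quotient map $\widetilde E \to E$ is open: the saturation of an open set $W \subset U_i \times F$ in each $U_j \times F$ is the image of $W \cap ((U_i \cap U_j) \times F)$ under the fiberwise homeomorphism $(x, q) \mapsto (x, \tau_{ji}(x) \cdot q)$, hence open. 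Once these routine topological checks are in place, the construction above delivers the desired $C^k$ $F$-bundle with prescribed transition functions.
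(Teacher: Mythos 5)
The paper does not prove this lemma, citing instead Husemoller's Section 5.3; your argument is precisely that standard gluing construction (disjoint union modulo the cocycle-induced equivalence, openness of the quotient map, Hausdorff/second-countable checks), so it is correct and essentially the same as the cited proof.
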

	
	This means that any set of functions $\{\tau_{ij}:U_i\cap U_j\to G\}$ that satisfy the cocycle condition, $\tau_{ij}(x)\tau_{jk}(x)=\tau_{ik}(x)$ holds for all $x\in U_i\cap U_j\cap U_k$, are transition functions for a fiber bundle. Additionally, the smoothness of a bundle is defined by the smoothness of the transition functions. This is a standard result about fiber bundles (e.g. see \cite[Section 5.3]{Husemoller}).
	
	Transition functions also classify fiber bundles up to isomorphism. Two fiber bundles are isomorphic if they have {\em{cohomologous}} transition functions. Two sets of transition functions (i.e. two sets of functions that satisfy the cocycle condition) $\{\tau_{ij}:U_i\cap U_j\to G\}$ and $\{ \tau'_{ij}:U_i\cap U_j\to G \}$ are said to be {\em{cohomologous}} if there exist continuous functions $t_i:U_i\to G$ such that $\tau'_{ij}(x)=t_i(x)^{-1}\tau_{ij}(x)t_j(x)$ for all $x\in U_i\cap U_j$.
	
	\begin{corollary}\label{cor:transitionfcns}
		Let $F,B$ be topological spaces and $G$ a topological group that has a continuous left action on $F$. Suppose $\pi:E\to B$ and $\tilde \pi:\tilde E\to B$ are continuous $F$-bundles with structure group $G$. Let $\{(U_i,{\phi_i}:\pi^{-1}(U_i)\to U_i\times F)\}$ and $\{(U_i,\widetilde{\phi_i}:\widetilde\pi^{-1}(U_i)\to U_i\times F)\}$ be locally trivializing charts for $E$ and $\tilde E$ respectively, and let $\{\tau_{ij}:U_i\cap U_j\to G\}$ and $\{\widetilde\tau_{ij}:U_i\cap U_j\to G\}$ be the associated transition functions for $\{(U_i,\phi_i)\}$ and $\{(U_i,\tilde\phi_i)\}$ respectively. Suppose there exist continuous functions $t_i:U_i\to G$ such that $\tilde \tau_{ij}(x)=t_i(x)^{-1}\tau_{ij}(x)t_j(x)$ for all $x\in U_i\cap U_j$. Then, $E$ and $\tilde E$ are isomorphic as $F$-bundles with structure group $G$. If, in addition, the left $G$ action on $F$ is faithful, then the converse holds.
	\end{corollary}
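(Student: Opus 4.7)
The plan is to treat the two directions separately. For the forward direction, given the cohomology data $\{t_i\}$, I would build a bundle isomorphism $\Phi\colon E\to \tilde E$ chart by chart and verify that the local pieces glue on overlaps. For the converse, I would start from a given isomorphism $\Phi$, read off its expression in each pair of trivializations to extract a $G$-valued function on each $U_i$, and then use faithfulness of the action to translate an identity of group actions on $F$ into an identity in $G$ which is precisely the cohomology relation.

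More concretely, on each $\pi^{-1}(U_i)$ I would define $\Phi_i(p)=\tilde\phi_i^{-1}(x,\,t_i(x)^{-1}\cdot f)$ where $(x,f)=\phi_i(p)$. Each $\Phi_i$ is manifestly a fiber-preserving homeomorphism onto $\tilde\pi^{-1}(U_i)$. The main overlap check is that $\Phi_i=\Phi_j$ on $\pi^{-1}(U_i\cap U_j)$: reading $\Phi_j(p)$ in the $i$-th chart on the $\tilde E$-side produces the factor $\tilde\tau_{ij}(x)\,t_j(x)^{-1}$ acting on the appropriate fiber coordinate, while $\Phi_i(p)$ produces $t_i(x)^{-1}\tau_{ij}(x)$, and the cohomology hypothesis $\tilde\tau_{ij}(x)=t_i(x)^{-1}\tau_{ij}(x)t_j(x)$ is exactly the identity that makes these agree. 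The $\Phi_i$ therefore glue to a global continuous fiber-preserving bijection $\Phi$, whose inverse is built by the same recipe with $t_i$ replaced by $t_i^{-1}$; and $\Phi$ is a $G$-bundle isomorphism because only $G$-valued factors appear in its definition relative to the given trivializations.

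For the converse, given an isomorphism $\Phi\colon E\to \tilde E$, the composition $\tilde\phi_i\circ \Phi\circ \phi_i^{-1}\colon U_i\times F\to U_i\times F$ is fiber-preserving and takes values in the $G$-action, so it has the form $(x,f)\mapsto (x,\,s_i(x)\cdot f)$; faithfulness of the $G$-action makes the element $s_i(x)\in G$ uniquely defined, and continuity of $s_i$ follows from continuity of $\Phi$ together with the local product structures. Comparing the two expressions over $U_i\cap U_j$ using $\tau_{ij}$ on the $E$-side and $\tilde\tau_{ij}$ on the $\tilde E$-side yields the equality of actions $s_i(x)\tau_{ij}(x)\cdot f=\tilde\tau_{ij}(x)s_j(x)\cdot f$ for every $f\in F$; faithfulness promotes this to an equality in $G$, and setting $t_i:=s_i^{-1}$ produces the cohomology relation. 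The only real obstacle is bookkeeping: one must be scrupulous about the convention $\tau_{ij}=\phi_i\circ \phi_j^{-1}$ so that the $G$-factors land in the correct order on overlaps, and faithfulness must be invoked precisely at the step where an equality of actions on $F$ is lifted to an equality of elements of $G$.
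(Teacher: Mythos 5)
Your proof is correct, and it is the standard argument (essentially the one in Husemoller, which is all the paper gives, via citation). The local definition $\Phi_i(p)=\tilde\phi_i^{-1}(x,\,t_i(x)^{-1}\cdot f)$ with $(x,f)=\phi_i(p)$ does glue exactly because $t_i^{-1}\tau_{ij}=\tilde\tau_{ij}t_j^{-1}$ is a restatement of the cohomology hypothesis, and for the converse the only place faithfulness is genuinely needed is, as you say, to promote the pointwise identity $s_i(x)\tau_{ij}(x)\cdot f=\tilde\tau_{ij}(x)s_j(x)\cdot f$ for all $f\in F$ to the group identity $s_i(x)\tau_{ij}(x)=\tilde\tau_{ij}(x)s_j(x)$, after which $t_i:=s_i^{-1}$ gives the relation. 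One small point worth making explicit: in the converse direction, the existence and continuity of the $s_i$ is part of what ``isomorphism of $F$-bundles with structure group $G$'' means (the isomorphism is required to be $G$-valued in local trivializations), so this step does not itself rely on faithfulness; faithfulness only enters where you cancel the action.
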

	
	For a proof of Corollary \ref{cor:transitionfcns}, see \cite[Chapter 5.2]{Husemoller}. 
	
	In the proof of Theorem \ref{thm:smoothmodel}, we change the structure group of the fiber bundles we are working with. We now make the notion of changing structure group precise. 
	
	Given a continuous homomorphism $\alpha:H\to G$ between two topological groups and a principal $H$-bundle $q:Q\to B$, we can construct a principal $G$-bundle from $Q$ and $\alpha$ in the following way. Consider the space
		$$
				Q\times_{\alpha,H}G
			:= 
				Q\times G/(x,g)\sim \{(x\cdot h,\alpha(h)g), h\in H\}
		$$
	Note that $Q\times_{\alpha,H}G$ has a free right $G$-action given by $[x,g]\cdot g'=[x,gg']$. This makes $Q\times_{\alpha,H}G$ a principal $G$-bundle. Note that the projection map for this bundle $Q\times_{\alpha,H}G\mapsto B$ is given by $[x,g]\mapsto q(x)$. 
	
	We say that the principal $H$-bundle $q:Q\to B$ induces the principal $G$-bundle $p:P\to B$ if $P\cong Q\times_{\alpha,H} G$. Additionally, we say that a principal $G$-bundle $p:P\to B$ admits \textit{a reduction of structure group from $G$ to $H$} if there exists a principal $H$-bundle $q:Q\to B$ that induces $P$.
	
	Now, we note the relationship between the transition functions between a bundle and a bundle it induces. Suppose that $t_{ij}:U_i\cap U_j\to H$ are transition functions for the principal $H$-bundle $q:Q\to B$. The transition functions for the induced bundle $Q\times_{\alpha,H}G$ are given by $\alpha\circ t_{ij}:U_i\cap U_j\to G$. 
	
	Next, we give conditions under which a principal bundle admits a reduction of structure group.
	
	\begin{lemma}\label{lem:reduction-structure-group}
		Suppose that $\alpha:H\to G$ is a homomorphism that is also a homotopy equivalence. Then any principal $G$-bundle admits a reduction of structure group from $G$ to $H$.
	\end{lemma}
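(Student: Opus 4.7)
The plan is to argue via classifying space theory. Principal $G$-bundles over a paracompact base $B$ are classified up to isomorphism by the set $[B,BG]$ of homotopy classes of maps into a classifying space $BG$ for $G$, with the correspondence given by pulling back a universal principal $G$-bundle $EG\to BG$. Since any continuous homomorphism $\alpha:H\to G$ is functorial with respect to the Milnor join construction (or the bar construction) of classifying spaces, it induces a continuous map $B\alpha:BH\to BG$ between classifying spaces, and the $G$-bundle classified by $B\alpha\circ \psi$ is precisely the bundle $Q\times_{\alpha,H}G$ induced from the $H$-bundle classified by $\psi$.

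The first key step is to show that $B\alpha:BH\to BG$ is a homotopy equivalence whenever $\alpha:H\to G$ is. This is where one uses that $B$ is a homotopy functor on the category of (sufficiently nice) topological groups; concretely, one can build compatible Milnor models so that $EH\to EG$ is a weak equivalence between contractible spaces with free actions, then pass to quotients, with Whitehead's theorem promoting weak equivalence to homotopy equivalence under the standing assumption that $H$ and $G$ have the homotopy type of CW complexes. For the applications in this paper, $H$ and $G$ are Lie-type subgroups of $\Diff^1(F)$, which indeed have the homotopy type of CW complexes.

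Given this, the conclusion follows in a single line. Let $p:P\to B$ be an arbitrary principal $G$-bundle, classified by some map $f:B\to BG$. Because $B\alpha$ is a homotopy equivalence, it has a homotopy inverse $\beta:BG\to BH$; setting $\tilde f:=\beta\circ f$ yields a classifying map $\tilde f:B\to BH$ with $B\alpha\circ \tilde f\simeq f$. The principal $H$-bundle $q:Q\to B$ classified by $\tilde f$ therefore satisfies $Q\times_{\alpha,H}G\cong P$, which is the desired reduction of structure group.

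The main obstacle is the technical hypothesis required to deduce that $B\alpha$ is a homotopy equivalence from the hypothesis that $\alpha$ is; the standard classifying space machinery produces a weak equivalence for free, but to upgrade this to an honest homotopy equivalence one needs the classifying spaces to be CW (or at least of CW homotopy type). One should either cite a reference (such as Milnor's construction or tom Dieck's treatment of classifying spaces of topological groups) that handles this point, or record the hypothesis that $H$ and $G$ are of the homotopy type of CW complexes, which is harmless in the setting of this paper. The remainder of the argument, namely identifying the pullback of the universal $G$-bundle along $B\alpha\circ\tilde f$ with the induced bundle $Q\times_{\alpha,H}G$, follows from the naturality of the construction $(-)\times_{\alpha,H}G$ under pullback and is routine.
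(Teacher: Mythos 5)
Your proposal is correct and is essentially the classifying-space argument that the paper implicitly appeals to by citing Husemoller, Chapter~6: the paper itself gives no details, so your write-up simply supplies the standard argument behind that citation. One small remark: the worry about upgrading a weak equivalence $B\alpha$ to an honest homotopy equivalence can be sidestepped here, since the base $B$ is a manifold (hence of CW homotopy type), and for $X$ of CW type a weak equivalence $Y\to Z$ already induces a bijection $[X,Y]\to[X,Z]$; so one only needs $B\alpha$ to be a weak equivalence, which follows directly from the long exact homotopy sequences of $H\to EH\to BH$ and $G\to EG\to BG$ together with the assumption that $\alpha_*:\pi_*(H)\to\pi_*(G)$ is an isomorphism.
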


	The proof of this lemma relies on the theory of classifying spaces. Let $G$ be a topological group. A principal $G$-bundle $\pi:EG\to BG$ is a {\em{universal}} principal $G$-bundle if for all CW-complexes $X$, the map from the set of homotopy classes of maps $X\to BG$ to the set of isomorphism classes of principal $G$-bundles over $X$, given by the map $f\mapsto f^*EG$ is a bijection. The base space of a universal principal $G$-bundle is known as a {\em{classifying space}} for $G$. 
	
	\begin{proof}
		See \cite[Chapter 6]{Husemoller}.
	\end{proof}
	
	\begin{remark}
		Note that if $H\subset G$ and the homomorphism $\alpha:H\to G$ is inclusion, then the reduction of structure group from $G$ to $H$ from Lemma \ref{lem:reduction-structure-group} has transition functions that are cohomologous to the transition functions of the original principal $G$ bundle.
	\end{remark}

%	\begin{lemma}\label{lem:reduction-structure-grp-inclusion}
%		Let $\pi_0:E_0\to B$ be a $C^k$ $F$-bundle ($1\leq k\leq \infty$) with structure group, where $G$ has the structure of a $C^k$-manifold and the left action of $G$ on $F$ is $C^k$. Suppose there exists a subgroup $H$ of $G$ such that the inclusion $\iota:H \hookrightarrow G$ is a homotopy equivalence. Then there exists a $C^k$ $F$-bundle $\pi_1:E_1\to B$ with structure group $H$ that is isomorphic to $G$.
%	\end{lemma}
%	\begin{proof}
%		B
%	\end{proof}

	\begin{prop}\label{prop:InclusionDiffeoGrps}
		If $M$ is a closed, smooth manifold then the inclusion $\Diff^\infty(M)\hookrightarrow \Diff^1(M)$ is a homotopy equivalence.
	\end{prop}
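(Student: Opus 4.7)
The plan is to construct a smoothing operator via mollification and show it serves as a homotopy inverse to the inclusion. Fix a smooth embedding $M\hookrightarrow \R^N$ with a smooth tubular neighborhood retraction $\pi\colon U\to M$. Fix a finite cover of $M$ by coordinate charts with a subordinate smooth partition of unity $\{\rho_i\}$. For each $t>0$, define a smoothing operator $S_t\colon C^1(M,\R^N)\to C^\infty(M,\R^N)$ by convolving $f$, in each chart, with a standard mollifier of width $t$, and gluing via the $\rho_i$. Composing with $\pi$, set $\sigma_t(f):=\pi\circ S_t(f)\in C^\infty(M,M)$, defined whenever the image of $S_t(f)$ lies in $U$.

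Standard mollifier estimates give $\sigma_t(f)\to f$ in the $C^1$ topology as $t\to 0^+$, and if $f$ is already smooth then $\sigma_t(f)\to f$ in $C^\infty$. The convergence is uniform in $f$ over any $C^1$-compact family $K\subset \Diff^1(M)$, because such a family is equibounded in $C^1$ and has equicontinuous derivatives by Arzelà–Ascoli. Since $\Diff^1(M)$ is open in $C^1(M,M)$, there exists a uniform $t_0=t_0(K)>0$ such that $\sigma_t(f)\in \Diff^\infty(M)$ for every $f\in K$ and every $t\in(0,t_0]$, and the assignment $f\mapsto \sigma_t(f)$ is continuous in both variables.

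To conclude, I would first establish that $i\colon \Diff^\infty(M)\hookrightarrow \Diff^1(M)$ is a weak homotopy equivalence. Given a continuous $\alpha\colon S^k\to \Diff^1(M)$, the image is compact, so $(s,x)\mapsto \sigma_{s t_0}(\alpha(x))$, extended continuously to $s=0$ by $\alpha$, is a homotopy within $\Diff^1(M)$ from $\alpha$ to a map valued in $\Diff^\infty(M)$. The analogous construction applied to homotopies $H\colon S^k\times[0,1]\to\Diff^1(M)$ with smooth endpoints (taking a relative mollifier that is the identity on the endpoints, e.g.\ by tapering $t$ to $0$ at $s=0,1$) shows that such $H$ can be deformed rel $S^k\times\{0,1\}$ into $\Diff^\infty(M)$. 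Hence $i$ induces isomorphisms on all homotopy groups. Finally, both $\Diff^\infty(M)$ and $\Diff^1(M)$ are metrizable ANR topological groups and therefore have the homotopy type of CW complexes by Milnor's theorem; a weak equivalence between CW-type spaces is a genuine homotopy equivalence.

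The main obstacle is the uniform quantitative estimate at the gluing step: one must show that $\|\sigma_t(f)-f\|_{C^1}\to 0$ as $t\to 0^+$, uniformly for $f$ ranging over a $C^1$-compact family, even though the construction uses a fixed finite partition of unity and convolutions performed in different charts. This needs a careful choice of mollifier scale relative to the chart diameters and the standard estimate bounding the commutator of convolution with a $C^\infty$ change of coordinates by a quantity that is $O(t)$ on $C^1$-bounded sets.
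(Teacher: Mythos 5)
Your argument follows the same overall plan as the paper's --- construct a continuous smoothing operator into $\Diff^\infty(M)$, deduce that the inclusion is a weak homotopy equivalence, and upgrade to a genuine equivalence by establishing that both diffeomorphism groups have CW homotopy type --- but the auxiliary steps are justified differently, and on one point your version is the more careful of the two. The paper obtains its continuous smoothing map $\Phi$ by citing Whitney approximation and asserting that ``the choice of this map depends continuously on our original map''; the cited theorem, read literally, does not supply continuous dependence on the input, and your explicit mollification-in-charts construction is exactly what is needed to furnish it, so your route is more self-contained there. For the CW-type step, the paper identifies $\Diff^k(M)$ with $\ell^2$ via its Fr\'echet-manifold structure and the Anderson--Bing theorem, then cites the CW type of $\ell^2$, whereas you invoke Milnor's theorem that metrizable ANRs have CW homotopy type directly; both are standard and equivalent for this purpose. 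One caution on your injectivity step: tapering the mollification scale $t$ to $0$ at $s=0,1$ does make $\sigma_{t(s)}\bigl(H(\cdot,s)\bigr)$ land in $\Diff^\infty(M)$ for every $s$, but continuity of $s\mapsto\sigma_{t(s)}\bigl(H(\cdot,s)\bigr)$ in the $C^\infty$ topology at $s=0,1$ is not automatic, since for $s$ near an endpoint you are mollifying at a short scale a map that is only $C^1$-close to the smooth endpoint, and the $C^m$-estimates for a mollifier deteriorate like negative powers of $t$; to close this you either need to choose the taper $t(s)\to 0$ slowly relative to $\sup_x\|H(x,s)-H(x,0)\|_{C^1}$, or, more simply, keep $t$ fixed and small throughout the interior of $[0,1]$ and then concatenate with the $C^\infty$-continuous paths $r\mapsto\sigma_{rt}$ (which converge in $C^\infty$ since the endpoints are already smooth) to reconnect to the original endpoints.
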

	\begin{proof}
		For $0\leq k\leq \infty$, $\Diff^k(M)$ is an infinite-dimensional separable Fr\'echet space \cite[Section I.4.3]{Onishchik}. Since all infinite-dimensional separable Fr\'echet spaces are homeomorphic to the Hilbert space $\ell^2$ \cite{AndersonBing}, we see that $\Diff^k(M)$ is homeomorphic to the Hilbert space $\ell^2$. Since $\ell^2$ has the homotopy type of a CW-complex \cite{SmrekarYamashita}, we get that $\Diff^k(M)$ is homotopy equivalent to a CW-complex \cite{HatcherDiffeoGrps}. Thus by Whitehead's Theorem, to show that the inclusion $\iota:\Diff^\infty (M)\hookrightarrow \Diff^1(M)$ is a homotopy equivalence, it is sufficient to show that the induced map on homotopy groups $\iota_*:\pi_*(\Diff^\infty(M))\to \pi_*(\Diff^1(M))$ is an isomorphism. 
		
		To do this, we first recall that for any map in $\varphi\in C^1(M,M)$, we can find a smooth map that is arbitrarily close to and homotopic to $\varphi$ and that the choice of this map depends continuously on our original map (\cite[Theorems 6.21 and 6.28]{Lee}). This gives us a continuous map $\Phi:\Diff^1(M)\to C^\infty(M,M)$ such that for any $\varphi\in \Diff^1(M)$, $\Phi(\varphi)\simeq \varphi$ and $d(\varphi,\Phi(\varphi))<\varepsilon$. Since $\Diff^\infty(M)\subset C^\infty(M,M)$ is open (by the inverse function theorem), by choosing $\varepsilon$ small enough, we get that $\Phi(\Diff^1(M))\subset \Diff^\infty(M)$, so we can write $\Phi:\Diff^1(M)\to \Diff^\infty(M)$.
		
		Now, we show that the map induced by $\iota:\Diff^\infty(M)\to \Diff^1(M)$ on homotopy groups is an isomorphism. The map $\iota_*$ is surjective because any map $\phi:S^n\to \Diff^1(M)$ is homotopic to the map  $\Phi\circ \phi:S^n\to \Diff^\infty(M)$. To see that $\iota_*$ is injective, we take a map $\phi:S^n\to \Diff^1(M)$ that is null-homotopic. Let $h_t:S^n\to \Diff^1(M)$ be a null-homotopy for $\phi$. Then the map $\Phi\circ\phi:S^n\to \Diff^\infty(M)$ is homotopic to $\phi$ in $\Diff^1(M)$, and is null-homotopic in $\pi_n(M)$ via the homotopy $\Phi\circ h_t:S^n\to \Diff^\infty(M)$.
	\end{proof}

\section{Lifting the conjugacy on the leaf space}\label{sect: lifting conjugacy}

	\begin{lemma}\label{lem:liftingconjugacy}
		Let $F,M,\tilde M,B$ be closed Riemannian manifolds, and let $\pi:M\to B$ and $\widetilde{\pi}:\tilde M\to B$ be continuous isomorphic $F$-bundles. Let $h:B\to B$ be a homeomorphism that is homotopic to the identity. Then, $h:B\to B$ lifts to a homeomorphism $\tilde h:M\to \tilde M$.
	\end{lemma}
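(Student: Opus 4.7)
My plan is to reduce the lemma to the homotopy-invariance of pullback bundles, and then exploit the canonical projection from a pullback bundle back to the original. Since $M$ and $\tilde M$ are isomorphic as $F$-bundles over $B$, I fix once and for all an $F$-bundle isomorphism $\Psi\colon M \to \tilde M$ with $\tilde\pi\circ\Psi = \pi$. Constructing the desired lift $\tilde h\colon M\to \tilde M$ then reduces to constructing a homeomorphism $\alpha\colon \tilde M\to \tilde M$ covering $h$ (i.e.\ $\tilde\pi\circ\alpha = h\circ\tilde\pi$), after which we may set $\tilde h := \alpha\circ\Psi$.

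To build $\alpha$, I would consider the pullback bundle
\[
    h^*\tilde M \;=\; \{(b,m)\in B\times \tilde M \;:\; h(b)=\tilde\pi(m)\},
\]
equipped with its two natural maps, the bundle projection $\pi_B(b,m)=b$ and the second-factor projection $q(b,m)=m$. By construction $\tilde\pi\circ q = h\circ \pi_B$, so $q$ covers $h$; and because $h$ is a homeomorphism, $q$ is also a homeomorphism, with inverse $m\mapsto\bigl(h^{-1}(\tilde\pi(m)),m\bigr)$. Because $B$ is a closed manifold it is paracompact, so every $F$-bundle over $B$ is numerable, and the standard covering-homotopy theorem for fiber bundles (together with $h\simeq \id_B$) produces an $F$-bundle isomorphism $\Phi\colon \tilde M \to h^*\tilde M$ covering $\id_B$, i.e.\ $\pi_B\circ\Phi = \tilde\pi$. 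Setting $\alpha := q\circ\Phi$ and $\tilde h := q\circ\Phi\circ\Psi$, the computation
\[
    \tilde\pi\circ \tilde h \;=\; \tilde\pi\circ q\circ\Phi\circ\Psi \;=\; h\circ\pi_B\circ\Phi\circ\Psi \;=\; h\circ\tilde\pi\circ\Psi \;=\; h\circ \pi
\]
shows that $\tilde h$ lifts $h$, and $\tilde h$ is a homeomorphism as a composition of three.

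The main obstacle is justifying the appeal to homotopy-invariance of pullback bundles in the setting of fiber bundles with structure group contained in $\Diff^1(F)$. The clean way to handle this is to pass first to the associated principal bundle (with structure group $G$), apply the covering-homotopy theorem for numerable principal $G$-bundles over the paracompact base $B\times[0,1]$ obtained from the homotopy $h\simeq\id_B$, and then reassociate with fiber $F$. Because the lemma only requires $\tilde h$ to be a homeomorphism of total spaces rather than a morphism of bundles with a prescribed structure group, no further compatibility with the structure group is needed in the final composition, and the argument goes through.
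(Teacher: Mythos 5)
Your proposal is correct and takes essentially the same route as the paper: factor the lift through the pullback bundle $h^*\tilde M$, use $h\simeq\id_B$ to identify $h^*\tilde M$ with $\tilde M$ (equivalently, with $M$), and take the second-coordinate projection as the map covering $h$. The only cosmetic differences are that you split the bundle isomorphism $M\to h^*\tilde M$ into the composition $\Phi\circ\Psi$, and you justify that the result is a homeomorphism by exhibiting the explicit inverse of the second-factor projection rather than invoking invariance of domain as the paper does.
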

	
	\begin{proof}
		First, we note that $M$ and $h^*M$ are isomorphic bundles. This is because we by assumption $M$ and $\tilde M$ are isomorphic, and since $h\sim \id$, the bundle $h^* \tilde M$ is isomorphic to the bundle $\id^*\tilde M=\tilde M$. Let $\phi:\tilde M\to h^*\tilde M$ be a bundle isomorphism, i.e. the diagram in Figure \ref{fig:lifth1} commutes. From the definition of $h^*\tilde M$, we have that the commutative diagram in Figure \ref{fig:lifth2} commutes.\footnote{Note that $\proj_2:h^* \tilde M \to \tilde M$ is the projection onto the second coordinate from the definition of the pullback bundle $h^* \tilde M = \{ (b,x)\in B\times M:  h(b)=\tilde\pi (x) \}$.} Combining these diagrams gives us the commutative diagram from Figure \ref{fig:lifth3}. So the continuous map
			\begin{equation}\label{eqn:lifth}
				\tilde h:=\proj_2\circ \phi:M\to \tilde M
			\end{equation}
		is a lift of $h$. Since $\tilde h$ is a continuous injection, by invariance of domain, $\tilde h:M\to \tilde M$ is a homeomorphism. 
		
		\begin{figure*}[h!]
			\centering
			\begin{subfigure}{.31\textwidth}
				\begin{tikzcd}
					M 
						\arrow[rr,"\phi"]
						\arrow[rd,"\pi"] 
					& 
					& h^*\tilde M 
						\arrow[ld,"h^*\widetilde{\pi}"]\\
					& B &
				\end{tikzcd}
				\caption{}
				\label{fig:lifth1}
			\end{subfigure}
			\quad
			\begin{subfigure}{.31\textwidth}
				\begin{tikzcd}
					h^*\tilde M \arrow[r,"\proj_2"] \arrow[d,"h^*\widetilde{\pi}"]
					& \tilde M \arrow[d,"\widetilde{\pi}"]\\
					B \arrow[r,"h"]
					&B
				\end{tikzcd}
				\caption{}
				\label{fig:lifth2}
			\end{subfigure}
			\quad 
			\begin{subfigure}{.31\textwidth}
				\begin{tikzcd}
					M 	
						\arrow[r,"\phi"]
						\arrow[rd,"\pi"]
					& h^*\tilde M
						\arrow[r, "\proj_2"]
						\arrow[d,"h^*\widetilde{\pi}"]
					&\tilde M
						\arrow[d, "\widetilde{\pi}"]\\
					& B
						\arrow[r,"h"]
					&B
				\end{tikzcd}
				\caption{}
				\label{fig:lifth3}
			\end{subfigure}
			\caption{}
			\label{fig:lifth}
		\end{figure*}
		
	\end{proof}

\section{Lifting the Anosov automorphism on the leaf space to a partially hyperbolic system}\label{sect: lifting Anosov automorphism}

	\begin{lemma}\label{lem:liftingA}
		Let $F,E_0,E_1, B$ be closed Riemannian manifolds. Assume that $p_0:E_0\to B$ and $p_1:E_1\to B$ are $C^k$ $F$-bundles with structure group $H$, where $H$ is a finite-dimensional Lie group with smooth universal bundle, and that the left action of $H$ on $F$ is $C^k$, and that $H$ acts on $F$ by isometries. Suppose that $\theta:E_0\to E_1$ is a (continuous) isomorphism of $E_0$ and $E_1$ as $F$-bundles with structure group $H$ over $B$. Then, there is a $C^k$ isomorphism $\alpha:E_0\to E_1$ that is an isometry on fibers.
	\end{lemma}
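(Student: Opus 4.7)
The plan is to reduce the lemma to a smoothing problem for sections of a principal $H$-bundle. Form the bundle $\mathrm{Iso}(E_0,E_1)\to B$ whose fiber over $b\in B$ consists of $H$-equivariant isomorphisms from $(E_0)_b$ to $(E_1)_b$. Via local trivializations of $E_0$ and $E_1$, this is naturally a $C^k$ principal $H$-bundle (with $H$ acting on itself by left multiplication), and continuous (respectively $C^k$) sections of $\mathrm{Iso}(E_0,E_1)$ correspond bijectively to continuous (respectively $C^k$) $H$-bundle isomorphisms $E_0\to E_1$. The given $\theta$ supplies a continuous section, so in particular $\mathrm{Iso}(E_0,E_1)$ is continuously trivial.

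Next I would upgrade this continuous section to a $C^k$ section. Cover $B$ by finitely many open sets $U_i$ over which $\mathrm{Iso}(E_0,E_1)$ is $C^k$-trivializable, so that the continuous section is represented locally by continuous maps $\phi_i:U_i\to H$. Since $H$ is a finite-dimensional Lie group, each $\phi_i$ can be approximated in $C^0$ by a $C^k$ map using convolution in local coordinates on $H$ via the exponential map, which is a smooth local diffeomorphism near the identity. The local smoothings are then patched into a global $C^k$ section using a smooth partition of unity subordinate to $\{U_i\}$, with the exponential map of $H$ providing consistent interpolation across overlaps. The smooth universal bundle hypothesis ensures that the classifying map $B\to BH$ can be taken smooth, so no topological obstruction to $C^k$-triviality arises beyond the already-established continuous triviality.

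The resulting $C^k$ section corresponds to a $C^k$ $H$-bundle isomorphism $\alpha:E_0\to E_1$. For the isometry property, note that because $H$ acts on $F$ by isometries, the chosen Riemannian metric on $F$ descends to a well-defined fiberwise Riemannian metric on both $E_0$ and $E_1$: the pullback along any local trivialization is independent of the chosen trivialization since the transition functions lie in $H$ and hence preserve the metric. Any $H$-bundle isomorphism acts on fibers by elements of $H$ in local trivializations, hence by isometries of $F$, so $\alpha$ is an isometry on every fiber.

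The main obstacle is the smoothing step: one cannot naively smooth chart by chart, since consistency under the $H$-valued transition functions might be lost in the process. The exponential map and finite-dimensionality of $H$, combined with the smooth universal bundle hypothesis, are exactly what is needed to carry out the smoothing globally in a way compatible with the principal bundle structure. The remaining verifications (that $\mathrm{Iso}(E_0,E_1)$ really is a $C^k$ principal $H$-bundle, that sections correspond to isomorphisms, and that $H$-equivariance forces fiberwise isometry) are essentially bookkeeping via the transition function formalism of Section~\ref{sect: fiber bundles}.
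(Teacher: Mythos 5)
Your proof is correct in outline but takes a genuinely different route from the paper's. The paper passes to associated principal $H$-bundles $Q_0, Q_1$ with classifying maps $f_0,f_1\colon B\to BH$, notes these are $C^k$ because the bundles are, uses the continuous isomorphism to get a continuous homotopy $f_0\simeq f_1$, applies Whitney approximation rel endpoints to upgrade it to a $C^k$ homotopy, and concludes that $Q_0$ and $Q_1$ (hence $E_0$ and $E_1$) are $C^k$-isomorphic. You instead form the transporter bundle $\mathrm{Iso}(E_0,E_1)\to B$, observe that $\theta$ gives a continuous section of this $C^k$ principal $H$-bundle, and smooth the section directly. Both arguments ultimately rest on a ``continuous implies $C^k$'' upgrade, but they locate it in different places: the paper smooths a homotopy of maps into the classifying space, while you smooth a section of a finite-dimensional $C^k$ fiber bundle over $B$. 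Your version is more hands-on and arguably more elementary in that it stays entirely within finite-dimensional differential topology; the key step you need is the standard theorem that continuous sections of a $C^k$ fiber bundle over a compact manifold can be $C^0$-approximated by $C^k$ sections (e.g.\ Hirsch, \emph{Differential Topology}, or Steenrod \S 6.7), which you should cite rather than re-derive via your convolution-and-partition-of-unity sketch, since the patching step (your exponential-chart averaging) requires all local smoothings to land in a single exponential chart and is exactly the delicate part that the standard theorem handles. Your final sentence invoking the smooth universal bundle to rule out an ``obstruction to $C^k$-triviality'' is slightly off: continuous triviality of $\mathrm{Iso}(E_0,E_1)$ is already established by $\theta$, and passing from continuous to $C^k$ triviality \emph{is} the section-smoothing problem, so nothing new is obtained from $BH$ there; in fact your approach does not appear to need the smooth universal bundle hypothesis at all, only finite-dimensionality of $H$, which is mildly more economical than the paper's argument. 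The fiberwise isometry argument at the end matches the paper's.
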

		
	\begin{proof}
		We begin by constructing principal $H$-bundles with the same transition data as $E_0$ and $E_1$ using Lemma \ref{lem:fbconstruction}. We'll call these $q_0:Q_0\to B$ and $q_1:Q_1\to B$. Let $f_0:B\to BH$ and $f_1:B\to BH$ be classifying maps for $Q_0$ and $Q_1$. Note that since $E_0$ and $E_1$ (and therefore $Q_0$ and $Q_1$) are $C^k$ bundles, the maps $f_0$ and $f_1$ are $C^k$. 
		
		Since $E_0$ and $E_1$ are isomorphic as continuous bundles, we get that there is a homotopy $f:B\times [0,1]\to BH$ from $f_0$ to $f_1$. Since $f|_{B\times \{0,1\}}$ is $C^k$, then by the Whitney Approximation Theorem (See \cite{Lee} Theorem 6.26), $f_t$ is homotopic to a $C^k$ map $\overline{f}:B\times [0,1]\to BH$ relative to $B\times \{0,1\}$. So, we have a $C^k$ homotopy $\overline{f}:B\times [0,1]\to BH$ from $\overline{f_0}=f_0$ to $\overline{f_1}=f_1$. If the classifying maps of two $C^k$ principal bundles bundles are homotopic via a $C^k$ homotopy, then the bundles are isomorphic as $C^k$ bundles \cite[Chapter 4.9]{Husemoller}\footnote{The argument in \cite[Chapter 4.9]{Husemoller} is only given for continuous bundles, but works for $C^k$ bundles}. Thus, we get that the pullback bundles $f_0^*EH$ and $f_1^*EH$ are isomorphic as $C^k$ principal $H$-bundles over $B$. Since $f_0$ and $f_1$ are the classifying maps for $Q_0$ and $Q_1$ respectively, this means that $Q_0$ and $Q_1$ are isomorphic as $C^k$ principal $H$-bundles over $B$. Since $Q_0$ and $Q_1$ have the same transition functions as $E_0$ and $E_1$, we get that $E_0$ and $E_1$ are isomorphic as $C^k$ bundles with structure group $H$.

		From the definition of a $C^k$ isomorphism of $F$-bundles with structure group $H$, we see that this means that there is a $C^k$ isomorphism $\alpha:E_0\to E_1$, such that, if ${(U_{0,i}, \phi_{0,i})}$ and $\{(U_{1,j},\phi_{1,j})\}$ are trivializing atlases for $E_0$ and $E_1$ respectively, then there exists functions $d_{ij}:U_{0,i}\cap V_{1,j}\to H$ such that for $x\in U_{0,i}\cap U_{1,j}$ and $y\in F$, we get that $\phi_{1,j}\circ \alpha\circ \phi_{0,i}^{-1}(x,y)=(x,d_{ij}(x)\cdot y)$. Since $H$ acts on $F$ by isometries, we get that $\alpha$ is an isometry on fibers.
	\end{proof}
	
	\begin{corollary}\label{cor:liftingA}
		Let $F,M, B$ be closed Riemannian manifolds. Assume that $\pi:M\to B$ is a smooth $F$-bundle with structure group $H$, where $H$ is a finite-dimensional Lie group with smooth universal bundle, and that the left action of $H$ on $F$ is smooth, and that $H$ acts on $F$ by isometries. Suppose $A:B\to B$ is a smooth Anosov diffeomorphism, and suppose that $A$ lifts to a homeomorphism $\widehat{A}:M\to M$. Then $A$ lifts to a $C^\infty$ diffeomorphism $g:M\to M$ that is an isometry on fibers of $\pi:M\to M$.
	\end{corollary}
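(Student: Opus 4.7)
The plan is to translate the existence of a continuous lift of $A$ into the existence of a continuous $F$-bundle isomorphism between $M$ and the pullback bundle $A^*M$, and then invoke Lemma \ref{lem:liftingA} to upgrade this isomorphism to a smooth one that is an isometry on fibers.

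Let $A^*\pi:A^*M\to B$ denote the pullback of $\pi:M\to B$ along $A$. Since $A\in C^\infty(B,B)$ is a diffeomorphism and $\pi:M\to B$ is a smooth $F$-bundle with structure group $H$ (with smooth transition functions $\tau_{ij}$), the bundle $A^*M$ is itself a smooth $F$-bundle over $B$ with structure group $H$, whose transition functions are $\tau_{ij}\circ A$. The continuous lift $\widehat{A}:M\to M$ of $A$ is equivalent to a continuous isomorphism
\[
\theta:M\to A^*M,\qquad \theta(m):=(\pi(m),\widehat{A}(m)),
\]
of $F$-bundles with structure group $H$ over $B$; the second-factor projection $\proj_2:A^*M\to M$ recovers $\widehat{A}=\proj_2\circ\theta$.

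Now apply Lemma \ref{lem:liftingA} with $k=\infty$, $E_0=M$, $E_1=A^*M$, and the continuous isomorphism $\theta$. This yields a $C^\infty$ isomorphism $\alpha:M\to A^*M$ of $F$-bundles with structure group $H$ that is an isometry on fibers. Define $g:=\proj_2\circ\alpha:M\to M$. Then $g$ is $C^\infty$ as a composition of smooth maps; it lifts $A$ because $\pi\circ g=\pi\circ\proj_2\circ\alpha=A\circ(A^*\pi)\circ\alpha=A\circ\pi$, where the last step uses that $\alpha$ is a bundle map over $B$; it is a diffeomorphism because $\alpha$ is, and $\proj_2$ is a diffeomorphism with smooth inverse $m\mapsto(A^{-1}(\pi(m)),m)$; and it is an isometry on fibers because $\alpha$ is, while $\proj_2$ restricts on each fiber to the canonical identification of the fiber of $A^*M$ over $b$ with the fiber of $M$ over $A(b)$, which is an isometry with respect to the fixed Riemannian metric on $F$.

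The argument is thus essentially a bookkeeping reduction to Lemma \ref{lem:liftingA}, and the only mild conceptual step is the identification of lifts of $A$ with bundle isomorphisms $M\to A^*M$; I do not expect a substantive obstacle beyond this.
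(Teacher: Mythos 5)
Your argument is exactly the paper's: you build the continuous bundle isomorphism $\theta(m)=(\pi(m),\widehat{A}(m))$ from $M$ to $A^*M$, apply Lemma \ref{lem:liftingA} with $E_0=M$, $E_1=A^*M$ to obtain a $C^\infty$ isomorphism $\alpha$ that is a fiberwise isometry, and set $g=\proj_2\circ\alpha$. The only difference is that you spell out the diagram-chasing verifications ($\pi\circ g=A\circ\pi$, smoothness and invertibility of $\proj_2$, fiber isometry) that the paper leaves to Figure \ref{fig:isomorphism and lift}; the substance is identical.
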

	
	\begin{proof}
		Since $A$ lifts to a homeomorphism $\widehat{A}: M\to M$, we can construct a (continuous) isomorphism $\theta: M\to A^*M$ of $M$ and $A^*M$ as $F$-bundles with structure group $H$ given by $\theta(z)=\left(\pi(z), \widehat{A}(z) \right)$ (See Figure \ref{fig:isomorphism and lift1}). By Lemma \ref{lem:liftingA}, there is a $C^\infty$ isomorphism $\alpha:M\to A^*M$ that is an isometry on fibers. We can then use $\alpha:M\to A^*M$ to define a $C^\infty$ diffeomorphism $g:M\to M$ by $g(z)=\proj_2\circ \alpha(z)$ (See Figure \ref{fig:isomorphism and lift2}). Since $\alpha$ is an isometry on fibers, we get that $g$ is an isometry on fibers.
		
		\begin{figure}[h!]
			\centering
			\begin{subfigure}{.4\textwidth}
				\begin{tikzcd}
					M
						\arrow[rrd,"\widehat{A}", bend left]
						\arrow[rd, "\theta"]
						\arrow[rdd,"\pi", bend right]
					& &
				\\
					& A^*M
						\arrow[r,"\proj_2"]
						\arrow[d,"A^*\pi"]
					& M
						\arrow[d,"\pi"]
				\\
					& B 
						\arrow[r,"A"]
					& B
				\end{tikzcd}
				\caption{}
				\label{fig:isomorphism and lift1}
			\end{subfigure}
			\qquad
			\begin{subfigure}{.4\textwidth}
				\begin{tikzcd}
					M
						\arrow[rrd,"g", bend left]
						\arrow[rd, "\alpha"]
						\arrow[rdd,"\pi", bend right]
					& &
				\\
					& A^*M
						\arrow[r,"\proj_2"]
						\arrow[d,"A^*\pi"]
					& M
						\arrow[d,"\pi"]
				\\
					& B 
						\arrow[r,"A"]
					& B
				\end{tikzcd}
				\caption{}
				\label{fig:isomorphism and lift2}
			\end{subfigure}
			\caption{}
			\label{fig:isomorphism and lift}
		\end{figure}
	\end{proof}
	
	\begin{prop}\label{prop: lift of Anosov isometry on fibers is ph}
		Let $F,M,$ and $B$ be closed Riemannian manifolds. Assume that $\pi:M\to B$ is a smooth $F$-bundle and that $f:B\to B$ is an Anosov diffeomorphism. If $g:M\to M$ is a diffeomorphism that is a lift of $f$ and such that $g$ is an isometry on fibers of $\pi:M\to B$, then $g:M\to M$ is partially hyperbolic.
	\end{prop}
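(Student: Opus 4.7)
The plan is to produce a $dg$-invariant dominated splitting $TM = E^s \oplus E^c \oplus E^u$ in which $E^c = V := \ker d\pi$ is the vertical bundle, and $E^s, E^u$ project via $d\pi$ onto the stable and unstable bundles of the Anosov $f$. The vertical bundle $V$ is automatically $dg$-invariant (since $g$ covers $f$) and carries the fiberwise isometric action of $dg$, so $\|dg|_V\| = m(dg|_V) = 1$ (here $m$ denotes the conorm). The task thus reduces to constructing $dg$-invariant horizontal complements $E^s, E^u$ whose contraction/expansion dominates the isometry on $V$.

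First I would set up the geometry: choose an Ehresmann connection on the smooth bundle $\pi:M\to B$, giving a smooth horizontal distribution $H$ complementary to $V$; choose an adapted metric on $B$ for $f$ so that $\|df|_{E^s_B}\| \leq \lambda < 1 < \mu \leq m(df|_{E^u_B})$; and build a Riemannian metric on $M$ making $d\pi|_H$ a fiberwise isometry onto $TB$ with $H\perp V$, retaining the $g$-invariant metric on $V$. Pulling back the Anosov splitting of $TB$ via $d\pi|_H$ gives a continuous splitting $TM = V \oplus H^s \oplus H^u$. With respect to $V\oplus H$, the derivative takes the block form
\[
 dg = \begin{pmatrix} A & B \\ 0 & C \end{pmatrix},
\]
where $A$ is fiberwise isometric, $C$ is conjugate via $d\pi$ to $df$ (so it preserves the horizontal stable/unstable splitting with the rates $\lambda, \mu$), and $B:H\to V$ is a bounded bundle map measuring the failure of $dg$ to preserve $H$ (bounded because $M$ is compact and $g$ is $C^1$). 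The crucial feature is the strict domination $\|A\| = \|A^{-1}\| = 1$, $\|C|_{H^s}\| \leq \lambda < 1$, and $\|C^{-1}|_{H^u}\| \leq 1/\mu < 1$.

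To extract an invariant stable subbundle I seek $E^s$ as a graph, $E^s(x) = \{w + \varphi^s(x)w : w \in H^s(x)\}$, for a bounded continuous section $\varphi^s$ of $\mathrm{Hom}(H^s, V)$. The invariance $dg(E^s(x)) = E^s(g(x))$ translates to the cohomological equation
\[
 \varphi^s(x) = A(x)^{-1}\varphi^s(g(x))\,C(x)|_{H^s(x)} - A(x)^{-1}B(x)|_{H^s(x)},
\]
and the associated affine operator on the sup-norm Banach space of such sections is a contraction with Lipschitz constant $\|A^{-1}\|\cdot\|C|_{H^s}\| \leq \lambda$, whose unique fixed point gives the $dg$-invariant $E^s$. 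The unstable bundle $E^u$ is built symmetrically by a forward graph transform that contracts with ratio $1/\mu < 1$. From $dg(w + \varphi^s(x)w) = Cw + \varphi^s(g(x))(Cw)$ with $\|Cw\| \leq \lambda\|w\|$ one obtains $\|dg^n|_{E^s}\| \to 0$ exponentially, and analogously $m(dg^n|_{E^u}) \to \infty$, while $\|dg|_{E^c}\| = m(dg|_{E^c}) = 1$. Passing to a standard Mather-type adapted metric promotes these asymptotic rates to the one-step inequalities $\|dg|_{E^s}\| < 1 = \|dg|_{E^c}\| < m(dg|_{E^u})$ required by the definition. The main step requiring care is ensuring the graph-transform operator actually contracts despite the twist $B$; this is precisely where the strict gap $\lambda < 1 < \mu$, together with the isometry on $V$, becomes essential.
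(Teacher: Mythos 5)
Your proof is correct and takes essentially the same approach as the paper: both fix an Ehresmann connection to split $TM=V\oplus H$, lift an adapted Anosov metric so $d\pi|_H$ is an isometry, write $dg$ in block triangular form with an isometric vertical block and a hyperbolic horizontal block, and obtain $E^s,E^u$ as graphs over the horizontal (un)stable bundles via a fiberwise linear graph transform that contracts with ratio $\lambda$ (resp.\ $\mu^{-1}$) because $\|A\|=\|A^{-1}\|=1$. The only cosmetic differences are that you first build $E^s$ where the paper builds $E^u$, and you invoke a Mather-type adapted metric at the end where the paper directly defines the new metric by pushing $E^s$ and $E^u$ down to the adapted base metric.
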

	
	\begin{proof}
		We begin by constructing a Riemannian metric on $M$, with respect to which $g$ is partially hyperbolic. This construction has three ingredients:
		
		\begin{itemize}
		\item 
			A smooth family $\langle \cdot,\cdot\rangle^F_x$ of Riemannian metrics on the fibers $\pi^{-1}(x)$ such that $Dg:T\pi^{-1}(x)\to T\pi^{-1}(f(x))$ is an isometry for all $x\in B$. (Such a family exists because $g$ is an isometry on fibers of $\pi:M\to B$.)
		\item
			A Riemannian metric $\langle \cdot, \cdot \rangle^B$ on $B$ that is adapted to the Anosov diffeomorphism $f:B\to B$.
		\item
			An Ehresmann connection $H$ on $M$, i.e. $H$ is a smooth subbundle of the tangent bundle $TM$ such that for all $p\in M$, $T_pM=H_p\oplus \ker(D_p\pi)$. Note that from the definition of an Ehresmann connection, we know that $D_p\pi|_{H_p}:H_p\subset T_pM \to T_{\pi(p)}B$ is an isomorphism and the map $p\mapsto H_p$ is smooth.
		\end{itemize}
		
		We define a Riemannian metric $\langle \cdot, \cdot\rangle$ on $M$ by letting for all $p\in M$,
		
		\begin{itemize}
		\item 
			$\langle v,v'\rangle =\left\langle D_p\pi(v), D_p\pi(v')\right\rangle^B$ for $v,v'\in H_p$,
		\item
			$\langle v,v'\rangle = \langle v,v'\rangle_{\pi(p)}^F$ for $v,v'\in \ker(D_p\pi)$, and 
		\item 
			$H_p\perp \ker(D_p \pi)$.
		\end{itemize}
		
		Now, we need to show that $g$ is partially hyperbolic with respect to the metric $\langle \cdot, \cdot \rangle$. To do this, we construct a dominated splitting $TM=E^s\oplus E^c\oplus E^u$ such that $g$ is uniformly contracting on $E^s$ and uniformly expanding on $E^u$. We begin by letting $E^c=\ker(D\pi)$. 
		
		Next, we construct the unstable bundle $E^u$ using a graph transform argument. We begin by lifting the unstable bundle $E^u_f\subset TB$ for the Anosov diffeomorphism $f:B\to B$ to the bundle $\hat E^u\subset TM$ given by $\hat E^u_p:= H_p\cap D_p\pi^{-1}(E^u_f(\pi(p)))$ for $p\in M$. Note that $D_p\pi^{-1}(E^u_p(\pi(p)))=\hat E^u_p \oplus \ker(D_p\pi)$, and that $Dg$ preserves $\hat E^u\oplus E^c$ because $Df$ preserves $E^u_f$ and $g$ covers $f$ (so $D_{\pi(p)}f\circ D_p\pi=D_{g(p)}\pi\circ D_pg$). 
		
		Let 
			$$
				\Sigma=\left\{\sigma:\hat E^u\to E^c: \ \sigma \text{ is fiber preserving over } id, \text{ and } \sigma_p:\hat E^u(p)\to E^c(p) \text{ is linear } \forall p\in M \right\}
			$$
		We put the norm $\|\cdot\|_\Sigma$ on $\Sigma$ given by 
			$$
				\|\sigma\|_\Sigma=\sup_{p\in M} \|\sigma_p\|
			$$
		where $\|\sigma_p\|$ is the operator norm. Note that this norm makes $\Sigma$ a Banach space. 
		
		Now, we want to define a map $\Gamma:\Sigma \to \Sigma$, called the linear graph transform covering $g$, so that $D_pg(\graph(\sigma_p))=\graph(\Gamma(\sigma_p))$. We now give $\Gamma:\Sigma\to \Sigma$ explicitly. Since by assumption, $Dg$ preserves $E^c=\ker(D\pi)$ (and $Dg$ preserves $\hat E^u\oplus E^c$), we can write for each $p\in M$, 
			$$
				D_pg=\begin{pmatrix}
					A_p & 0\\ C_p & K_p
				\end{pmatrix}:\hat E^u(p)\oplus E^c(p) \to \hat E^u(g(p))\oplus E^c(g(p)),
			$$
		where 
			$$
				A_p:\hat E^u(p)\to \hat E^u(g(p)), \qquad C_p:\hat E^u(p)\to E^c(g(p)), \qquad K_p:E^c(p)\to E^c(g(p))
			$$
		are all linear. Also note that since $D_pg$ is invertible, both $A_p$ and $K_p$ are invertible.
		
		Note that we can write a point in the graph of $\sigma_p$ as $(v,\sigma_pv)\in \graph(\sigma_p)\subset \hat E^u(p)\oplus E^c(p)$. Applying $D_pg$ to this point gives us 
			$$
				D_pg(v,\sigma_p v)
				=
				\begin{pmatrix}
					A_p&0\\C_p&K_p
				\end{pmatrix}
				\begin{pmatrix}
					v \\ \sigma_pv
				\end{pmatrix}
				=
				\begin{pmatrix}
					A_pv\\ C_p v +K_p\sigma_p v
				\end{pmatrix}
			$$
		So, we can write
			$$
				D_pg(\graph(\sigma_p))=
				\left\{ 
					\begin{pmatrix}
						A_pv\\ C_p v +K_p\sigma_p v
					\end{pmatrix} 
					: \ v\in \hat E^u(p)
				\right\}
				=
				\left\{ 
					\begin{pmatrix}
						w\\ \left(C_p  +K_p\sigma_p \right)\circ A_p^{-1}w
					\end{pmatrix} 
					: \ w\in \hat E^u(g(p))
				\right\}	
			$$
		by reparametrizing. Thus, the requirement that $D_pg(\graph(\sigma_p))=\graph(\Gamma(\sigma_p))$ is equivalent to saying that 
			$$
					\begin{pmatrix}
						w\\ \left(C_p  +K_p\sigma_p \right)\circ A_p^{-1}w
					\end{pmatrix} 		
				=
					\begin{pmatrix}
						w \\ \Gamma\sigma_p w
					\end{pmatrix}		
			$$
		for all $w\in \hat E^u(g(p))$. This gives us an equation for $\Gamma$ in terms of $C, \ K,$ and $A$:
			$$
				\Gamma\sigma_p =\left(C_p  +K_p\sigma_p \right)\circ A_p^{-1}:\hat E^u(g(p))\to E^c(g(p))
			$$
		for all $\sigma\in \Sigma$, $p\in M$, and $v\in \hat E^u(p)$. Omitting base points, we get that 
			$$
				\Gamma \sigma = \left(C  +K\sigma \right)\circ A^{-1}
			$$
		
		Now, our goal is to find an invariant section for $\Gamma$ (the graph of which we will then show is the unstable bundle $E^u$ for $g$). To do this, it suffices to show that $\Gamma:\Sigma\to \Sigma$ is a contraction. Take $\sigma, \sigma'\in \Sigma$. For $p\in M$, we have 
			\begin{align}
					\|\Gamma \sigma_p - \Gamma \sigma_p'\|
				&= 
					\left\| \left( C_p+K_p \sigma_p \right)\circ A_p^{-1} - \left( C_p+K_p \sigma_p' \right)\circ A_p^{-1} \right\|\nonumber\\
				&= 
					\left\| \left( C_p+K_p \sigma_p-C_p-K_p\sigma'_p \right)\circ A_p^{-1}\right\|\nonumber\\
				&=
					\left\| \left(K_p \sigma_p-K_p\sigma'_p \right)\circ A_p^{-1}\right\|\nonumber\\
				&=
					\left\| K_p \circ \left(\sigma_p-\sigma'_p  \right)\circ A_p^{-1}\right\|\nonumber\\
				&\leq 
					\|K_p\| \|\sigma_p-\sigma'_p\|\|A_p^{-1}\| \label{eq:graph transform contraction}
			\end{align}
		We now bound $\|K_p\|$ and $\|A_p^{-1}\|$. Since $g$ is an isometry on fibers of $\pi:M\to B$ and $E^c=\ker(D\pi)$, we get have that $D_pg|_{E^c(p)}=K_p:E^c(p)\to E^c(g(p))$ is an isometry. Thus, $\|K_p\|=1$. Now, we bound $\|A_p^{-1}\|$ by relating the norm of $A$ to the norm of $Df$ on $E^u_f$. Since $E^u_f$ is the unstable bundle for the Anosov diffeomorphism $f:B\to B$ and the norm $\|\cdot \|^B$ is adapted to $f$, we know that there exists a constant $\lambda>1$ such that for all $w\in E^u_f$, $\|Df(w)\|^B\geq \lambda \|w\|^B$. Take $v\in \hat E^u(p)$. Since $D_p\pi(v)\in E^u_f(\pi(p))$, we therefore have that
			$$
				\|D_{\pi(p)}f(D_p\pi(v))\|^B\geq \lambda \|D_p(v)\|^B.
			$$
		Since $g$ covers $f$, we see that $D_{\pi(p)}f( D_p\pi(v))=D_{g(p)}\pi(D_pg(v))$ This along with the fact that $E^c=\ker(D\pi)$ and $C_p(v)\in E^c(g(p))$ gives that 
			$$
					D_{\pi(p)}f(D_p\pi(v))
				=
					D_{g(p)}\pi(D_pg(v))
				=
					D_{g(p)}\pi(A_p(v)+C_p(v))
				=
					D_{g(p)}\pi(A_p(v))
			$$
		Thus,
			$$
					\lambda \|D_p(v)\|^B
				\leq 
					\|D_{\pi(p)}f(D_p\pi(v))\|^B
				=
					\| D_{g(p)}\pi(A_p(v)) \|^B
			$$
		Finally, note that since $v, A_p(v)\in \hat E^u(P)\subset H_p$, by our definition of the norm on $M$, we get that 
			$$
				\|D_p\pi(v)\|^B=\|v\| \qquad \text{ and } \qquad \| D_{g(p)}\pi(A_p(v)) \|^B=\|A_p(v)\|.
			$$
		We've therefore shown that $\lambda\|v\|\leq \|A_p(v)\|$, which implies that $\|A_p^{-1}\|\leq \lambda^{-1}$. 
		
		Combining our estimates for the norms of $\|K_p\|$ and $\|A_p^{-1}\|$ with \eqref{eq:graph transform contraction} gives that 
			$$
				\|\Gamma \sigma_p - \Gamma \sigma_p'\|\leq \lambda^{-1}\|\sigma_p-\sigma'_p\|.
			$$
		We have therefore shown that $\Gamma$ is a contraction map. Then, by the contraction mapping principle, we get a $\Gamma$-invariant section $\sigma^u\in \Sigma$. We now define a bundle $E^u\subset TM$ by letting $E^u(p):=\graph(\sigma^u_p)$. Note that $E^u$ is $Dg$ invariant since $\sigma^u$ is $\Gamma$ invariant and $D_pg(\graph(\sigma_p))=\graph(\Gamma(\sigma_p))=\graph(\sigma_{g(p)})$.
		
		The construction of the bundle $E^s$ is analogous. 
		
		We now have a $Dg$-invariant splitting, $TM=E^s\oplus E^c\oplus E^u$. We now need to show that this splitting is partially hyperbolic. To do this, we construct a new metric $\langle \cdot, \cdot \rangle'$ on $M$ by letting for all $p\in M$,
			\begin{itemize}
			\item $\langle v,v'\rangle' = \langle D_p\pi(v),D_p\pi(v')\rangle^B$ for $v,v'\in E^s$,
			\item $\langle v,v'\rangle' = \langle D_p\pi(v),D_p\pi(v')\rangle^B$ for $v,v'\in E^u$,
			\item $\langle v,v'\rangle' = \langle v,v' \rangle^F_{\pi(p)}$ for $v,v'\in E^c$, and 
			\item $E^s$, $E^c$, and $E^u$ be pairwise orthogonal.
			\end{itemize}
		From our construction of $E^s$ and $E^u$, we get that $Dg$ is uniformly expanding on $E^u$ and uniformly contracting on $E^c$ with respect to this new metric. Finally, the splitting is dominated because $Dg$ restricted to $E^c$ is an isometry.  
	\end{proof}

\section{Proof of Theorem \ref{thm:smoothmodel}}\label{sect:proof of main thm}

	First, we recall our setup. Let $\hat{f}:M\to M$ be a fibered partially hyperbolic system with quotient a nilmanifold $B$, $C^1$ fibers $F$ (where $F$ is a closed manifold), and structure group $G\subset \Diff^1(F)$. Suppose that there exists a Riemannian metric on F and a subgroup $I\subset \Isom(F)\cap G$ such that the inclusion $I \hookrightarrow G$ is a homotopy equivalence. 
	
	The diffeomorphism $\hat{f}:M\to M$ descends to a homeomorphism $f:B\to B$. Our first step is to construct a conjugacy $h:B\to B$ between $f$ and a hyperbolic nilmanifold automorphism $A:B\to B$. This will follow immediately from Theorem \ref{thm:HiraideNilmanifolds} if we can show that $f:B\to B$ is an Anosov homeomorphism. To see why the homeomorphism  $f:B\to B$ is Anosov, we first observe that $\hat f$ admits an invariant center foliation $\F^c$ whose leaves are the level sets of $\pi$ and that leaves of $\F^c$ are compact and have trivial holonomy.
		\footnote{For the definition of holonomy, see \cite[Chapter 2]{CandelConlon}. The fact that the leaves of $\F^c$ have trivial holonomy follows immediately from the definition of a fibered partially hyperbolic system and the definition of holonomy.} 
	Thus, by the following result of Bohnet and Bonatti, $f:B\to B$ is an \textit{Anosov homeomorphism}.
		
		\begin{lemma}[{\cite[Theorem 2, Proposition 4.20]{BohnetBonatti}}] \label{lem:BohnetBonatti}
			If $f:M\to M$ is a partially hyperbolic diffeomorphism with an invariant center foliation $\F^c$ with compact leaves and without holonomy, then the homeomorphism $F:M/\F^c \to M/\F^c$ induced by $f$ on the quotient is an Anosov homeomorphism.
		\end{lemma}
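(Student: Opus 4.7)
The plan is to verify the two defining properties of an Anosov homeomorphism—expansivity and shadowing—for $F\colon M/\F^c \to M/\F^c$ by exploiting the local product structure of the partially hyperbolic diffeomorphism $f$ together with the rigidity of $\F^c$ coming from having compact leaves without holonomy.

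As a setup, a classical theorem in foliation theory (Reeb stability together with Epstein's theorem on compact foliations) ensures that a foliation of a compact manifold by compact leaves without holonomy is the fiber foliation of a continuous fiber bundle $\pi\colon M \to M/\F^c$; in particular $M/\F^c$ is a topological manifold and $F$ is a well-defined homeomorphism. Locally, for every $x \in M$, one can trivialize $\pi$ and choose a topological transversal to $\F^c(x)$ identified with $W^s_{\mathrm{loc}}(x) \times W^u_{\mathrm{loc}}(x)$, where the factors are the local strong stable and strong unstable manifolds of $f$. In these coordinates $F$ is modeled by the genuinely hyperbolic map induced by $f$ on the product of the strong stable and strong unstable directions.

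For expansivity, fix $c > 0$ smaller than a local product chart, and suppose $d(F^n \bar{x}, F^n \bar{y}) < c$ for all $n \in \Z$. Lift $\bar{x}$ to $x \in M$, and via the local product structure pick a representative $y \in \F^c(\bar{y})$ on the local transversal to $\F^c(x)$ through $x$, with transversal coordinates $(y_s, y_u)$. Since $f$ contracts the stable coordinate uniformly forward and expands the unstable coordinate uniformly forward, while both coordinates must remain in bounded charts under every $F^n$, a standard hyperbolic-cones argument forces $y_s = y_u = x$; hence $\F^c(y) = \F^c(x)$ and $\bar{x} = \bar{y}$.

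For shadowing, given $\varepsilon > 0$ I would choose $\delta > 0$ smaller than a local product size and an $f$-compatibility constant. A $\delta$-pseudo-orbit $\{\bar{x}_i\}$ of $F$ admits lifts $\{x_i\}$ in $M$ such that $f(x_i)$ and $x_{i+1}$ lie in a common local product chart transverse to $\F^c$. Using the hyperbolic local product structure of $f$ on these transversals, one constructs a shadowing point as the fixed point of a graph transform on sequences of transversal coordinates, exactly as in the classical Anosov shadowing proof; compactness and triviality of center holonomy ensure that this construction descends consistently to $M/\F^c$ and that the projected orbit $\varepsilon$-shadows $\{\bar{x}_i\}$.

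The main obstacle I foresee is the shadowing step: $f$ itself typically fails to shadow in $M$ because of neutral behavior along $\F^c$, so the argument must be carried out entirely modulo the center foliation. The hypotheses of compact leaves and trivial holonomy are precisely what make the local product structure descend consistently to the quotient and allow the graph-transform fixed point to define a genuine orbit in $M/\F^c$; without them, the transversal construction would not glue into a global shadowing orbit on the quotient.
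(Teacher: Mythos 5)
The paper does not prove this lemma at all; it cites it directly from Bohnet and Bonatti (Theorem 2 and Proposition 4.20 of \cite{BohnetBonatti}) and simply invokes it in Section \ref{sect:proof of main thm}. So there is no internal proof to compare yours against; I can only assess your proposal on its own terms.

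Your outline has the right general shape: compact leaves plus trivial holonomy give (via local Reeb stability) a fiber bundle structure on $\pi\colon M\to M/\F^c$, the transversals to $\F^c$ carry a hyperbolic local product structure from $W^s_{\mathrm{loc}}\times W^u_{\mathrm{loc}}$, and both expansivity and shadowing should come from a cone argument and a graph transform in those coordinates, carried out modulo the center direction. You are also right to flag the shadowing step as the delicate part.

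However, the argument as written has a genuine gap in both steps, and it is the same gap: you assert that ``$F$ is modeled by the genuinely hyperbolic map induced by $f$'' on the transversal coordinates, but you never verify that this transversal picture is actually $F$-equivariant. The diffeomorphism $f$ sends a transversal through $x$ to a transversal through $f(x)$, and the (stable, unstable) coordinates of $f(y)$ relative to $f(x)$ are obtained from those of $y$ relative to $x$ only after sliding along center plaques. What has to be shown is that the projections $\pi(W^s_{\mathrm{loc}}(x))$ and $\pi(W^u_{\mathrm{loc}}(x))$ give a well-defined local product structure on the quotient that is contracted, respectively expanded, by $F$; equivalently, that the $su$-holonomies along $\F^c$ are compatible with the product structure. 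This is exactly where one needs the existence of the center-stable and center-unstable foliations (dynamical coherence, or at least the $\F^c$-saturation property), which is a nontrivial fact for compactly fibered partially hyperbolic systems and is not automatic from the hypotheses you list. Without this, your expansivity step (``both coordinates must remain in bounded charts under every $F^n$'') and your shadowing step (``the graph transform descends consistently to $M/\F^c$'') are assertions rather than consequences. A correct write-up would first establish that the quotient carries a transverse pair of (generalized) stable/unstable foliations with a local product structure on which $F$ is hyperbolic, and only then run the cone and graph-transform arguments directly downstairs, rather than lifting back and forth to $M$.
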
		
	
	Now, we can apply Theorem \ref{thm:HiraideNilmanifolds} to get that there exists a hyperbolic nilmanifold automorphism $A:B\to B$ and a homeomorphism $h:B\to B$ that is homotopic to the identity such that $A\circ h=h\circ \hat{f}$. 
	
	The next step in the proof is to construct a smooth $F$-bundle $\widehat{\pi}:\widehat M\to B$ that is isomorphic to the original bundle $\pi:M\to B$ and such that the structure group of $\widehat{\pi}:\widehat M \to B$ is $\Isom(F)$. To do this, we first construct a principal $G$ bundle $p:P\to B$ with the same transition functions as $\pi:M\to B$. Since the inclusion of $I\hookrightarrow G$ is a homotopy inclusion, by Lemma \ref{lem:reduction-structure-group} there exists a continuous principal $I$-bundle $q':Q'\to B$ that has transition functions cohomologous to those of $p:P\to B$. Since $I\subset \Isom(F)$, we can construct (Lemma \ref{lem:fbconstruction}) a continuous principal $\Isom(F)$-bundle $q:Q\to B$ with the same transition data as $q':Q'\to B$. 
	
	Now, we find a smooth principal $\Isom(F)$ bundle $\hat q:\hat Q\to B$ that is isomorphic to $q:Q\to B$. This follows immediately from the following lemma along with the fact that $\Isom(F)$ is a locally Euclidean Lie group \cite{MyersSteenrod}.
	
		\begin{lemma}[{\cite{MullerWockel}}]\label{lem:MullerWorkel}
			Let $K$ be a Lie group modeled on a locally convex space. Every principal $K$ bundle over a closed manifold is isomorphic to a smooth principal $K$ bundle.
		\end{lemma}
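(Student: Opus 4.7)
The plan is to work at the level of transition functions: show that any continuous cocycle on a good cover of $B$ with values in $K$ is cohomologous to a smooth cocycle, and then invoke Lemma \ref{lem:fbconstruction} together with Corollary \ref{cor:transitionfcns}. Since $B$ is a closed manifold, one can choose a finite good cover $\{U_i\}_{i=1}^N$ so that the continuous principal $K$-bundle $Q\to B$ is described by continuous transition functions $t_{ij}:U_i\cap U_j\to K$ satisfying the cocycle relation $t_{ij}t_{jk}=t_{ik}$. The goal is to produce smooth $\tilde t_{ij}$ together with continuous functions $c_i:U_i\to K$ such that $\tilde t_{ij}=c_i^{-1}t_{ij}c_j$, and then to arrange that the $c_i$ themselves can be taken smooth.

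The key local tool is the smooth exponential chart at the identity of $K$: since $K$ is a Lie group modeled on a locally convex space, there is a smooth local chart $\exp:U\subset \mathfrak{k}\to V\subset K$. After refining the cover so that each image $t_{ij}(U_i\cap U_j)$ lies in a suitable translate of $V$, the problem of smoothing an individual $t_{ij}$ reduces to smoothing a continuous map into the locally convex space $\mathfrak{k}$, which can be handled either by convolution against smooth bump functions subordinate to a partition of unity or by a Whitney-type approximation theorem for maps into locally convex spaces.

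The main obstacle is that chart-by-chart smoothing destroys the cocycle relation because $K$ is non-abelian, so one cannot simply approximate each $t_{ij}$ independently. I would proceed by induction over the nerve of the cover, smoothing one transition function at a time and compensating for the resulting failure of the cocycle condition on further overlaps by a coboundary $c_i$ close to the identity. The Baker--Campbell--Hausdorff formula, valid near the identity in a locally convex Lie group, provides quantitative control on the size of these corrections, so that with the approximations taken sufficiently fine the compensating cochains $c_i$ can themselves be chosen smooth. The resulting smooth cocycle $\{\tilde t_{ij}\}$ then determines, via Lemma \ref{lem:fbconstruction}, a smooth principal $K$-bundle over $B$ that is isomorphic, by Corollary \ref{cor:transitionfcns}, to the original continuous bundle $q:Q\to B$.
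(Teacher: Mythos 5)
The paper offers no proof of this lemma; it simply cites M\"uller and Wockel, so there is no internal argument to compare yours against. Your overall framing --- pass to a finite good cover, treat the bundle via a continuous $K$-valued \v{C}ech cocycle, use the Lie group chart near the identity to move the problem into the modeling locally convex space, smooth there, and then quote Lemma~\ref{lem:fbconstruction} and Corollary~\ref{cor:transitionfcns} --- is the correct setup and is broadly the strategy of the cited reference.

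However, the central step as you describe it has a genuine gap. You propose to ``smooth one transition function at a time and compensate for the resulting failure of the cocycle condition on further overlaps by a coboundary $c_i$.'' These two operations do not interact the way the sketch suggests. Replacing a single $\tau_{ij}$ by a nearby smooth map destroys the cocycle identity on every triple overlap containing $i$ and $j$, and a coboundary correction $\tau_{ij}\mapsto c_i^{-1}\tau_{ij}c_j$ modifies \emph{every} transition function carrying the index $i$ or $j$ simultaneously, potentially un-smoothing ones you have already treated. Moreover, once $c_i$ and $c_j$ are chosen, $c_i^{-1}\tau_{ij}c_j$ is determined; you cannot independently prescribe it to be smooth. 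The real problem is therefore to choose, once and for all, functions $c_i:U_i\to K$ (close to $\id$) so that \emph{every} $c_i^{-1}\tau_{ij}c_j$ is smooth, and the proposed induction over the nerve does not explain how to satisfy, consistently, the accumulating constraints on $c_j$ coming from all $i<j$ --- nor how the ``compensating cochains'' escape the circularity of themselves needing to solve a smoothing problem. Invoking BCH for ``quantitative control'' names the right local tool but does not address this global consistency issue, which is precisely the technical content M\"uller--Wockel supply. As written, the argument is a plausible outline, but the step that makes it a proof is missing.
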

	
	Now, we use the fiber bundle construction theorem (Lemma \ref{lem:fbconstruction}) to construct a smooth $F$-bundle $\widehat \pi:\widehat M\to B$ with the same transition functions as $\hat q: \hat Q\to B$. Since $\hat q:\hat Q\to B$ has transition functions that are cohomologous to those of the original bundle $\pi:M\to B$, we get that the bundle $\widehat \pi: \widehat M\to B$ is isomorphic to the original bundle $\pi:M\to B$. 
	
	Next, we lift the conjugacy $h:B\to B$ to a homeomorphism $\hat h: M\to \widehat M$. This follows immediately from Lemma \ref{lem:liftingconjugacy}. 
	
	Finally, we lift the hyperbolic nilmanifold automorphism $A:B\to B$ to a partially hyperbolic diffeomorphism $g:\widehat M \to \widehat M$. This follows immediately from Corollary \ref{cor:liftingA} and Proposition \ref{prop: lift of Anosov isometry on fibers is ph}. To see why we can apply Corollary \ref{cor:liftingA} here, we first note that the structure group of $\widehat \pi: \widehat M \to B$ is $\Isom(F)$, which is a finite dimensional compact Lie group \cite{MyersSteenrod}. This implies that $\Isom(F)$ has a smooth universal bundle \cite[Lemma I.12]{MullerWockel}.  This completes the proof of Theorem \ref{thm:smoothmodel}.

	\begin{remark}\label{remark:assumption on structure group}
		The fact that the $F$-bundle $\widehat M$ has structure group $\Isom(F)$ is solely used to guarantee that the lift $g:\widehat M\to \widehat M$ of $A:B\to B$ is partially hyperbolic. Without this fact, the arguments given would allow us to lift $A:B\to B$ to a homeomorphism, but we would not be able to guarantee that the lift would be a partially hyperbolic diffeomorphism.
		
		This is the only reason that the assumption that that there exists a Riemannian metric on $F$ and a subgroup $I\subset \Isom(F)\cap G$ such that the inclusion $I\hookrightarrow G$ is a homotopy equivalence is necessary in the proof. Without this assumption, we would be able to get the conjugacy $h:B\to B$ between $f$ and $A$, and we would be able to construct a smooth $F$-bundle $\widehat M$ over $B$ that is isomorphic to the original $F$-bundle $M$.\footnote{To construct $\widehat M$, we would first use Lemma \ref{lem:reduction-structure-group} and Proposition \ref{prop:InclusionDiffeoGrps} to get a $F$-bundle with structure group $\Diff^\infty(F)$ that is isomorphic to $M$. We then would apply Lemma \ref{lem:MullerWorkel} with $K=\Diff^\infty(F)$, which would give us $\widehat M$.} However, the structure group of $\widehat M$ would only be $\Diff^\infty(F)$, not $\Isom(F)$. 
		 
		Finding a way to lift $A:B\to B$ to a partially hyperbolic diffeomorphism $g:\widehat M\to \widehat M$ without requiring that the structure group of $\widehat M$ be $\Isom(F)$  or be trivial is a question for further research.
	\end{remark}
	
	Proposition \ref{prop: trivial case main thm} is an example of a case where we can overcome the difficulty lifting $A:B\to B$ to a partially hyperbolic diffeomorphism that is discussed in the above remark. 
	
	\begin{proof}[Proof of Proposition \ref{prop: trivial case main thm}]
		The setup of Proposition \ref{prop: trivial case main thm} is that we are given a fibered partially hyperbolic system $\hat f:M\to M$ with quotient a nilmanifold $B$ and $C^1$ fibers $F$. We assume that the $F$-bundle $M$ is trivial. Note that this means that the structure group of the bundle $\pi:M\to B$ is the trivial group. We can then proceed with an analogous argument to the one given in the proof of Theorem \ref{thm:smoothmodel} to get the conjugacy $h:B\to B$ between the Anosov homeomorphism $f:B\to B$ induced by $\hat f:M\to M$ and a hyperbolic nilmanifold automorphism $A:B\to B$ and to get a smooth bundle $\widehat \pi : \widehat M \to B$ with trivial structure group that is isomorphic to the original bundle $\pi: M\to B$. This means that identifying the smooth $F$-bundle $\widehat \pi: \widehat M \to B$ with the smooth bundle $\proj_1: B\times F \to B$ (that is projection onto the first coordinate), we can smoothly lift $A:B\to B$ to the fibered partially hyperbolic diffeomorphism $g:\widehat M\cong B\times F\to \widehat M\cong B\times F$ given by $g: (x,y)\mapsto (Ax,y)$ for $(x,y)\in B\times F$. 
	\end{proof}

\section{Corollaries of the Theorem \ref{thm:smoothmodel}}\label{sect:corollaries}

	We now explain how Corollaries \ref{cor:StructureGroupDiff} and \ref{cor:StructureGroupConnectedComponent} follow from Theorem \ref{thm:smoothmodel}.
	
	To prove Corollary \ref{cor:StructureGroupDiff}, we apply Theorem \ref{thm:smoothmodel} with $G=\Diff^1(F)$ and with $I=\Isom(F)$. To do this, we just need to show that the inclusion $\Isom(F)\hookrightarrow \Diff^1(F)$ is a homotopy equivalence when $F=S^n$ for $n=1,2,3$ and for $F$ a hyperbolic 3-manifold. In fact showing that the inclusion $\Isom(F)\hookrightarrow \Diff^1(F)$ is a homotopy equivalence is equivalent to showing that the inclusion $\Isom(F)\hookrightarrow \Diff^\infty(F)$ is a homotopy equivalence by Proposition \ref{prop:InclusionDiffeoGrps}.
		\begin{enumerate}
		\item 
			When $F=S^1$ it is a standard fact that $\Diff^\infty(S^1)$ deformation-retracts to $O(2)=\Isom(S^1)$. When $F=S^2$, Smale \cite{SmaleDiffofS2} proved that the inclusion $\Isom(S^2)\hookrightarrow \Diff^\infty(S^2)$ is a homotopy equivalence. Hatcher \cite{HatcherSmaleConj} proved this for $S^3$.
		\item 
			When $F$ is a hyperbolic $3$-manifold, Gabai \cite{Gabai} proved that the inclusion $\Isom(F)\hookrightarrow \Diff^\infty(F)$ is a homotopy equivalence.
		\end{enumerate}
		
	\begin{remark}
		When $n\geq 4$, the inclusion $\Isom(S^n)\hookrightarrow \Diff(S^n)$ is not a homotopy equivalence. This was proved for $n=4$ in \cite{WatanabeSmaleConjS4}. To see that $\Isom(S^n)\hookrightarrow \Diff(S^n)$ is not a homotopy equivalence for $n\geq 5$, first note that this statement is equivalent to the statement that $\Diff(D^n \ \text{rel } \partial D^n)$ is contractible \cite[Appendix]{HatcherSmaleConj}. 
		
		One way to see that $\Diff(D^n \ \text{rel } \partial D^n)$ is not contractible for many $n$ is to use the fact that $\pi_0(\Diff(D^n \ \text{rel } \partial D^n))\cong \Theta_{n+1}$ for $n\geq 5$, where $\Theta_{n+1}$ is the group of exotic $(n+1)$-spheres \cite{HatcherDiffeoGrps} \cite{smale_generalized_1961} \cite{cerf_stratification_1970}. For example, this along with the fact that there exist exotic 7-spheres \cite{milnor_manifolds_1956} implies that $\Isom(S^6)\hookrightarrow \Diff(S^6)$. 
		
		To prove that $\Diff(D^n \ \text{rel } \partial D^n)$ is not contractible for $n=5$, we use the fact that the map $\pi_1(\Diff(D^n \text{ rel } \partial D^n))\to \pi_0(\Diff(D^{n+1} \text{ rel } \partial D^{n+1}))$ is surjective for $n\geq 5$ \cite{cerf_stratification_1970}. Thus, since $\pi_0(\Diff(D^{6} \text{ rel } \partial D^{6}))\neq 0$, we get that $\pi_1(\Diff(D^5 \text{ rel } \partial D^5))\neq 0$, so $\Diff(D^5 \text{ rel } \partial D^5)$ is not contractible.
		
		For $n\geq 7$, the fact that $\Diff(D^n \ \text{rel } \partial D^n)$ is not contractible is proved in \cite{crowley_gromoll_2013}. 
		
		This means that we cannot apply Theorem \ref{thm:smoothmodel} as above to get an analogous version of Corollary \ref{cor:StructureGroupDiff} for $S^n$, $n\geq 4$. 
	\end{remark}
	
	Now, we prove Corollary \ref{cor:StructureGroupConnectedComponent}. In Corollary \ref{cor:StructureGroupConnectedComponent}, the structure group of $M$ is $G=\Diff_0^1(F)$. We prove each case of Corollary \ref{cor:StructureGroupConnectedComponent} separately using the same strategy: we apply Theorem \ref{thm:smoothmodel} by finding a subgroup $I\subset \Isom(F)\cap \Diff^1_0(F)$ such that the inclusion $I\hookrightarrow \Diff^1_0(F)$ is a homotopy equivalence.
		\begin{enumerate}
		\item 
			When $F=\T^n$ for $n=2,3$, we choose $I=\T^n$, where $\T^n$ acts on itself by Euclidean isometry via translation. When $F=\T^2$, the inclusion $\mathbb{T}^2\hookrightarrow \Diff^\infty_{0}(\mathbb{T}^2)$ is a homotopy equivalence \cite[Section 4.1.5]{Morita}. When $F=\T^3$, $\mathbb{T}^3\hookrightarrow \Diff^\infty_{0}(\mathbb{T}^3)$ is a homotopy equivalence \cite{Hatcher3manifoldsUpdate} \cite{Waldhausen}.
		\item 
			When $F$ is a hyperbolic surface, then $\Diff^+_0(F)$ is contractible \cite[Thereom 1.14]{FarbMargalit}, which means that the hypothesis of Theorem \ref{thm:smoothmodel} holds for $I$ the trivial subgroup.
		\item
			When $F$ is a Haken manifold we consider three cases:
			
			\begin{itemize}
			\item 
				When $F$ is not a Seifert manifold with coherently orientable fibers, then the components of $\Diff(F)$ are contractible \cite{Hatcher3manifolds}, \cite{Hatcher3manifoldsUpdate}, \cite[Section 1.3]{DiffeosElliptic3Manifolds}, which means that the hypothesis of Theorem \ref{thm:smoothmodel} holds when $I$ is the trivial subgroup.
			\item 
				When $F$ is a Seifert manifold with coherently oriented fibers that is not $\T^3$, we take $I=S^1$, where $S^1$ acts on $F$ by rotating circle fibers of the Seifert fiber bundle structure. This satisfies the hypothesis of Theorem \ref{thm:smoothmodel} because the inclusion $S^1\hookrightarrow \Diff_0(F)$ is a homotopy equivalence \cite{Hatcher3manifolds}, \cite{Hatcher3manifoldsUpdate}, \cite[Section 1.3]{DiffeosElliptic3Manifolds}.
			\item
				When $F=\mathbb{T}^3$, we dealt with this case in \eqref{cor:fibers torus}. 
			\end{itemize}
			
		\end{enumerate}
		
\section{Appendix: Anosov Homeomorphisms}\label{sec:HiraideGeneralization}

	The goal of this appendix is to provide a proof of Theorem \ref{thm:HiraideNilmanifolds}. This result was initially proved by Sumi \cite{Sumi}, and we provide a proof which follows the same structure as that of Sumi's for the sake of completeness. Theorem \ref{thm:HiraideNilmanifolds} extends the following result of Franks and Manning to Anosov homeomorphisms of nilmanifolds.
	
		\begin{theorem}[\cite{Franks_Anosov_tori},\cite{Manning_conjugacy}]\label{thm:FranksManning}
			An Anosov diffeomorphism of a nilmanifold is topologically conjugate to a hyperbolic nilmanifold automorphism.
		\end{theorem}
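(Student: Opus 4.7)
The plan is to follow the Hiraide--Franks--Manning template, adapting it from $\T^n$ to a general nilmanifold $N/\Gamma$ by replacing linear algebra over $\Z^n$ with Mal'cev rigidity for simply connected nilpotent Lie groups. Let $f:N/\Gamma\to N/\Gamma$ be the given Anosov homeomorphism.

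The first step is to produce a candidate model automorphism. Since $N/\Gamma$ is a $K(\Gamma,1)$, the map $f$ induces an automorphism $f_*$ of $\pi_1(N/\Gamma)=\Gamma$, and Mal'cev rigidity extends $f_*$ uniquely to a Lie group automorphism $A:N\to N$ preserving $\Gamma$ and descending to a nilmanifold automorphism $\bar A:N/\Gamma\to N/\Gamma$. Because nilmanifolds are aspherical, homotopy classes of self-maps of $N/\Gamma$ are determined by the induced endomorphism on $\pi_1$, and consequently $f\simeq \bar A$. Lifting $f$ and $\bar A$ to maps $\tilde f,A:N\to N$ compatibly with the homotopy, one obtains $\sup_{x\in N} d(\tilde f(x),A(x))<\infty$, since the homotopy projects to a compact nilmanifold.

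The main obstacle is to show that $A$ is hyperbolic, meaning its induced Lie algebra automorphism $A_*$ has no eigenvalue of unit modulus. Without Anosov smoothness one cannot invoke standard spectral growth-rate arguments, so the strategy is contrapositive: if $A_*$ had a unit-modulus eigenvalue, then by choosing a generalized eigenvector in $\mathfrak n$ one produces two distinct sequences $\{A^n x\}$ and $\{A^n y\}$ in $N$ that stay a bounded distance apart for all $n\in\Z$. Boundedness of $d(\tilde f,A)$ converts these into pseudo-orbits of $\tilde f$; projecting to $N/\Gamma$ and applying the shadowing property of $f$ yields two genuine $f$-orbits that remain uniformly close, contradicting expansivity. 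Careful bookkeeping is required to handle the nilpotent filtration, since eigenvalue statements on $\mathfrak n$ must be promoted to bounded-distance statements on $N$ despite the polynomial drift contributed by central commutators; this is where Sumi's argument is most delicate and where I expect the bulk of the technical work.

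Granted hyperbolicity of $A$, I construct the conjugacy by shadowing. For each $x\in N/\Gamma$, the orbit $\{\bar A^n x\}$ together with the bounded difference between $\tilde f$ and $A$ yields a $\delta$-pseudo-orbit of $f$, where hyperbolicity of $A$ is used to contract the bounded error along stable and unstable directions and make $\delta$ arbitrarily small. The shadowing point is unique by expansivity of $f$; call it $H(x)$. Continuity of $H$ and the semiconjugacy relation $H\circ f=\bar A\circ H$ follow by standard arguments, and $H$ is homotopic to the identity by construction of its lift. Injectivity of $H$ follows from expansivity of $\bar A$ (which itself follows from its hyperbolicity), and surjectivity from properness together with a degree-one argument, giving that $H$ is a homeomorphism homotopic to the identity and completing the proof.
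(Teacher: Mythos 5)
The paper does not re-prove this theorem (it is cited to Franks and Manning), so the natural comparison is with the paper's proof of the generalization, Theorem~\ref{thm:HiraideNilmanifolds}. Your overall template matches that proof: Mal'cev rigidity to build the linearization $A$, hyperbolicity of $A$, a semiconjugacy $H$, and then bijectivity of $H$. Your linearization step and your shadowing construction of $H$ are fine.

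The serious gap is your argument that $A$ is hyperbolic. You propose a contrapositive: a unit-modulus eigenvalue of $A_*$ gives a pair $x\neq y$ in $N$ with $d(A^n x,A^n y)$ bounded; you then want to shadow these by genuine orbits and contradict expansivity. The problem is quantitative: the pseudo-orbit error is $\sup d(\tilde f,A)$, a fixed constant $C$ with no reason to be small, so the shadowing radius $\varepsilon_0$ that shadowing gives for $C$-pseudo-orbits is also a fixed (possibly large) constant. You can make the separation $K$ of the two $A$-orbits tiny by rescaling the eigenvector, but you cannot shrink $\varepsilon_0$, so the two shadowing orbits only stay within $K+2\varepsilon_0$, which may well exceed the expansive constant of $f$; no contradiction follows. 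Manning, Hiraide, and Sumi instead use the Lefschetz fixed-point theorem: after passing to the orientation cover and invoking Proposition~\ref{propB Hiraide} on fixed-point indices, one gets $N(f^m)=|L(f^m)|=|L(A^m)|=\prod_i|1-\lambda_i^m|$ for large $m$, and this identity is incompatible with any $|\lambda_i|=1$ (Proposition~\ref{prop: hyperbolicitylinearization}). Your treatment of injectivity of $H$ is also too quick: ``expansivity of $\bar A$'' is circular (if $H(x)=H(y)$ then $\bar A^nH(x)=\bar A^nH(y)$ trivially), and bounding $d(f^nx,f^ny)$ via $H\simeq\id$ again only gives a constant that need not beat the expansive constant of $f$. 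The actual argument lifts to the universal cover and uses the global product structure of the stable and unstable (generalized) foliations of $\tilde f$, as in Proposition~\ref{prop:gproduct structure} and the lemmas following it.
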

		
	Theorem \ref{thm:HiraideNilmanifolds} generalizes the following result of Hiraide from tori to nilmanifolds.
	 	\begin{theorem}[\cite{Hiraide_tori}]\label{thm:Hiraide}
	 		An Anosov homeomorphism of a torus is topologically conjugate to a hyperbolic toral automorphism.
	 	\end{theorem}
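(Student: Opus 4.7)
The plan is to follow the structural blueprint that Franks and Manning used for Anosov diffeomorphisms of tori, substituting expansiveness and the shadowing property at every place the original argument invokes a $C^1$ hyperbolic splitting.

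Let $f\colon \T^n\to \T^n$ be an Anosov homeomorphism. The first step is to identify the candidate linear model: since $f$ is a homeomorphism, the induced map $f_*$ on $\pi_1(\T^n)\cong \Z^n$ is an automorphism, so it is realised by a matrix $A\in GL_n(\Z)$; write $L\colon \T^n\to \T^n$ for the corresponding toral automorphism. Fix a lift $\tilde f\colon \R^n\to \R^n$ of $f$, with $A$ regarded as its own lift. Since $\tilde f - A$ commutes with $\Z^n$-translations, it is bounded on $\R^n$, and the shadowing and expansiveness properties of $f$ lift to uniform analogues for $\tilde f$ on bounded pseudo-orbits (the shadowing constants are set on the compact quotient).

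The crux of the argument is to show that $A$ is hyperbolic. I would argue by contradiction: if $A$ has an eigenvalue of modulus $1$, then on some real $A$-invariant line $\ell\subset \R^n$, every $A$-orbit (forward and backward) stays uniformly bounded. For each $v\in \ell$ the sequence $\{A^k v\}_{k\in\Z}$ is a bounded pseudo-orbit for $\tilde f$ with error at most $\|\tilde f-A\|_\infty$, so applying shadowing yields an $\tilde f$-orbit tracking it within a fixed constant. As $v$ varies continuously along $\ell$, uniqueness forced by expansiveness makes the shadowing orbits vary continuously, producing a continuum of distinct points of $\T^n$ whose full $f$-orbits stay within any prescribed small distance, contradicting expansiveness.

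With hyperbolicity of $A$ in hand, the conjugacy is built directly from shadowing. For each $x\in \R^n$ the $A$-orbit $\{A^k x\}$ is a bounded pseudo-orbit for $\tilde f$, and shadowing produces a point $h(x)\in \R^n$ whose $\tilde f$-orbit stays within a fixed constant of $\{A^k x\}$; uniqueness of $h(x)$ comes from expansiveness. The resulting $h\colon \R^n\to \R^n$ satisfies $\tilde f\circ h=h\circ A$ and $\|h-\mathrm{id}\|_\infty<\infty$, hence descends to a continuous map $\bar h\colon \T^n\to \T^n$ homotopic to the identity with $f\circ \bar h=\bar h\circ L$. Running the symmetric construction with the roles of $\tilde f$ and $A$ interchanged produces a two-sided inverse on $\R^n$, so $\bar h$ is a homeomorphism.

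The main obstacle is the hyperbolicity step: without a $C^1$ hyperbolic splitting one cannot directly exclude a neutral eigenvalue, and the delicate bookkeeping is in transferring the obstruction produced in the universal cover back down to a genuine expansiveness violation on $\T^n$, since the shadowing and expansive constants are set on the compact quotient while the offending family of orbits lives on $\R^n$. The remaining steps, once hyperbolicity is secured, are comparatively mechanical consequences of shadowing plus expansiveness.
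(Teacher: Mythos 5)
Your setup --- building the integer matrix $A\in GL_n(\Z)$ from $f_*$ on $\pi_1(\T^n)$ and lifting to the universal cover --- matches the paper's argument, but both places where you substitute shadowing for the missing $C^1$ structure contain gaps that the paper avoids by a genuinely different route.

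\textbf{Hyperbolicity of $A$.} Your contradiction hinges on shadowing the bounded sequence $\{A^k v\}$ by an $\tilde f$-orbit. But the pseudo-orbit error is $\|\tilde f - A\|_\infty$, a fixed (possibly large) constant, whereas the shadowing property you have assumed for $f$ only guarantees $\e$-shadowing of $\delta$-pseudo-orbits for $\delta$ sufficiently small. For a $C^1$ Anosov diffeomorphism one has the stronger Anosov shadowing lemma in the universal cover (with shadowing distance scaling with $\delta$ for \emph{all} $\delta$), but for an abstract expansive-plus-shadowing homeomorphism this quantitative version is not available, so the step ``applying shadowing yields an $\tilde f$-orbit tracking it within a fixed constant'' is unjustified. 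Even if one grants it, the resulting orbits lie within $2K + \sup_n\|A^n(v-v')\|$ of each other, where $K$ is the (uncontrolled) shadowing distance, and this need not drop below the expansive constant. The paper's proof of Proposition~\ref{prop: hyperbolicitylinearization} takes a completely different route: after passing to a double cover to orient $\F_f^u$, it uses Proposition~\ref{propB Hiraide} on fixed-point indices of Anosov homeomorphisms plus the Lefschetz fixed-point theorem to obtain $N(f^m)=|L(A^m)|=\prod_i|1-\lambda_i^m|$ for large $m$, and then argues (following Manning and Hiraide) that this growth rate is incompatible with a unimodular eigenvalue. That index result is precisely the technical substitute for the hyperbolic splitting, and it cannot be replaced by a crude shadowing count.

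\textbf{Invertibility of $h$.} The statement ``running the symmetric construction \dots produces a two-sided inverse'' compresses what is in fact the bulk of Hiraide's argument. The forward map (your $h$ with $\tilde f\circ h=h\circ A$) has the same large-error shadowing problem as above; in the paper the semiconjugacy is instead obtained in the opposite direction, $A\circ h=h\circ f$, via Franks' theorem (Proposition~\ref{prop: semiconjugacy identity fixed pt}), which uses only hyperbolicity of $A$ through a contraction-mapping argument and needs nothing quantitative about $f$. More importantly, to conclude $h\circ g=\id$ from $h\circ g$ commuting with $\tilde f$ and being a bounded distance from the identity, you need the ``infinite expansiveness'' of $\tilde f$ in $\R^n$: that two $\tilde f$-orbits staying a bounded distance apart must coincide. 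For Anosov diffeomorphisms this is immediate from the global product structure of the lifted stable and unstable foliations; for Anosov homeomorphisms it is a theorem, and proving it is where the generalized-foliation theory, spectral decomposition, the one-fixed-point lemma (Lemma~\ref{lem:one fixed point}), the density of nonwandering points (Lemma~\ref{lem:nonwandering set whole manifold}), and the global product structure (Proposition~\ref{prop:gproduct structure}) all enter. Your characterization of these steps as ``comparatively mechanical consequences of shadowing plus expansiveness'' misplaces the difficulty: securing hyperbolicity of $A$ is actually the shorter half, and the injectivity of $h$ is the part Hiraide's machinery was built for.
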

	 The proof of Theorem \ref{thm:HiraideNilmanifolds} follows the same basic structure as Sumi's proof, and also the same structure as Hiraide's proof with some modifications to account for being on a nilmanifold instead of a torus.

\subsection{Notation and Preliminaries for the Proof of Theorem \ref{thm:HiraideNilmanifolds}}\label{subsec:preliminaries}

	This section recalls several properties of Anosov homeomorphisms that will be necessary to the proof of Theorem \ref{thm:HiraideNilmanifolds}. 
	
	We assume in the following that $M$ is a connected, closed $n$-dimensional Riemannian manifold. Let $d$ be the distance function on $M$ induced by the Riemannian metric.

\subsubsection{Generalized foliations for Anosov homeomorphisms}
	
	In this section, we describe an analogue of the stable manifold theorem for Anosov homeomorphisms that is due to Hiraide \cite{Hiraide_tori}. Let $f:M\to M$ be a homeomorphism. For each $x\in M$, we define the \textit{stable set} (resp. \textit{unstable set}) of $f$ at $x$ as
		\begin{align*}
				W^s(x) &:= \left\{ y\in X: d(f^n(x),f^n(y)) \overset{n\to \infty}{\longrightarrow} 0 \right\}\\
				&\left( \text{resp. } 
						W^u(x) := \left\{ y\in X: d(f^{-n}(x),f^{-n}(y))\overset{n\to \infty}{\longrightarrow} 0 \right\}\right)
		\end{align*}
	The collection of stable (resp. unstable) sets for $f$, which we'll denote by $\F^s_f$ (resp. $\F_f^u$), gives a $f$-invariant decomposition of $M$. The stable manifold theorem states that when $f$ is an Anosov diffeomorphism, these collections form foliations. When $f$ is an Anosov homeomorphism, we get the following analogue.
	
		\begin{theorem}[{\cite[Proposition A]{Hiraide_tori}}]\label{propA Hiraide}
			If $f:M\to M$ is an Anosov homeomorphism of the closed manifold $M$, then the collections
				$$
					\F_f^\sigma=\{ W^\sigma(x): \ x\in M \}, \quad \sigma\in \{s,u\}
				$$
			are transverse generalized foliations of $M$.
		\end{theorem}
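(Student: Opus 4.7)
The plan is to adapt Hiraide's original argument for the torus, which rests entirely on expansiveness and shadowing rather than on any ambient group structure, so the same strategy goes through on a general closed manifold $M$. First, I would fix an expansive constant $c > 0$ for $f$, choose $\varepsilon \in (0, c/2)$, and define the local stable and unstable sets
\[
W^s_\varepsilon(x) := \{y \in M : d(f^n(x), f^n(y)) \leq \varepsilon \text{ for all } n \geq 0\},
\]
\[
W^u_\varepsilon(x) := \{y \in M : d(f^{-n}(x), f^{-n}(y)) \leq \varepsilon \text{ for all } n \geq 0\}.
\]
Using the shadowing property, I would pick $\delta > 0$ so that every $\delta$-pseudo-orbit is $\varepsilon$-shadowed. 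For $x, y \in M$ sufficiently close, the bi-infinite sequence obtained by gluing the backward orbit of $x$ to the forward orbit of $y$ is a $\delta$-pseudo-orbit; its shadowing point $z$ is unique by expansiveness and lies in $W^u_\varepsilon(x) \cap W^s_\varepsilon(y)$. This defines a continuous bracket $[x,y] := W^u_\varepsilon(x) \cap W^s_\varepsilon(y)$ on a neighborhood of the diagonal, giving a canonical local product structure.

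Second, I would show that $(y,z) \mapsto [y,z]$ is a local homeomorphism from $W^s_\varepsilon(x) \times W^u_\varepsilon(x)$ onto a neighborhood of $x$ in $M$: continuity is built into the shadowing construction, injectivity is immediate from expansiveness, and openness will follow from invariance of domain once the two factors are known to be topological manifolds. Globally, by expansiveness one has $W^s(x) = \bigcup_{n \geq 0} f^{-n}(W^s_\varepsilon(f^n(x)))$ and symmetrically for $W^u(x)$, so each global set inherits a coherent local product structure by $f$-iteration. Transversality of $\F^s_f$ and $\F^u_f$ is then immediate from the bracket being a local chart for $M$ near each point, and $f$-invariance of the decompositions is immediate from the definitions.

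The main obstacle is verifying that the local sets $W^s_\varepsilon(x)$ and $W^u_\varepsilon(x)$ are topological manifolds whose dimensions sum to $n = \dim M$, so that the decompositions $\F^s_f$ and $\F^u_f$ are genuine generalized foliations in Hiraide's sense. For this step I would reproduce Hiraide's topological-dimension arguments verbatim: invariance of domain applied to the product map $[\cdot,\cdot]$ forces the two local topological dimensions to sum to $n$, while a further dimension-theoretic and ANR argument forces each factor to be a topological disk of the appropriate dimension. Crucially, these arguments use only the product-chart structure produced by shadowing and expansiveness, so they transfer from the torus setting to an arbitrary closed manifold without modification. The rest of the proof is careful bookkeeping of $\varepsilon, \delta$ constants to ensure that the leaves vary continuously in the $C^0$-sense required of a generalized foliation.
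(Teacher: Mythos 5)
The paper does not prove this statement; it is cited verbatim as Hiraide's Proposition~A (\cite[Proposition A]{Hiraide\_tori}) and the surrounding discussion in Section~\ref{subsec:preliminaries} is devoted precisely to explaining why the conclusion is a \emph{generalized} foliation rather than an ordinary one. That discussion is directly at odds with the core of your proposal. You identify as the ``main obstacle'' the task of showing that $W^s_\varepsilon(x)$ and $W^u_\varepsilon(x)$ are topological manifolds (indeed topological disks) whose dimensions add to $\dim M$. But the entire reason Hiraide introduced generalized foliations is that this claim is \emph{not available}: the definition of a generalized foliation deliberately drops the requirement that the factors $D_x$, $K_x$ of the local product chart be manifolds, and the paper says so explicitly. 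One cannot conclude that the factors of a product which is a manifold are themselves manifolds --- the Bing dogbone space, whose product with $\R$ is $\R^4$, is the standard counterexample --- and no amount of ``dimension-theoretic and ANR argument'' recovers that conclusion. What one can extract from the local product structure is the much weaker statement (recorded in the paper as Proposition~\ref{prop:gfoliation homology}) that the leaves are homology manifolds.

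Because you are aiming at the wrong target, the parts of the argument that actually need proving are left unaddressed. The shadowing/expansiveness construction of the bracket $[x,y]$ and the use of invariance of domain to see that $\phi_x\colon W^u_\varepsilon(x)\times W^s_\varepsilon(x)\to M$ is a local homeomorphism onto a neighborhood of $x$ are fine and are indeed the right starting point. But to verify that $\F^s_f$ and $\F^u_f$ are transverse generalized foliations in the sense used in this paper you must still check: (i) that each global leaf $W^\sigma(x)$ is path-connected (this uses the local product structure together with $f$-iteration and is not automatic); (ii) the countability condition, namely that each leaf $L$ meets a coordinate domain $N_x$ in a set of the form $\phi_x(D_x\times B)$ with $B$ at most countable; and (iii) that the charts around different base points are compatible in the sense required. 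None of these requires, or should rely on, the factors being manifolds. So the proposal would need to be reoriented: drop the attempt to prove the factors are disks, keep the bracket and invariance-of-domain argument, and instead carry out the path-connectedness and countability verifications as in \cite[\S\S2--4]{Hiraide\_tori}.
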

		
		\begin{remark}
			When $f$ is the projection of a fibered partially hyperbolic diffeomorphism $\hat{f}$, the transverse generalized foliations $W^s$ and $W^u$ are, in fact, foliations. They are the projections of the foliations $\widehat{W^s}$ and $\widehat{W^u}$ for $\hat{f}$.
		\end{remark}
		
	Generalized foliations are a generalization of foliations given by weakening the condition that the leaves be manifolds. More precisely, a collection $\F$ of subsets of $M$ is a \textit{generalized foliation} of $M$ if the following properties hold:
		\begin{enumerate}
			\item
				$\F$ is a partition of $M$.
			\item
				Each $L\in \F$ (called a \textit{leaf}) is path-connected.
			\item
				For each $x\in M$, there exist
					\begin{itemize}
					\item 
						nontrivial, connected subsets $D_x,K_x\subset M$ with $D_x\cap K_x=\{x\}$,
					\item 
						a connected, open neighborhood $N_x\subset M$ of $x$,
					\item
						a homeomorphism $\phi_x:D_x\times K_x\to N_x$ (called \textit{local coordinates around $x$})
					\end{itemize}
				such that 
					\begin{enumerate}
						\item $\phi_x(x,x)=x$,
						\item $\phi_x(y,x)=y \ \forall y\in D_x$ and $\phi_x(x,z)=z \ \forall z\in K_x$,
						\item For any $L\in \F$, there is at most a countable set $B\subset K_x$ such that $N_x\cap L=\phi_x(D_x\times B)$.
					\end{enumerate}
		\end{enumerate}
	Two generalized foliations $\F$ and $\F'$ on $M$ are \textit{transverse} if, for each $x\in M$, there exist
			\begin{itemize}
				\item 
					nontrivial, connected subsets $D_x,D'_x\subset M$ with $D_x\cap D_x'=\{x\}$,
				\item	
					a connected, open neighborhood $N_x$ of $x$ (called \textit{a coordinate domain}),
				\item
					a homeomorphism $\phi_x:D_x\times D'_x\to N_x$ (called a \textit{canonical coordinate chart (around $x$)}),
			\end{itemize}
		such that 
			\begin{enumerate}[(a)]
				\item $\phi_x(x,x)=x$,
				\item $\phi_x(y,x)=y, \ \forall y\in D_x$, and $\phi_x(x,z)=z \ \forall z\in D'_x$,
				\item for any $L\in \F$, there is at most a countable set $B'\subset D'_x$ such that $N_x\cap L=\phi_x(D_x\times B')$,
				\item for any $L'\in \F'$, there is at most a countable set $B\subset D_x$ such that $N_x\cap L'=\phi_x(B\times D'_x)$.
			\end{enumerate}
	Note that the sole difference between the definitions of a foliation and of a generalized foliation is we don't require the sets $D_x$ and $K_x$ to be manifolds in the definition of a generalized foliation. (If $D_x$ and $K_x$ are manifolds for all $x\in M$, then a generalized foliation $\F$ is, in fact, a topological foliation of $M$.) While the sets $D_x$ and $K_x$ may fail to be manifolds, the fact that their product, $D_x\times K_x$, is a manifold  significantly restricts the ways in which $D_x$ and $K_x$ can fail to be manifolds. In other words, $D_x$ and $K_x$ (and therefore the leaves of $\F$), while not necessarily manifolds themselves, will behave like manifolds in many ways. In fact, the leaves of a generalized foliation are homology manifolds (also known as generalized manifolds). An homology manifold is a topological space that looks like a manifold under homology. This is stated precisely in the following proposition.
		
		\begin{prop}[{\cite[Lemma 4.2]{Hiraide_tori}}]\label{prop:gfoliation homology}
			Let $\F$ be a generalized foliation on a connected manifold without boundary. There exists $0<p<\dim(M)$ such that any leaf $L\in \F$ and $x\in L$, the relative homology groups, $H_*(L,L\setminus \{x\})$, are given by 
				$$
					H_i(L,L\setminus \{x\})=
					\begin{cases}
						\Z, & \text{ if } i=p\\
						0, & \text{ if } i\neq p
					\end{cases}.
				$$
		\end{prop}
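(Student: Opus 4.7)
The plan is to combine the local product structure of the generalized foliation with the K\"unneth formula for pairs, and then to use the ambient manifold structure of $M$ to deduce that each leaf is a homology manifold of a common dimension.

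First, excision reduces the computation to an arbitrarily small neighborhood of $x$ in $L$. Inside a canonical chart $\phi_x:D_x\times K_x\to N_x$, property (c) of a generalized foliation tells us that the leaf $L$ meets $N_x$ in a countable union of parallel slices, with the slice through $x$ being $\phi_x(D_x\times\{x\})$. Excision therefore gives $H_*(L,L\setminus\{x\})\cong H_*(D_x,D_x\setminus\{x\})$. Because $\phi_x$ is a homeomorphism onto an open subset of the $n$-manifold $M$ (with $n=\dim M$), the product $D_x\times K_x$ is itself an $n$-manifold, so its local homology at $(x,x)$ is $\Z$ in degree $n$ and $0$ otherwise. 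Setting $A_j:=H_j(D_x,D_x\setminus\{x\})$ and $B_k:=H_k(K_x,K_x\setminus\{x\})$, the product-pair identity $(D_x,D_x\setminus\{x\})\times(K_x,K_x\setminus\{x\})=(D_x\times K_x,(D_x\times K_x)\setminus\{(x,x)\})$ combined with the relative K\"unneth formula yields
\[
\bigoplus_{j+k=i}A_j\otimes B_k\ \oplus\ \bigoplus_{j+k=i-1}\mathrm{Tor}(A_j,B_k)=\begin{cases}\Z & i=n,\\ 0 & i\neq n.\end{cases}
\]

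The main obstacle I expect is the purely algebraic step of extracting from this identity that the $A_j$ and $B_k$ are each concentrated in a single degree with value $\Z$. Passing to rational coefficients kills the Tor summands and gives $P_A(t)P_B(t)=t^n$ in Poincar\'e polynomials, producing unique $p,q$ with $p+q=n$, $\mathrm{rank}\,A_p=\mathrm{rank}\,B_q=1$, and the remaining $A_j,B_k$ all torsion. Back over $\Z$, the identity forces $A_j\otimes B_k=0$ whenever $j+k\neq n$; combined with the surjections $A_p\twoheadrightarrow\Z$ and $B_q\twoheadrightarrow\Z$ this kills $A_j$ for $j\neq p$ and $B_k$ for $k\neq q$. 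With the off-diagonal groups gone, the Tor contribution in the $i=n$ identity also vanishes, leaving $A_p\otimes B_q=\Z$; writing $A_p=\Z\oplus T_p$ and $B_q=\Z\oplus T_q$ with finite $T_p,T_q$ then forces $T_p=T_q=0$. The nontriviality of $D_x$ and $K_x$ built into the definition of a generalized foliation ensures $0<p<n$.

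It remains to show that the integer $p=p(x)$ is independent of $x$ and of the leaf. Running the same K\"unneth/algebraic argument at any base point $(d,k)\in D_x\times K_x$ shows that the local homology dimension of $D_x$ at $d$ plus that of $K_x$ at $k$ equals $n$; fixing one factor shows each is constant on the other, so $p$ is constant throughout the coordinate domain $N_x$. On an overlap $N_{x_1}\cap N_{x_2}$ the two values of $p$ must coincide, since each computes $H_*(L,L\setminus\{y\})$ for a common leaf $L$ through a common point $y$ of the overlap. Connectedness of $M$ then makes $p$ globally constant.
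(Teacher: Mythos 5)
The paper does not give its own proof of this proposition; it simply cites Lemma~4.2 of Hiraide. Your argument is the natural one via the local product structure and the relative K\"unneth theorem, and I believe it is essentially the argument in the cited source, so the main comments are about internal soundness rather than about a divergence from the paper.

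Two points deserve attention. First, the excision step $H_*(L,L\setminus\{x\})\cong H_*(D_x,D_x\setminus\{x\})$ tacitly requires that the plaque $\phi_x(D_x\times\{x\})$ be \emph{open in $L$}; this is only true for the intrinsic leaf topology, not the subspace topology (for a dense leaf the plaque is not open as a subset of $M$). The proposition is only sensible with the leaf topology, so this is fine, but it is worth saying. Second, and more substantively, the final integral step has a gap: you write $A_p=\Z\oplus T_p$ with $T_p$ finite, but having $A_p\otimes\Q\cong\Q$ does not by itself force $A_p$ to split as $\Z$ plus finite torsion (e.g.\ $A_p=\Q$, or $\Z\oplus\Z[1/2]/\Z$, are rank one without such a splitting). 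The cleaner route is to also read off the K\"unneth identity in degree $n+1$: since all off-diagonal $A_j$ and $B_k$ have already been shown to vanish, the $i=n+1$ term reduces to $\mathrm{Tor}(A_p,B_q)=0$. By left-exactness of $\mathrm{Tor}$ in each variable, a nonzero finite cyclic subgroup of $A_p$ would inject its $\mathrm{Tor}$ with $B_q$ into $\mathrm{Tor}(A_p,B_q)$, so $B_q$ (and symmetrically $A_p$) must be torsion-free; being rank one and torsion-free they are subgroups of $\Q$, and $A_p\otimes B_q\cong\Z$ then rules out any divisibility, forcing $A_p\cong B_q\cong\Z$. With that repair, the rationalisation step, the elimination of off-degree groups via the surjections onto $\Z$, the bound $0<p<n$ from connectedness and nontriviality of $D_x,K_x$, and the local-then-global constancy argument on overlaps are all sound.
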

	This proposition allows us to define a notion of dimension for generalized foliation. If $\F$ is a generalized foliation of $M$, then the integer $p$ from Proposition \ref{prop:gfoliation homology} is called the \textit{dimension} of $f$. Proposition \ref{prop:gfoliation homology} also allows us to define orientability for generalized foliations. A $p$-dimensional generalized foliation is said to be \textit{orientable} if there is a `locally consistent' choice of generators for the groups $H_p(L,L\setminus \{x\})$, $L\in \F$ and $x\in L$. For more details see \cite[Section 4]{Hiraide_tori}.

\subsubsection{Lifts of stable and unstable sets}\label{subsect:lifts}

	Much of Franks' and Manning's proofs of Theorem \ref{thm:FranksManning} take place using maps lifted to the universal cover. These arguments exploit the facts that Anosov diffeomorphisms lift to Anosov diffeomorphisms whose stable and unstable sets are lifts of the original stable and unstable manifolds. We'll now give versions of these facts for Anosov homeomorphisms, which will be used in our proof of Theorem \ref{thm:HiraideNilmanifolds}. 
	
	We begin with the following set-up. Let $M$ be a closed Riemannian manifold and let $p:\tilde M\to M$ be a smooth covering map for $M$. By lifting the Riemannian metric on $M$, we see that $\tilde M$ is a complete Riemannian manifold. 
	
	We can now generalize the previous results about lifts of Anosov diffeomorphisms to Anosov homeomorphisms. These generalizations are due to Hiraide. For details on them and their proofs, see \cite[Section 3]{Hiraide_tori}. Let $f:M\to M$ be an Anosov homeomorphism. The map $f$ lifts to a homeomorphism $\tilde f:\tilde M\to \tilde M$. Just as Anosov diffeomorphisms lift to Anosov diffeomorphisms, we observe that an Anosov homeomorphism lifts to an Anosov homeomorphism. 
	
	Next, we'll discuss the relationship between the stable and unstable sets of $f$ and $\tilde f$. For $\tilde x\in \tilde M$ and $\varepsilon>0$, we let $\tilde W_\varepsilon^s(\tilde x)$ and $\tilde W_\varepsilon^u(\tilde x)$ be the local stable and unstable sets of $\tilde f$ at $\tilde x$. 
		\begin{align*}
				\tilde W_\varepsilon^s(\tilde x)
			&= 
				\left\{ y\in \tilde M : \ d\left( \tilde f^n\left( \tilde x\right), \tilde f^n \left(y \right)\right)\leq \varepsilon, \ \forall n\geq 0 \right\}, 
			\\
				\tilde W_\varepsilon^u(\tilde x)
			&= 
				\left\{ y\in \tilde M : \ d\left( \tilde f^{-n}\left( \tilde x\right), \tilde f^{-n} \left(y \right)\right)\leq \varepsilon, \ \forall n\geq 0 \right\}.			
		\end{align*}
	Just as for an Anosov diffeomorphism, the stable and unstable sets for $\tilde f$ project down to the stable and unstable sets for $f$. In fact, locally this projection is an isometry. In addition, the collection of stable (resp.) unstable sets of $\tilde f$ forms a generalized foliation, denoted $\F_{\tilde f}^s$ (resp. $\F_{\tilde f}^u$), and that the stable and unstable generalized foliations for $\tilde f$ are transverse. 
	
\subsubsection{Indices of fixed points}

	Let $f:M\to M$ be an Anosov diffeomorphism. The index of $f$ at any fixed point $x$, denoted $\Ind_x(f)$, will be either $\pm 1$ since $d_xf:T_xM\to T_xM$ is hyperbolic. The sign of $\Ind_x(f)$ will depend on the orientation of the stable and unstable subspaces, $E^s_x$ and $E^u_x$, at $x$. Thus, if the unstable bundle, $E^u$, of $f$ is orientable  (which implies that the unstable foliation for $f$ is orientable), we can make the fixed point index globally constant, i.e. for all $x,x'\in \Fix(f)$, $\Ind_{x}(f)=\Ind_{x'}(f)$. This along with the Lefschetz fixed point theorem tells us that the absolute value of the Lefschetz number of $f$, denoted $L(f)$, is equal to the number of fixed points of $f$. This fact is relied upon in the proof of Theorem \ref{thm:FranksManning}.
	
	The purpose of this section is to give the following similar result about the fixed point index of an Anosov homeomorphism, which will allow us to use the Lefschetz number to count fixed points. Note that we can define the fixed point index for a fixed point of an Anosov homeomorphism because all fixed points of an Anosov homeomorphism are isolated by expansivity.
	
	\begin{prop}[\cite{Hiraide_tori}, Proposition B]\label{propB Hiraide}
		Let $f:M\to M$ be an Anosov homeomorphism of the closed manifold $M$. If the generalized unstable foliation $\F_f^u$ is orientable, then for sufficiently large $m$, all the fixed points of $f^m$ have the same index, which is either $1$ or $-1$. 
	\end{prop}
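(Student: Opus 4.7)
My plan is to exploit the local product structure coming from the transverse generalized foliations $\F^s_f$ and $\F^u_f$ (Theorem \ref{propA Hiraide}) to compute the fixed point index at each periodic point as a product of a stable and an unstable contribution, and then to use orientability to make the unstable contribution globally constant after passing to a sufficiently large iterate. Since $f$ is expansive, periodic points of any fixed period are isolated, so fixed point indices are well-defined integers, and one may apply the local Lefschetz/index machinery on small neighborhoods.

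Fix $m \geq 1$ and a fixed point $x$ of $f^m$. Choose canonical coordinates $\phi_x \colon D_x \times D'_x \to N_x$ around $x$ with $D_x \subset W^u(x)$ and $D'_x \subset W^s(x)$. Shrinking if necessary, I would show that for $m$ large enough $f^m$ preserves this local product decomposition: this uses the fact that $f$ preserves each of $\F^s_f$ and $\F^u_f$ together with the contraction of stable plaques and expansion of unstable plaques under iterates. Multiplicativity of the fixed point index for maps respecting a product decomposition then gives
$$
\Ind_x(f^m) \;=\; \Ind_x\!\bigl(f^m|_{W^s(x)}\bigr) \cdot \Ind_x\!\bigl(f^m|_{W^u(x)}\bigr).
$$
The stable factor is a contraction of $D'_x$ to $x$, and since $D'_x$ is contractible, a local Lefschetz count (using that $D'_x$ is a homology manifold by Proposition \ref{prop:gfoliation homology}) gives $\Ind_x(f^m|_{W^s(x)}) = 1$.

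For the unstable factor, choose $m$ large enough that $f^m|_{W^u(x)}$ stretches $D_x$ across a neighborhood containing its closure. The local index is then the local Brouwer degree of $f^m$ as a self-homeomorphism of $W^u(x)$ at $x$, and on a $p^u$-dimensional homology manifold this degree is $\pm 1$, with sign determined by whether $f^m$ locally preserves or reverses the orientation of $\F^u_f$ at $x$. Here the orientability hypothesis is crucial: it provides a globally consistent choice of generators of $H_{p^u}(L, L \setminus \{y\})$ on leaves $L \in \F^u_f$, so $f$ either globally preserves or globally reverses this orientation. Replacing $m$ by $2m$ if necessary, I may assume $f^m$ preserves it; then for every fixed point $x$ of $f^m$ the unstable factor contributes the same sign, and $\Ind_x(f^m)$ takes a common value in $\{+1,-1\}$.

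The main obstacle is making the product-decomposition and local-degree steps rigorous in the generalized-foliation setting, since neither $W^s(x)$ nor $W^u(x)$ is a priori a topological manifold — only a homology manifold. One must invoke local-degree and Lefschetz machinery valid for homology manifolds (via \v{C}ech cohomology or the Dold fixed point index on ANRs) rather than the smooth or PL versions, and verify carefully that the canonical coordinates can be chosen so that $f^m$ genuinely splits as a product of maps on $D_x$ and $D'_x$; this in turn uses expansivity and the shadowing property to align $f^m$-images of local stable and unstable plaques back into the chart.
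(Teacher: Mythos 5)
The paper itself does not prove this proposition; it is attributed to Hiraide and the reader is directed to \cite[Section 5]{Hiraide_tori}, so there is no in-paper argument to measure your sketch against. Your outline follows the standard Franks--Hiraide strategy (canonical coordinates supplied by the transverse generalized foliations, a stable--unstable factorization of the local index, a $+1$ from the contracting factor, and orientability of $\F^u_f$, after possibly doubling $m$, to pin down the expanding factor's sign globally), and you correctly flag that the leaves are a priori only homology manifolds, so the local-degree and Lefschetz machinery must be the ANR/homology-manifold version.

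The genuine gap is in the factorization step. You propose to ``verify carefully that the canonical coordinates can be chosen so that $f^m$ genuinely splits as a product of maps on $D_x$ and $D'_x$.'' That cannot be arranged: a homeomorphism that sends stable plaques into stable plaques and stretches unstable plaques across the chart is not thereby a product map, and ``multiplicativity of the fixed point index for maps respecting a product decomposition'' is not an off-the-shelf lemma. What is actually required is an admissible homotopy from $\phi_x^{-1}\circ f^m\circ \phi_x$ (restricted to a suitable compact neighborhood) to the honest product $(u,s)\mapsto\left(\pi_u\,\phi_x^{-1} f^m \phi_x(u,x),\ \pi_s\,\phi_x^{-1} f^m \phi_x(x,s)\right)$ during which $x$ remains the unique fixed point on that neighborhood; the uniform contraction along $D'_x$ and expansion across $D_x$ for $m$ large are precisely what keep the homotopy compactly fixed. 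Only then does multiplicativity apply (to the product map at the end) and homotopy invariance carry the conclusion back to $f^m$. That homotopy, together with the homology-manifold version of the local degree you gesture at, is the substantive content of Hiraide's Proposition B; as written, your plan treats it as a definitional checkpoint rather than the heart of the argument.
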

	
	Note that the assumption in this proposition (i.e. that the generalized unstable foliation be orientable) is analogous to the assumption we made in the Anosov case. For the definition of orientability for a generalized foliation, see \cite{Hiraide_tori}. The proof of Theorem \ref{propB Hiraide} can be found in \cite[Section 5]{Hiraide_tori}. 
	
\subsubsection{The spectral decomposition}

	The spectral decomposition is a useful tool for decomposing the non-wandering set of an Anosov diffeomorphism into smaller invariant sets. Recall that given a homeomorphism, $f:M\to M$, a point $x\in M$ is called \textit{nonwandering} if for any neighborhood $U$ of $x$, $\exists n\geq 1$ such that $f^n(U)\cap U\neq \emptyset$. The nonwandering set of $f$, denoted $\Omega(f)$ is the set of nonwandering points of $f$. The spectral decomposition admits the following generalization to Anosov homeomorphisms.
	
	\begin{theorem}[Spectral Decomposition, \cite{Aoki}]\label{thm:spectral decomp homeos}
		Let $f:M\to M$ be an Anosov homeomorphism of a compact manifold $M$. Then, there exist closed, pairwise disjoint sets $X_1,...,X_k$ and a permutation $\sigma\in S_k$ such that 
		\begin{enumerate}[(a)]
			\item $\Omega(f)=\bigcap_{i=1}^k X_i$,
			\item $f(X_i)=X_{\sigma(i)}$, and 
			\item if for $a>0$, $\sigma^a(i)=i$, then $f^a|_{X_i}$ is topologically mixing.
		\end{enumerate}
	\end{theorem}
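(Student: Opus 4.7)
The plan is to adapt Smale's classical spectral decomposition to the setting of Anosov homeomorphisms, substituting the smooth invariant manifold theory with the shadowing property and expansivity (which together constitute the definition of an Anosov homeomorphism), and using the transverse generalized foliations $\mathscr{F}^s_f$ and $\mathscr{F}^u_f$ provided by Theorem \ref{propA Hiraide}.

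The first step is to prove an Anosov closing lemma. Given $x \in \Omega(f)$ and $\varepsilon$ less than half the expansive constant, choose $\delta>0$ from the shadowing property. Since $x$ is nonwandering there exist $y$ close to $x$ and $n \geq 1$ with $f^n(y)$ close to $x$, yielding a periodic $\delta$-pseudo-orbit of period $n$. Shadowing produces a point $z$ that $\varepsilon$-tracks this pseudo-orbit, and expansivity (applied to $z$ and $f^n(z)$, which shadow the same orbit) forces $f^n(z)=z$. Thus periodic points are dense in $\Omega(f)$.

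Next I would establish a local product structure: for $x,y \in \Omega(f)$ sufficiently close, the transversality of $\mathscr{F}^s_f$ and $\mathscr{F}^u_f$ gives a unique intersection $\tilde W^s_\varepsilon(x)\cap \tilde W^u_\varepsilon(y)$, which itself lies in $\Omega(f)$ via shadowing of a pseudo-orbit that alternates between orbits of $x$ and of $y$. Following Smale, define an equivalence relation on periodic points by $p \sim q$ iff $W^u(p)\cap W^s(q)\neq \emptyset$ and $W^s(p)\cap W^u(q)\neq \emptyset$. The basic sets $X_1,\ldots,X_k$ are the closures of these equivalence classes. The local product structure shows that each class is open in $\Omega(f)$, so by compactness there are finitely many, and their closures cover $\Omega(f)$. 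Property (b), that $f$ permutes the $X_i$, is immediate from the $f$-invariance of $\mathscr{F}^s_f$ and $\mathscr{F}^u_f$.

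The main obstacle is proving topological mixing (c). Assume $\sigma^a(i)=i$ and fix nonempty open $U,V \subset X_i$; one must show $f^{an}(U)\cap V \neq \emptyset$ for all sufficiently large $n$. Using density of periodic points, choose periodic $p \in U$ and $q \in V$, and pick heteroclinic points $r \in W^u(p)\cap W^s(q)$ and $s \in W^s(p)\cap W^u(q)$. Concatenating long forward and backward orbit segments near $p$, $r$, $q$, $s$ yields $\delta$-pseudo-orbits of every sufficiently large length $an$ that begin in $U$ and end in $V$; shadowing then produces genuine orbits witnessing the mixing. The delicate point is realizing \emph{every} large $n$ rather than an arithmetic progression of them: this requires that the choice of heteroclinic segments be compatible with the cycle structure of $\sigma$, and is where the argument departs most substantively from routine bookkeeping. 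This is where I expect the principal technical difficulty to lie.
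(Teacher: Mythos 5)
The paper does not prove this statement; it is quoted as a known result and attributed to Aoki. So there is no in-paper argument to compare against, and the right question is whether your sketch is correct on its own terms.

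Your overall strategy — shadowing $\Rightarrow$ closing lemma $\Rightarrow$ density of periodic points in $\Omega(f)$, then local product structure from the transverse generalized foliations of Theorem~\ref{propA Hiraide}, then a Smale-style decomposition — is the right one, and the first two steps are fine. The genuine gap is in what sets you end up with. The symmetric heteroclinic relation $p\sim q$ iff $W^u(p)\cap W^s(q)\neq\emptyset$ \emph{and} $W^s(p)\cap W^u(q)\neq\emptyset$ produces Smale's basic sets, which are $f$-\emph{invariant} and on which $f$ is merely topologically \emph{transitive}, not mixing. Those are not the $X_i$ of the statement: note that the theorem only asserts $f(X_i)=X_{\sigma(i)}$ for a nontrivial permutation $\sigma$, which would be vacuous if each $X_i$ were already $f$-invariant. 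The $X_i$ are the finer cyclic pieces: each transitive basic set $\Lambda$ splits as $\Lambda=X_{i_1}\cup\cdots\cup X_{i_m}$ with $f$ cycling them, and it is $f^m$ on one cyclic piece that is mixing. The difficulty you flag in your last paragraph — that the shadowing construction only hits an arithmetic progression of return times — is exactly this phenomenon; it is not a bookkeeping wrinkle to be patched but the obstruction that forces the further decomposition. The standard remedy is to build the $X_i$ directly: inside a transitive basic set $\Lambda$ take a periodic point $p$, set $X_1=\overline{W^u(p)\cap\Lambda}$ and $X_{j+1}=f(X_j)$; these stabilize after $m$ steps with $m$ dividing the period of $p$ and independent of the choice of $p$, and $f^m|_{X_1}$ is mixing because the dense unstable leaf of $p$ under $f^m$ returns to every open set for \emph{all} large times, not just a subsequence. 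Without this adjustment your $X_i$ satisfy (a) and (b) (with $\sigma=\mathrm{id}$) but not (c).
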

	
	Recall that a continuous map $f:M\to M$ is \textit{topologically mixing} if for any open sets $U,V\subset M$, there exists an integer $N$ such that $f^n(U)\cap V\neq \emptyset$ for all $n\geq N$.

\subsection{Proof of Theorem \ref{thm:HiraideNilmanifolds}}

	The proof of Theorem \ref{thm:HiraideNilmanifolds} follows the same structure as the proofs of the main result of \cite{Hiraide_tori} with a couple of modifications to account for being on a nilmanifold instead of a torus. Before giving the details of the proof, we provide a brief synopsis of the proof and note where it differs from Hiraide's. The argument has three main parts.
	
		\begin{itemize}
		\item 
			Constructing the hyperbolic nilmanifold automorphism $A:M\to M$. This differs from Hiraide's argument \cite{Hiraide_tori} in the same ways that Manning's argument \cite{Manning_conjugacy} differs from Franks's \cite{Franks_Anosov_tori}. The construction of the nilmanifold automorphism $A:M\to M$ is the same as Manning's construction in \cite{Manning_conjugacy}. The proof that $A$ is hyperbolic follows Hiraide's argument using the same technique that Manning uses in \cite{Manning_nilmanifolds} and \cite{Manning_conjugacy} to get a formula for the Lefschitz number of $A$ in terms of the eigenvalues of $A$. 
			
		\item
			Constructing a semiconjugacy $h:M\to M$ between $A$ and $f$. Since $M$ is a $K(\pi,1)$, this construction is the same as that on the torus.
			
		\item
			Proving that the semiconjugacy $h:M\to M$ is actually a conjugacy. This follows the same argument given in Hiraide, with the main modification in Lemma \ref{lem:one fixed point} to construct a homotopy between $\tilde f$ and $\tilde A$ that doesn't introduce fixed points outside a compact set.
		\end{itemize}
		
	Now, we give the details of the proof. Let $f:M\to M$ be an Anosov homeomorphism of the nilmanifold $M=N/\Gamma$. We begin by finding a candidate for the hyperbolic nilmanifold automorphism in Theorem \ref{thm:HiraideNilmanifolds}. We'll do this following the same procedure as Franks \cite{Franks_Anosov_tori}, Manning \cite{Manning_conjugacy}, and Hiraide \cite{Hiraide_tori}; we'll find a `linear' model of $f$, which we'll then show is hyperbolic.  Our linear model of $f$ will be a nilmanifold automorphism $A$ that is homotopic to $f$. The construction of this linear model is identical to that given in \cite{Manning_conjugacy}. To construct $A$, we'll show that the induced action of $f$ on $\pi_1(M,e\Gamma)$ can be lifted to an automorphism of $\Gamma$. We'll then extend this automorphism to all of $N$ to get our linear model. 
		
	Let $f_*:\pi_1(M,e\Gamma)\to \pi_1(M,f(e\Gamma))$ be the homomorphism that $f$ induces on the fundamental group of $M$. We can view $\pi_1(M,e\Gamma)$ and $\pi_1(M,f(e\Gamma))$ as subgroups of $N$. To do this, we first identify $\pi_1(M,e)$ with $\G$ (via the endpoints of the lifts of the loops in the fundamental group). Recall that changing basepoint in the fundamental group is the same as conjugating by some path in $M$. So in the universal cover of $M$ (i.e. $N$), the identification that takes $\pi_1(M,e\Gamma)$ to $\Gamma$ will take $\pi_1(M,f(e\Gamma))$ to $x^{-1}\Gamma x$ for some $x\in N$. By lifting to $N$, we can view $f_*$ as a homomorphism $\Gamma\to x^{-1}\Gamma x$. 
		
	Since we want a homomorphism $\Gamma\to \Gamma$, we compose $f_*$ with conjugation by $x^{-1}$, which gives us our automorphism of $\Gamma$. To summarize, we've shown that we can lift $f_*:\pi_1(M,e\Gamma)\to \pi_1(M,f(e\Gamma))$ to an automorphism of $\Gamma$, which is defined up to an inner automorphism of $N$. We can uniquely extend $f_*:\Gamma\to \Gamma$ to an automorphism $\tilde A:N\to N$ \cite[Corollary 1 of Theorem 2.11]{Raghunathan}. Since $\tilde A$ preserves $\Gamma$, it descends to a nilmanifold automorphism, $A:M\to M$. Note that $f$ is homotopic to $A$ since they induce conjugate maps on $\pi_1(M)$ and $M$ is a $K(\pi,1)$. 
		
	We now claim that the linearization $A$ is hyperbolic, which follows immediately from the following proposition.
	
	\begin{prop}\label{prop: hyperbolicitylinearization}
		Let $f:M\to M$ be an Anosov homeomorphism of a nilmanifold $M=N/\G$. If $A:M\to M$ is a nilmanifold automorphism that is homotopic to $f$, then $A$ is hyperbolic. 
	\end{prop}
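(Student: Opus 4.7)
The strategy is to compare periodic point counts of $f^m$ and $A^m$ using the Lefschetz fixed point theorem, following Hiraide's argument in the toral case equipped with Manning's Lefschetz formula for nilmanifold automorphisms.

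First I would establish the Lefschetz formula for $A$. By Nomizu's theorem, $H^*(M;\R)$ is isomorphic to the exterior algebra $\Lambda^*\mathfrak{n}^*$, and the action of $A$ on cohomology is induced by the derivative $d_e\tilde A$ on the Lie algebra $\mathfrak{n}$. Applying the identity $\sum_k (-1)^k \operatorname{tr}(\Lambda^k T) = \det(I-T)$ to $T = (d_e\tilde A)^m$ yields Manning's formula
\[
L(A^m) \;=\; \det\!\bigl(I - (d_e\tilde A)^m\bigr) \;=\; \prod_{i=1}^{\dim N}\bigl(1 - \lambda_i^m\bigr),
\]
where $\lambda_1,\dots,\lambda_{\dim N}$ are the eigenvalues of $d_e\tilde A$. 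Since $f\simeq A$, homotopy invariance of the Lefschetz number gives $L(f^m) = L(A^m)$ for every $m$.

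Next, I would use the dynamics of $f$ to bound $|L(A^m)|$ away from zero. After passing to a finite orientation cover if needed to orient the generalized unstable foliation $\mathscr{F}^u_f$, Proposition~\ref{propB Hiraide} shows that for all sufficiently large $m$ every fixed point of $f^m$ has the same index $\pm 1$, so the Lefschetz fixed point theorem gives $|L(f^m)| = \#\Fix(f^m)$. The shadowing property of $f$ together with the spectral decomposition (Theorem~\ref{thm:spectral decomp homeos}) supplies periodic points of every sufficiently high period, so $\#\Fix(f^m) \geq 1$, and hence $|L(A^m)| \geq 1$, for all sufficiently large $m$.

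Finally I would derive a contradiction from the assumption that some $\lambda_i$ lies on the unit circle. Since $\tilde A$ preserves $\Gamma$, the derivative $d_e\tilde A$ has characteristic polynomial in $\Z[x]$ with constant term $\pm 1$, and it preserves the filtration of $\mathfrak{n}$ induced by the lower central series of $N$, acting on each abelian subquotient by an integer unimodular matrix. This reduces the question of unit-circle eigenvalues to the induced maps on the abelian subquotients, where Hiraide's torus argument applies: combined with the lower bound $|L(A^m)|\geq 1$ from the previous step, a unit-circle eigenvalue is shown to produce an $m$ with $L(A^m) = 0$ (or $|L(A^m)|<1$), a contradiction, forcing $|\lambda_i|\neq 1$ for all $i$. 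The main obstacle is precisely this last arithmetic-topological step: a unit-circle eigenvalue of a unimodular integer matrix need not be a root of unity (e.g.\ Salem numbers), so Kronecker's theorem cannot be invoked in a black-box fashion. Hiraide's toral argument circumvents this by exploiting the generalized stable and unstable foliations of $f$ on the lift to the universal cover to produce additional expansivity-based constraints which, paired with the Lefschetz count, rule out eigenvalues on the unit circle; transplanting this to each abelian subquotient of the lower central series is what completes the proof in the nilmanifold setting.
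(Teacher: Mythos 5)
Your proposal follows essentially the same route as the paper's proof: pass to an orientation cover, establish Manning's Lefschetz formula $L(A^m)=\prod_i(1-\lambda_i^m)$ via the action on cohomology, invoke Proposition~\ref{propB Hiraide} to get $|L(f^m)|=\#\Fix(f^m)$ for large $m$, and then appeal to Hiraide's arithmetic argument (\cite[Proposition 6.2]{Hiraide_tori}) to rule out unit-circle eigenvalues. The paper itself is equally terse about this last step, simply citing Hiraide, so the two proofs match in structure and level of detail.

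One cautionary point on your sketch of the final step: the intermediate claim that a unit-circle eigenvalue would force $|L(A^m)|<1$ for some $m$ does not hold as stated. If $\lambda_k$ has modulus one but is not a root of unity, the factor $|1-\lambda_k^m|$ can approach zero only polynomially fast (the eigenvalues are algebraic, so Baker-type bounds apply), while $\prod_{|\lambda_i|>1}|1-\lambda_i^m|$ grows exponentially; the product thus stays bounded away from zero, and the naive lower bound $|L(A^m)|\geq 1$ alone yields no contradiction. You do flag this correctly as the main obstacle and recognize that Salem numbers defeat a Kronecker-style argument, and you then defer to Hiraide exactly as the paper does — so there is no gap beyond what the paper itself leaves to the cited reference — but be aware that the mechanism in Hiraide's proof is not simply ``$|L(A^m)|<1$''; it uses the global product structure on the universal cover and expansivity constraints to rule out central directions, not an elementary Lefschetz inequality.
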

	
	\begin{proof}
		This proof is a combination of the techniques of Manning \cite[Theorem A]{Manning_conjugacy} and Hiraide \cite[Proposition 6.2]{Hiraide_tori}. By passing to a double cover of $M$, it suffices to consider the case where the unstable generalized foliation of $f$, $\F_f^u$, is orientable. The goal of this proof is to show that $A$ is hyperbolic. More formally, we need to show that $D_eA$ has no eigenvalues of absolute value one. Let $\lambda_1,...,\lambda_n$ be the eigenvalues of $D_eA$ (counted with multiplicity).
		
		The first step in this proof is to relate the number of $m$-periodic points of $f$, for $m\in \N$, to the eigenvalues of $D_eA$. We do this using the Lefschetz fixed point theorem. First, recall that since $f^m$ and $A^m$ are homotopic, their Lefschetz numbers are the same, i.e. $L(f^m)=L(A^m)$. Since $\F_f^u$ is orientable, the Lefschitz fixed point theorem and Proposition \ref{propB Hiraide} imply that the number of fixed points of $f^m$, denoted $N(f^m)$, is given by $N(f^m)=|L(f^m)|$ for sufficiently large $m$. Now, recall from \cite{Manning_nilmanifolds} that we can also write $L(f^m)=L(A^m)=\prod_{i=1}^n (1-\lambda_i^m)$. We've therefore shown that, for sufficiently large $m$, the number of fixed points of $f^m$ is given by
			\begin{equation}\label{eqn: number of fixed points eigenvalues}
				P_m(f)= N(f^m)=\prod_{i=1}^n \left|1-\lambda_i^m\right|.
			\end{equation}
		This equation cannot hold if $A$ is not hyperbolic by arguments given in the proof of \cite[Proposition 6.2]{Hiraide_tori}.
	\end{proof}
	
	Recall that when we defined the `linearization' $A$ of an Anosov homeomorphism $f:M\to M$ of a nilmanifold, we only were able to define $A$ up to an inner automorphism of $N$ because we didn't know whether $f$ had any fixed points. We are now equipped to show that $f$ does indeed have fixed points.
	
	\begin{corollary}
		An Anosov homeomorphism of a nilmanifold has at least one fixed point.
	\end{corollary}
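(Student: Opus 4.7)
The plan is to deduce the existence of a fixed point directly from the Lefschetz fixed point theorem, using the hyperbolicity of the linearization $A$ just established in Proposition \ref{prop: hyperbolicitylinearization}.

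First I would observe that, by the very construction of $A$ carried out earlier in the section, $f$ is homotopic to the nilmanifold automorphism $A$, so the two Lefschetz numbers coincide: $L(f) = L(A)$. Next I would invoke Manning's formula \cite{Manning_nilmanifolds} for the Lefschetz number of a nilmanifold automorphism, which is the same identity already used in the proof of Proposition \ref{prop: hyperbolicitylinearization}, namely
$$L(A) = \prod_{i=1}^n (1 - \lambda_i),$$
where $\lambda_1, \ldots, \lambda_n$ are the eigenvalues (counted with multiplicity) of $D_e A$ acting on the Lie algebra of $N$.

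By Proposition \ref{prop: hyperbolicitylinearization} the automorphism $A$ is hyperbolic, so none of the eigenvalues $\lambda_i$ has absolute value $1$; in particular $\lambda_i \neq 1$ for every $i$, and therefore each factor $(1 - \lambda_i)$ is nonzero. Hence $L(f) = L(A) \neq 0$, and the Lefschetz fixed point theorem applied to the continuous self-map $f$ of the closed manifold $M$ produces at least one fixed point.

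I do not anticipate a real obstacle once Proposition \ref{prop: hyperbolicitylinearization} is in hand. Unlike that proposition, no passage to an orientable double cover is needed here: the Lefschetz number is defined for any continuous self-map of a closed manifold, the complex eigenvalues of $D_e A$ appear in conjugate pairs so that the product $L(A)$ is automatically a real number, and its nonvanishing alone suffices to conclude.
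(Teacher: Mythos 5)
Your argument is exactly the one the paper gives (though the paper compresses it to a single sentence): invoke Proposition \ref{prop: hyperbolicitylinearization} to get hyperbolicity of $A$, so $L(f)=L(A)=\prod_i(1-\lambda_i)\neq 0$, and conclude via the Lefschetz fixed point theorem. Correct and the same approach.
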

	\begin{proof}
		This follows immediately from the Lefschetz fixed point theorem and Proposition \ref{prop: hyperbolicitylinearization}.
	\end{proof}
	
	Note that by conjugating the Anosov homeomorphism $f:M\to M$ by a translation, we can assume without loss of generality that $f$ fixes the point $e\Gamma\in M$. We let $A$ be the hyperbolic `linearization' of $f$ described above. 
	
	The goal of the rest of the proof is to construct a conjugacy between $A$ and $f$. To do this we first construct a semiconjugacy, $h:M\to M$, between $A$ and $f$.
	
	\begin{prop}\label{prop: semiconjugacy identity fixed pt}
		Let $M=N/\Gamma$ be a nilmanifold, and let $f:M\to M$ be a homeomorphism that fixes the point $e\Gamma\in M$. If $f$ is freely homotopic to a hyperbolic nilmanifold automorphism $A:M\to M$, then there exists a continuous map $h:M\to M$ (freely) homotopic to the identity such that $A\circ h=h\circ f$ and $h(e\Gamma)=e\Gamma$. Furthermore, the map $h$ is the unique map freely homotopic to the identity fixing $e\Gamma$.
	\end{prop}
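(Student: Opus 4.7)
The plan is to construct $h$ on the universal cover $N$ and then push down to $M$. First, lift $f$ to the unique continuous map $\widetilde f : N \to N$ with $\widetilde f(e) = e$, and let $\widetilde A : N \to N$ denote the group automorphism covering $A$. By the construction of the linearization $A$ from the previous subsection (adjusting by an inner automorphism if necessary), one may arrange that $A_\ast = f_\ast$ as endomorphisms of $\pi_1(M, e\Gamma) = \Gamma$, so that the free homotopy $f \simeq A$ may be promoted to a pointed homotopy relative to $e\Gamma$. Lifting this pointed homotopy with initial value $\widetilde A$ yields a lift of $f$ fixing $e$, which must be $\widetilde f$. From this I extract (i) the uniform bound $d(\widetilde f(x), \widetilde A(x)) \leq C$ for some constant $C$, and (ii) the equivariance $\widetilde f(\gamma \cdot x) = \widetilde A(\gamma) \cdot \widetilde f(x)$ for all $\gamma \in \Gamma$ and $x \in N$.

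The main step is to produce $\widetilde h : N \to N$ satisfying $\widetilde A \circ \widetilde h = \widetilde h \circ \widetilde f$, equivariant in the sense $\widetilde h(\gamma x) = \gamma \widetilde h(x)$ (so that it descends to $h : M \to M$), and at bounded $C^0$-distance from $\id_N$ (so that $h$ is homotopic to $\id$). I use shadowing. Since $A$ is a hyperbolic nilmanifold automorphism, it is Anosov on $M$, and its lift $\widetilde A$ is globally expansive and satisfies the shadowing property on $N$: in log coordinates $\widetilde A$ is conjugate to the hyperbolic linear map $d_e\widetilde A$ on $\mathfrak{n}$, from which global expansivity and global shadowing follow via the product structure $N = N^s \cdot N^u$ coming from the hyperbolic decomposition of $\mathfrak{n}$. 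The bound (i) makes $\{\widetilde f^n(x)\}_{n \in \Z}$ a $C$-pseudo-orbit for $\widetilde A$, so shadowing produces a unique $\widetilde h(x) \in N$ whose $\widetilde A$-orbit shadows it. Continuity of $\widetilde h$ comes from continuous dependence of the shadow on the pseudo-orbit; the intertwining relation $\widetilde A(\widetilde h(x)) = \widetilde h(\widetilde f(x))$ holds because both sides shadow $\{\widetilde f^{n+1}(x)\}$ and expansivity identifies shadows. Using a left-invariant metric on $N$ and (ii), the orbit $\{\widetilde A^n(\gamma \cdot \widetilde h(x))\}$ also shadows $\{\widetilde f^n(\gamma x)\}$, so $\widetilde h(\gamma x) = \gamma \widetilde h(x)$. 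The bound $d(\widetilde h(x), x) \leq C'$ together with a $\Gamma$-equivariant continuous path in $N$ from $x$ to $\widetilde h(x)$ produces a $\Gamma$-equivariant homotopy from $\widetilde h$ to $\id_N$, whence $h \simeq \id_M$. Finally $\widetilde A(\widetilde h(e)) = \widetilde h(\widetilde f(e)) = \widetilde h(e)$, and since $e$ is the unique fixed point of the hyperbolic automorphism $\widetilde A$, we conclude $\widetilde h(e) = e$, i.e., $h(e\Gamma) = e\Gamma$.

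For uniqueness, suppose $h' : M \to M$ is another such map. Lift to $\widetilde h' : N \to N$ fixing $e$ and $\Gamma$-equivariantly; lifting a pointed homotopy from $h'$ to $\id$ yields a uniform bound $d(\widetilde h'(x), x) \leq C''$. Then $\{\widetilde A^n(\widetilde h'(x))\} = \{\widetilde h'(\widetilde f^n(x))\}$ stays at bounded distance from $\{\widetilde f^n(x)\}$, hence shadows that pseudo-orbit, and expansivity forces $\widetilde h' = \widetilde h$. The chief obstacle is the verification of global expansivity and global shadowing for $\widetilde A$ on the noncompact group $N$ rather than the compact shadowing inherited trivially from $A$ on $M$; this is precisely where the nilpotent structure enters, through the global product decomposition $N = N^s \cdot N^u$ furnished by the hyperbolic splitting of $\mathfrak{n}$ under $d_e \widetilde A$.
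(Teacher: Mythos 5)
Your argument is correct in outline, but it takes a different route from the paper's: you essentially \emph{reprove} the semiconjugacy theorem from scratch, whereas the paper defers the entire existence and uniqueness assertion to a citation of Franks (\cite[Theorem 2.2]{Franks_Anosov_diffeos}). The paper's proof is three sentences: since $M$ is a $K(\pi,1)$ and $f$, $A$ are freely homotopic with $A$ hyperbolic, one exhibits a pointed map $h_0\simeq\id$ whose induced $\pi_1$-automorphism intertwines $A_*$ and $f_*$, and Franks' theorem then gives the unique basepoint-preserving $h\simeq h_0$ with $A\circ h=h\circ f$. Your shadowing-on-the-universal-cover construction is precisely the proof of that cited theorem, so in content the two routes agree; yours buys self-containment at the cost of having to establish the global shadowing and expansivity of $\widetilde A$ on the noncompact group $N$, which is exactly what the black box encapsulates.

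One step you assert too quickly is the global shadowing/expansivity of $\widetilde A$ on $N$ with the left-invariant metric, which you reduce to the linear model via \emph{log coordinates}. The exponential $\exp:\mathfrak n\to N$ is a diffeomorphism, and $\widetilde A=\exp\circ\,d_e\widetilde A\circ\exp^{-1}$, but $\exp$ is \emph{not} a quasi-isometry between the Euclidean metric on $\mathfrak n$ and the left-invariant metric on $N$ when $N$ is non-abelian (e.g. in the Heisenberg group central distances grow like square roots). Shadowing for the linear map $d_e\widetilde A$ therefore does not literally transport through $\exp$. The fix is to argue directly on $N$: writing a $\delta$-pseudo-orbit as $x_{n+1}=\widetilde A(x_n)\cdot e_n$ with $e_n$ near $e$ (using left-invariance), one seeks $w_n$ near $e$ with $\widetilde A^n(z)=x_n w_n$, leading to the recursion $w_{n+1}=e_n^{-1}\widetilde A(w_n)$. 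Solving this uses the decomposition $N=N^s\cdot N^u$ into the closed Lie subgroups integrating $\mathfrak n^s\oplus\mathfrak n^u$, and a genuinely uniform estimate near the identity (where $\exp$ \emph{is} bi-Lipschitz). This is more work than ``conjugate in log coordinates,'' though it is true and is the content of the Franks--Manning technology being cited. You should also note that the adjustment of $A$ by an inner automorphism of $\Gamma$ in the first paragraph must be reconciled at the end with the statement's conclusion about the original $A$, though this is routine since conjugation by $\gamma_0\in\Gamma$ is trivial on $M$.
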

	\begin{proof}
		Since $f$ and $A$ are freely homotopic, $M$ is a $K(\pi,1)$, and $A$ is hyperbolic, there exists a homomorphism $(h_0)_*:\pi_1(M,e\Gamma)\to \pi_1(M,e\Gamma)$, that is induced by a base point preserving map $h_0:M\to M$ that is freely homotopic to the identity, such that $A_*\circ (h_0)_*=(h_0)_*\circ f_*$. Under these conditions, \cite[Theorem 2.2]{Franks_Anosov_diffeos} states that there exists a unique continuous base point preserving map, $h:M\to M$, that is homotopic to $h_0$, such that $A\circ h=h\circ f$. 
	\end{proof}

	We complete the proof of Theorem \ref{thm:HiraideNilmanifolds} by proving that the semiconjugacy, $h:M\to M$, from Proposition \ref{prop: semiconjugacy identity fixed pt} is actually a conjugacy. To do this, we just need to show that $h$ is a homeomorphism. 
	
	\begin{prop}\label{prop:h_homeo}
		$h:M\to M$ is a homeomorphism.
	\end{prop}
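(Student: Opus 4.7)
The plan is to work on the universal cover: lift $f$, $A$, $h$ to maps $\tilde f, \tilde A, \tilde h: N \to N$ satisfying $\tilde A \tilde h = \tilde h \tilde f$, show that $\tilde h$ is a bijection, and descend. Since $h$ is homotopic to the identity of $M$, the homotopy lifts and forces $\tilde h$ to commute with the deck action of $\Gamma$, so bijectivity of $\tilde h$ yields bijectivity of $h$; a continuous bijection on the compact manifold $M$ is automatically a homeomorphism, which completes the proof.

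The argument for bijectivity of $\tilde h$ has a preparatory boundedness step followed by surjectivity and injectivity. For boundedness, the lifted homotopy from $\id_N$ to $\tilde h$ descends to a homotopy on the compact manifold $M$, so its tracks have uniformly bounded length: there is $K > 0$ with $d(\tilde h(\tilde x), \tilde x) \leq K$ for every $\tilde x \in N$. In particular, every fiber $\tilde h^{-1}(\tilde z)$ has diameter at most $2K$, and $\tilde h$ is proper. For surjectivity, the map $h$ is homotopic to $\id_M$ so has degree $\pm 1$ and is surjective on $M$; combined with boundedness, this gives surjectivity of $\tilde h$ on $N$.

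The main obstacle is injectivity. If $\tilde h(\tilde x) = \tilde h(\tilde y)$ with $\tilde x \neq \tilde y$, applying $\tilde A^n$ to both sides and using the intertwining gives $\tilde h(\tilde f^n \tilde x) = \tilde h(\tilde f^n \tilde y)$, so boundedness forces $d(\tilde f^n \tilde x, \tilde f^n \tilde y) \leq 2K$ for every $n \in \Z$. To rule this out, the plan is to pass (by taking a double cover of $M$ if necessary) to an iterate $f^m$ whose generalized unstable foliation is orientable, so that Proposition \ref{propB Hiraide} and the Lefschetz-theoretic count from the proof of Proposition \ref{prop: hyperbolicitylinearization} apply: $f^m$ and $A^m$ have the same number of fixed points, namely $|L(A^m)| = \prod_i |1 - \lambda_i^m|$. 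The semiconjugacy sends $\Fix(f^m)$ into $\Fix(A^m)$, and one shows this restriction is a bijection by matching Nielsen fixed-point classes on $N$. Here the auxiliary Lemma \ref{lem:one fixed point} is the key new ingredient relative to the torus case: it produces a homotopy from $\tilde f$ to $\tilde A$ on $N$ which introduces no fixed points outside a compact set, so fixed-point classes of the lifts can be paired cleanly. Density of periodic points of $f$ (obtained from the spectral decomposition Theorem \ref{thm:spectral decomp homeos} together with topological mixing on each basic piece, and from the shadowing property applied to the transitive model $A$) then upgrades injectivity on the periodic set to injectivity on all of $M$, using continuity of $\tilde h$ and the uniform bound on the diameters of fibers.

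The hard part is the Nielsen step in the previous paragraph. The delicate issue, and the reason Hiraide's torus argument does not transfer verbatim, is that on a nilmanifold the deck group $\Gamma$ is nonabelian, so one cannot exploit the commutativity of translations to arrange a homotopy whose fixed-point set is manifestly compact. The plan for Lemma \ref{lem:one fixed point} is to use the hyperbolicity of $\tilde A$ on $N$ (in exponential coordinates, via the derivative $D_e A$) to contract all fixed points of the straight-line interpolation into a compact region, mirroring Manning's argument in \cite{Manning_conjugacy} and \cite{Manning_nilmanifolds} but in the homeomorphism category. Once this lemma is in hand, the remaining steps are formal and follow the outline given above.
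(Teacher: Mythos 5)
Your opening moves — lift everything to the universal cover $N$, observe that the homotopy $h \simeq \mathrm{id}_M$ gives the uniform bound $d(\tilde h(\tilde x),\tilde x)\leq K$, deduce properness and surjectivity, and observe that $\tilde h(\tilde x)=\tilde h(\tilde y)$ forces $d(\tilde f^n\tilde x,\tilde f^n\tilde y)\leq 2K$ for all $n\in\Z$ by intertwining — all match the paper's argument. The descent (a continuous bijection on a compact manifold is a homeomorphism) is a perfectly acceptable alternative to the paper's covering-space route. Your reading of Lemma \ref{lem:one fixed point} is slightly off (the paper uses contractibility of $N$ to build the homotopy rather than anything about commutativity of the deck group), but that is a minor discrepancy.

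The real problem is the injectivity step, where you abandon the paper's argument and substitute a Nielsen-theoretic one. The paper does not establish injectivity of $\tilde h$ by matching periodic orbits at all. Instead it proves the \emph{global product structure} for $\tilde f$ on $N$ (Proposition \ref{prop:gproduct structure}: for every $x,y\in N$ the set $\tilde W^s(x)\cap\tilde W^u(y)$ is a single point), and then uses the identity $z:=\tilde W^s(x)\cap\tilde W^u(y)$ to reduce injectivity of $\tilde h$ to injectivity of $\tilde h$ on individual stable and unstable leaves — the latter deferred to Hiraide's computation. Lemma \ref{lem:one fixed point}, which you correctly identify as the nilmanifold-specific ingredient, is used in the paper only as a stepping stone (via Lemmas \ref{lem:nonwandering set whole manifold} and \ref{lem:intersection at most 1 pt} and a Franks-style gluing argument) toward this product structure, not as input to a periodic-point count.

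Your replacement — show $\tilde h$ is injective on lifts of periodic points by pairing Nielsen classes of $f^m$ and $A^m$, then invoke density of periodic points plus continuity plus the bound $\mathrm{diam}(\tilde h^{-1}(\tilde z))\leq 2K$ — has a genuine gap at the last step. Injectivity of a continuous map on a dense subset does not propagate to injectivity everywhere, and neither continuity nor a uniform bound on fiber diameters closes this: a continuous map can be injective on a countable dense set while collapsing entire arcs, and a bound on fiber diameter says nothing about fibers being singletons. The whole point of the product-structure argument is to convert the qualitative statement ``orbits of $\tilde x$ and $\tilde y$ stay $2K$-close forever'' into the precise statement ``$\tilde x$ and $\tilde y$ lie on a common stable leaf and a common unstable leaf after passing through $z$,'' which is what makes the reduction to leaves possible. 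Without Proposition \ref{prop:gproduct structure}, or some explicit argument showing that $\tilde h$-fibers are connected and saturated by leaves (which you have not supplied), the upgrade from periodic points to all of $N$ is unjustified. You should either prove the product structure as the paper does and reduce to leaves, or supply a concrete mechanism by which bounded fibers and periodic-point injectivity force fibers to be trivial.
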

	
	\begin{proof}
		The main step in this argument is to show that $h$ is a local homeomorphism. This combined with the fact that $h$ is surjective (because $h$ is homotopic to the identity and is a proper map) will imply that $h:(M,e\Gamma)\to (M,e\Gamma)$ is a covering map. Then, since $h$ is homotopic to the identity, the covering spaces $h:(M,e\Gamma)\to (M,e\Gamma)$ and $\id:(M,e\Gamma)\to (M,e\Gamma)$ are isomorphic, i.e. that there is a homeomorphism $g:M\to M$ such that $h=\id\circ g$. This will complete the argument that $h$ is a homeomorphism, and thus gives a conjugacy between $A$ and $f$. 
		
		Thus, all that remains is to show that $h$ is a local homeomorphism. We do this by showing that its lift $\tilde h:(N,e)\to (N,e)$
			\footnote{When we take this lift, we lift the point $e\Gamma\in M$ to the point $e\in N$. In the rest of this section, we'll be lifting $e\Gamma\in M$ to $e\in N$ unless otherwise noted.} 
		is a local homeomorphism. Recall that Brower's theorem on invariance of domain states that a locally injective continuous map between two manifolds without boundary of the same dimension is a local homeomorphism. Thus, we'll be done if we can show that $\tilde h$ is locally injective. In fact, we'll show that $\tilde h$ is injective. 
		
		First, we note that $f$ lifts to an Anosov homeomorphism $\tilde f:(N,e)\to (N,e)$. We recall from Section \ref{subsect:lifts} that the stable and unstable sets for $\tilde f$, denoted $\F_{\tilde f}^s$ and $\F_{\tilde f}^u$, are transverse generalized foliations on $N$. The first step in the argument that $\tilde h$ is injective is to show that it suffices to prove injectivity of $\tilde h$ on stable and unstable leaves of $\tilde f$. This follows from the fact that the stable and unstable generalized foliations for $\tilde f$ establish a global product structure for $N$, i.e. 
		
		\begin{prop}\label{prop:gproduct structure}
			For any points $x,y\in N$ the stable leaf through $x$ and the unstable leaf through $y$ intersect at exactly one point, i.e. the set $\tilde W^s(x)\cap \tilde W^u(y)$ contains exactly one point.
		\end{prop}
	
		Before going through the proof of Proposition \ref{prop:gproduct structure}, we show how this proposition implies that injectivity of $\tilde h$ follows from injectivity on stable and unstable leaves. This argument follows that in \cite[p.387-388]{Hiraide_tori}. Take $x,y\in N$ such that $\tilde h(x)=\tilde h(y)$. By Proposition \ref{prop:gproduct structure}, we can define a point $z:=\tilde W^s(x)\cap \tilde W^u(y)$ to be the intersection of the stable leaf through $x$ and the unstable leaf through $y$. If we show that $\tilde h(x)=\tilde h(y)=\tilde h(z)$, then injectivity of $\tilde h$ will follow from injectivity of the stable and unstable leaves. Thus, it suffices to show that $\tilde h(y)=\tilde h(z)$. 
		
		To prove $\tilde h(y)=\tilde h(z)$, recall that since $\tilde A$ is a hyperbolic automorphism of $N$, for arbitrary $M_1>0$, the map $\tilde A$ is expansive with expansive constant $M_1$. Thus, to show that $\tilde h(y)=\tilde h(z)$, it suffices to show that there exists a constant $M_1>0$ such that for all $n\in \Z$, 
			\begin{equation}\label{eqn:gproduct sufficient0}
				d\left( \tilde A^n \circ \tilde h(z), \tilde A^n\circ \tilde h(y) \right)\leq M_1.
			\end{equation}
		To see this, first recall that since $\tilde A\circ \tilde h=\tilde h\circ \tilde f$ and $\tilde h(x)=\tilde h(y)$, we have that for all $n\in \Z$,
			\begin{equation*}
					d\left( \tilde A^n \circ \tilde h(z), \tilde A^n\circ \tilde h(y) \right)
				=
					d\left( \tilde h\circ \tilde f^n(z),\tilde h\circ \tilde f^n(y) \right)
				=
					d\left( \tilde h\circ \tilde f^n(z),\tilde h\circ \tilde f^n(x) \right).
			\end{equation*}
		In light of these two equations, to prove \ref{eqn:gproduct sufficient0}, it's sufficient to prove that there exists a constant $M_1>0$ such that for all $n\geq 0$, the following two inequalities hold.
			\begin{align*}
					d\left( \tilde h\circ \tilde f^{-n}(z),\tilde h\circ \tilde f^{-n}(y) \right) \leq M_1\\
					d\left( \tilde h\circ \tilde f^n(z),\tilde h\circ \tilde f^n(x) \right)\leq M_1
			\end{align*}
		These inequalities follow immediately from the following two observations,
		\begin{itemize}
		\item 
			Since $h$ is homotopic to the identity, the map $\tilde h$ is a bounded distance away from the identity, i.e. there exists a constant $M_0>0$ such that $\forall w\in N$, $d\left( \tilde h(w),w \right)\leq M_0$.
		\item
			The facts that $z\in \tilde W^s(x)$ and $z\in \tilde W^u(y)$ imply that there exists a constant $C>0$ such that for all $n\geq 0$, 
				$$
					d\left( \tilde f^{n}(x), \tilde f^n(z) \right)\leq C
					\quad \text{ and } \quad 
					d\left( \tilde f^{-n}(y), \tilde f^{-n}(z) \right)\leq C
				$$
		\end{itemize}
	
	\end{proof}
	
	Now, all that remains is to prove that the stable and unstable generalized foliations give a global product structure on $N$, i.e. Proposition \ref{prop:gproduct structure}. The proof of this follows the proof of \cite[Lemma 6.8]{Hiraide_tori}, with a single minor change to account for the fact that $M$ is a nilmanifold instead of a torus. We therefore give the general steps in Hiraide's argument and note where modifications need to be made. The argument proceeds in four steps/lemmas. 
	
	\begin{lemma}\label{lem:one fixed point}
		Let $f:M\to M$ be an Anosov homeomorphism of the nilmanifold $M=N/\Gamma$. Let $\tilde f:N\to N$ be a lift of $f$ to $N$, and let $\tilde A:N\to N$ be a hyperbolic automorphism of $N$. If the $C^0$-distance between $\tilde A$ and $\tilde f$ is bounded, then $\tilde f$ has exactly one fixed point.
	\end{lemma}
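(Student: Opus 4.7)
The plan is to follow Hiraide's fixed-point-index strategy for the torus \cite{Hiraide_tori}, with the main modification being a homotopy between $\tilde f$ and $\tilde A$ built from the group exponential of the simply-connected nilpotent Lie group $N$ rather than a straight-line homotopy in $\R^n$. The first task is to show that the fixed point set of $\tilde f$ is bounded. Since $\tilde A$ is hyperbolic, $D_e \tilde A$ has no eigenvalue on the unit circle, so $\id - D_e\tilde A$ is invertible on $\mathfrak{n}$; in particular $e$ is the unique fixed point of $\tilde A$ and its topological fixed point index is $\pm 1$. For any fixed point $x$ of $\tilde f$ we have $d(x,\tilde A(x)) = d(\tilde f(x),\tilde A(x)) \leq M$, where $M$ is the bound on the $C^0$ distance between $\tilde f$ and $\tilde A$. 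Writing $x = \exp(v)$ and using $\tilde A \circ \exp = \exp \circ D_e \tilde A$, this bound translates via BCH into a polynomial equation on $\mathfrak{n}$ whose linear part is the invertible map $\id - D_e\tilde A$. By induction on the nilpotency class of $\mathfrak{n}$, using that $\tilde A$ descends $\tilde A$-equivariantly to a hyperbolic automorphism of the quotient $N/Z(N)$, one checks that the map $x \mapsto \tilde A(x)^{-1} x$ is proper. Hence $x$ lies in a compact set $K \subset N$ depending only on $M$ and $\tilde A$.

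Second, I construct the homotopy. Using $\exp \colon \mathfrak{n} \to N$, set
$$F_s(x) = \tilde A(x) \cdot \exp\bigl(s \cdot \log(\tilde A(x)^{-1} \tilde f(x))\bigr), \qquad s \in [0,1],$$
so $F_0 = \tilde A$ and $F_1 = \tilde f$. A fixed point $x$ of $F_s$ satisfies $\log(\tilde A(x)^{-1} x) = s \log(\tilde A(x)^{-1} \tilde f(x))$, and the right-hand side is uniformly bounded in $(s,x)$ by the $C^0$ bound; the same properness argument then confines the fixed points of all $F_s$ to a common compact set $K' \supset K$.

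Third, I invoke fixed point index theory. Because no fixed point of $F_s$ crosses $\partial K'$ during the homotopy, compactly supported homotopy invariance of the fixed point index gives
$$i(\tilde f, K') \;=\; i(\tilde A, K') \;=\; \mathrm{ind}_e(\tilde A) \;=\; \pm 1,$$
which in particular shows $\tilde f$ has at least one fixed point. For uniqueness, after passing to a double cover if necessary to make $\F^u_f$ orientable, Proposition \ref{propB Hiraide} says that for large $m$ every fixed point of $f^m$ has the same fixed point index $\varepsilon \in \{\pm 1\}$; fixed points of $\tilde f$ project to fixed points of $f$ (hence of $f^m$), and since the covering projection is a local homeomorphism, the local index of $\tilde f$ at each such point equals the local index of $f^m$ at its image, namely $\varepsilon$. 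Combining with $i(\tilde f, K') = \pm 1$ forces exactly one fixed point.

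The main obstacle is the properness statement for $x \mapsto \tilde A(x)^{-1} x$ on the non-abelian nilpotent group $N$, which replaces the trivial invertibility of $I - A$ used on the torus. Unlike the linear situation, the map is only polynomial after passing to the Lie algebra, and one must verify by induction on nilpotency class that the higher-order BCH corrections cannot cancel the dominant linear term $\id - D_e\tilde A$ at infinity. Once properness is in hand, the homotopy construction and the index bookkeeping proceed essentially as in Hiraide's argument, with the orientability passage and Proposition \ref{propB Hiraide} supplying the matching signs needed to upgrade existence to uniqueness.
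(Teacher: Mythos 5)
Your overall strategy matches the paper's: build a homotopy from $\tilde A$ to $\tilde f$ whose fixed point sets stay inside a common compact set, then use homotopy invariance of the fixed-point index to transfer $i(\tilde A, K')=\pm1$ to $\tilde f$. Where the paper simply asserts such a homotopy exists ``since $N$ is contractible,'' you construct one explicitly via the group exponential and give the BCH/central-series properness argument that confines the fixed points --- these are genuine improvements in detail, and your exponential-twisted homotopy correctly sidesteps the fact that a straight-line homotopy in $\exp$-coordinates would not obviously keep the perturbation bounded once $N$ is non-abelian.

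The uniqueness step, however, has a real gap. Proposition \ref{propB Hiraide} equalizes the indices of fixed points of $f^m$ for sufficiently large $m$; it says nothing about $f$, and your claim that ``the local index of $\tilde f$ at each such point equals the local index of $f^m$ at its image'' is false in general, since the fixed-point index can change under iteration (for instance $f(x,y)=(-2x,\,y/2)$ has index $+1$ at the origin while $f^2(x,y)=(4x,\,y/4)$ has index $-1$). So you cannot conclude that all fixed points of $\tilde f$ carry a common sign $\varepsilon$, and the count $k\varepsilon=\pm1$ does not follow as written. The repair is short: carry out your entire index computation for $\tilde f^m$ versus $\tilde A^m$ (the $C^0$ bound, the homotopy, and the properness all persist under taking powers), conclude $\#\Fix(\tilde f^m)=1$ by combining $i(\tilde f^m, K')=\pm1$ with Proposition \ref{propB Hiraide} applied to $f^m$, and then use $\emptyset\neq\Fix(\tilde f)\subset\Fix(\tilde f^m)$; you should also fix the orientation double cover and the choice of lift \emph{before} choosing $m$. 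For comparison, the paper defers the ``at most one fixed point'' half to Hiraide's Lemma 6.7, which derives uniqueness as part of the global product structure argument rather than from Proposition \ref{propB Hiraide}, so your route is a legitimate alternative once the $f$-versus-$f^m$ issue is repaired.
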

	\begin{proof}
		This proof is a slight modification of the proof of \cite[Lemma 6.5]{Hiraide_tori}. The main ingredients in this proof are the Lefschetz fixed point theorem and the homotopy invariance of the Lefschetz number. Since we're working in a space that isn't compact, we need to be careful when using Lefschetz numbers.\footnote{Recall that the Lefschetz number of a map $g:X\to X$ is only defined if the set of fixed points $\Fix(g)$ is compact. Two maps have the same Lefschetz number if they are homotopic via a map that does not introduce fixed points out of a compact set. \cite{Dold}}
		Since $\tilde A$ is a hyperbolic automorphism, its Lefschetz number is $L(\tilde A)=\pm1$. The Lefschetz number of $\tilde f$ is defined because $\tilde f$ is a bounded distance away from $\tilde A$. 
		
		Now, we argue that $\tilde f$ has at least one fixed point. Since $N$ is contractible, we can construct a homotopy between $\tilde f$ and $\tilde A$ that does not introduce fixed points outside of a compact set. Thus, $L(\tilde f)=L(\tilde A)=\pm 1$, which implies that $\tilde f$ has at least one fixed point. 
		
		The fact that  $\tilde f$ has at most one fixed point follows from arguments in \cite[Lemma 6.7]{Hiraide_tori}.
	\end{proof}
	
	We now prove that the non-wandering set of $f$ is the whole nilmanifold. 
	
	\begin{lemma}\label{lem:nonwandering set whole manifold}
		The nonwandering set of an Anosov homeomorphism $f:M\to M$ of a nilmanifold $M=N/\Gamma$ is the entire nilmanifold, i.e. $\Omega(f)=M$. 
	\end{lemma}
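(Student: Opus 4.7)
The plan is to show that periodic points of $f$ are dense in $M$, which immediately gives $\Om(f)=M$ since $\Om(f)$ is closed and contains every periodic point. Applying the spectral decomposition (Theorem \ref{thm:spectral decomp homeos}), I would write $\Om(f)=X_1\sqcup\cdots\sqcup X_k$ as a disjoint union of basic sets on which a suitable iterate of $f$ is topologically mixing. Expansivity together with the shadowing property yields the Anosov closing lemma, so periodic points are dense in each $X_i$ and hence in $\Om(f)$. It therefore suffices to produce a periodic point of $f$ inside every nonempty open $U\subseteq M$.

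To count periodic points, I would use the Lefschetz fixed point theorem together with Proposition \ref{propB Hiraide}: for all sufficiently large $m$, every fixed point of $f^m$ has a common index $\pm1$, so
\[
\#\Fix(f^m)=|L(f^m)|=|L(A^m)|=\prod_{i=1}^n\bigl|1-\lambda_i^m\bigr|=\#\Fix(A^m),
\]
by homotopy invariance of the Lefschetz number and Manning's computation from \cite{Manning_nilmanifolds}, where $\lambda_1,\ldots,\lambda_n$ are the eigenvalues of $D_eA$.

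To realize this equality of counts geometrically, I would pass to the universal cover. For each $\gamma\in\Gamma$ and each $m\geq 1$, the lift $\gamma\circ\tilde f^m$ of $f^m$ lies a bounded $C^0$-distance from $\gamma\circ\tilde A^m$, whose unique fixed point $\tilde y_\gamma$ exists because $\tilde A$ is hyperbolic. The argument of Lemma \ref{lem:one fixed point}, applied to $f^m$ with $\tilde A$ replaced by $\gamma\circ\tilde A^m$, then yields a unique fixed point $\tilde z_\gamma$ of $\gamma\circ\tilde f^m$, and $d(\tilde z_\gamma,\tilde y_\gamma)\leq M_0$, where $M_0$ is the uniform bound on $d(\tilde h,\id_N)$. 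As $\gamma$ varies over an appropriate set of representatives, the projections $\pi(\tilde z_\gamma)$ and $\pi(\tilde y_\gamma)$ exhaust $\Fix(f^m)$ and $\Fix(A^m)$ respectively, and $h$ carries $\pi(\tilde z_\gamma)$ to the corresponding $\pi(\tilde y_\gamma)$.

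Given a nonempty open $U\subseteq M$, I lift to an open $\tilde U\subseteq N$. Periodic points of the hyperbolic nilmanifold automorphism $A$ are dense in $M$, and in fact equidistribute at all scales as $m\to\infty$ because $\#\per_m(A)=\prod|1-\lambda_i^m|$ grows exponentially while $M$ has finite volume. Choosing $m$ large and $\gamma$ so that $\tilde y_\gamma\in\tilde U$ lies at distance more than $M_0$ from $\partial\tilde U$, we conclude $\tilde z_\gamma\in\tilde U$ as well, so $\pi(\tilde z_\gamma)$ is the desired periodic point of $f$ in $U$. The main obstacle will be the equidistribution step: on the torus this follows from the explicit description of $\per_m(A)$ as $\tfrac{1}{m}\Z^n/\Z^n$-rationals, but in the nilmanifold setting one must use Malcev coordinates together with the exponential growth of $\#\per_m(A)$ to produce an $m$-periodic point of $A$ deep inside any prescribed open set.
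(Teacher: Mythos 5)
Your first three steps are sound and do follow the paper's framework: the reduction to density of periodic points via the spectral decomposition and the closing lemma, the Lefschetz count $\#\Fix(f^m)=\prod_i|1-\lambda_i^m|$ via Proposition~\ref{propB Hiraide} and Manning's computation, and the lift picture in which each $\gamma\circ\tilde f^m$ has a unique fixed point $\tilde z_\gamma$ with $\tilde h(\tilde z_\gamma)=\tilde y_\gamma$ and $d(\tilde z_\gamma,\tilde y_\gamma)\leq M_0$ are all correct and consistent with the Franks--Manning--Hiraide machinery.

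The gap is in the last step, and it is not the one you flag. You try to deduce that $\pi(\tilde z_\gamma)\in U$ by finding a periodic point $\tilde y_\gamma$ of $\gamma\circ\tilde A^m$ lying at distance more than $M_0$ from $\partial\tilde U$, so that the $M_0$-perturbation $\tilde z_\gamma$ stays inside $\tilde U$. But $M_0=\sup_{w}d(\tilde h(w),w)$ is a \emph{fixed} constant depending only on $h$, not on $m$. If $U$ has diameter less than $2M_0$ (which is unavoidable, since $U$ is an arbitrary nonempty open set and $M$ is compact), then no point of $\tilde U$ lies at distance more than $M_0$ from $\partial\tilde U$; taking $m$ larger only tightens the spacing of the $\tilde y_\gamma$'s, which is beside the point. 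So this argument proves only that $\per(f)$ is $M_0$-dense in $M$, not that it is dense, and $M_0$-density of a closed invariant set does not force it to be all of $M$. Equidistribution of $\per_m(A)$ in Malcev coordinates --- the step you identify as the main obstacle --- is a red herring: even perfect equidistribution cannot place a point of $\per_m(A)$ at distance $>M_0$ from the complement of a set of diameter $<2M_0$.

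The paper simply invokes Hiraide's Proposition 6.6, whose argument must therefore proceed differently from what you propose; in particular it cannot rest on pushing periodic points of $A$ into arbitrarily small open sets and absorbing the $C^0$-error $M_0$. One has to derive $\Omega(f)=M$ from a structural argument (e.g.\ ruling out a proper basic set via the spectral decomposition together with the exact Lefschetz count and Lemma~\ref{lem:one fixed point}) rather than from the naive density-plus-perturbation scheme. As written, your proposal does not close the gap, and the sentence claiming one can ``produce an $m$-periodic point of $A$ deep inside any prescribed open set'' is where the argument fails.

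Two smaller remarks: your description of $\per_m(A)$ on the torus as ``$\tfrac1m\Z^n/\Z^n$-rationals'' is inaccurate --- the $m$-periodic points are $\ker(A^m-I)/\Z^n$, a finite group of order $|\det(A^m-I)|$, not the $\tfrac1m$-rational points --- though this does not affect density; and the identification of $\tilde h(\tilde z_\gamma)$ with $\tilde y_\gamma$ needs the $\Gamma$-equivariance of $\tilde h$, which you use implicitly and which does hold because $h$ is homotopic to the identity.
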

	\begin{proof}
		This follows from the same argument as \cite[Proposition 6.6]{Hiraide_tori}. 
	\end{proof}
	
	We begin to show that the stable and unstable generalized foliations of $\tilde f$ give a global product structure on $N$.  
	
	\begin{lemma}\label{lem:intersection at most 1 pt}
		For $x,y\in N$, the stable manifold of $\tilde f$ at $x$, $\tilde W^s(x)$, and the unstable manifold of $\tilde f$ at $y$, $\tilde W^u(y)$, intersect at at most one point.
	\end{lemma}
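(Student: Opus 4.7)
My plan is to argue by contradiction: suppose $z_1\neq z_2$ both lie in $\tilde W^s(x)\cap \tilde W^u(y)$, and derive a contradiction by combining the semiconjugacy $\tilde h:N\to N$ from Proposition~\ref{prop: semiconjugacy identity fixed pt} with the global product structure of the hyperbolic nilmanifold automorphism $\tilde A$.

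My first step is to show that $\tilde h$ carries stable sets of $\tilde f$ into stable manifolds of $\tilde A$, and likewise for unstable sets. Since $h:M\to M$ is continuous on the compact nilmanifold, it is uniformly continuous; hence its lift $\tilde h$ is uniformly continuous on pairs of points at sufficiently small distance in $N$, where the covering projection $p:N\to M$ is a local isometry. Combined with $\tilde h\circ \tilde f^n = \tilde A^n\circ \tilde h$, the condition $d(\tilde f^n(z),\tilde f^n(x))\to 0$ translates to $d(\tilde A^n\tilde h(z),\tilde A^n \tilde h(x))\to 0$, so $\tilde h(z)\in W^s_{\tilde A}(\tilde h(x))$, and analogously on the unstable side. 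Since $\tilde A$ is a hyperbolic automorphism of the simply-connected nilpotent Lie group $N$, the group factors as $N=N^u\cdot N^s$ into its unstable and stable subgroups, and $W^s_{\tilde A}(p)\cap W^u_{\tilde A}(q)$ is a single point for any $p,q\in N$. Applied to $p=\tilde h(x),\;q=\tilde h(y)$, this forces $\tilde h(z_1)=\tilde h(z_2)$. Iterating the semiconjugacy and using $d(\tilde h,\id)\leq M_0$ then gives $d(\tilde f^n(z_1),\tilde f^n(z_2))\leq 2M_0$ for all $n\in \Z$, while the asymptotic decay $d(\tilde f^n(z_1),\tilde f^n(z_2))\to 0$ as $|n|\to\infty$ (from the fact that $z_1,z_2$ share both a stable and an unstable leaf of $\tilde f$) lets me select $|n|$ so large that this distance is below the uniform radius $\rho>0$ on which $p$ is injective (such $\rho$ exists because $M$ is compact and $\G$ acts properly discontinuously on $N$).

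For such $n$ we get $p(\tilde f^n(z_1))=p(\tilde f^n(z_2))$, and applying $f^{-n}$ yields $p(z_1)=p(z_2)$, so $z_2=z_1\cdot \gamma$ for some $\gamma\in \G$. Writing $\tilde f \circ R_\gamma = R_{\sigma(\gamma)}\circ \tilde f$, where $R_\gamma$ denotes right translation and $\sigma:\G\to\G$ is the automorphism induced by $\tilde f$, one computes $\tilde f^n(z_2)=\tilde f^n(z_1)\cdot \sigma^n(\gamma)$; again for $|n|$ large, the inequality $d(\tilde f^n(z_1),\tilde f^n(z_1)\cdot \sigma^n(\gamma))<\rho$ forces $\sigma^n(\gamma)=e$, and bijectivity of $\sigma$ then gives $\gamma=e$, hence $z_1=z_2$. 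The main delicate step I anticipate is the initial transfer of the $\tilde f$-dynamical asymptotics to those of $\tilde A$ via $\tilde h$ (since stable/unstable sets of $\tilde f$ are only generalized foliations, not foliations); once this is in place, the remainder reduces to combining the deck-displacement estimate at the covering-injectivity scale with the bijective action of $\sigma$ on $\G$.
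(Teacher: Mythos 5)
Your opening steps are sound: uniform continuity of $\tilde h$ (as the lift of a continuous map on the compact nilmanifold) does let you push the $\tilde f$-asymptotics through $\tilde A^n\circ \tilde h=\tilde h\circ\tilde f^n$ to conclude $\tilde h(z)\in W^s_{\tilde A}(\tilde h(x))$ and $\tilde h(z)\in W^u_{\tilde A}(\tilde h(y))$, and the global product structure for the hyperbolic automorphism $\tilde A$ of the simply connected nilpotent group then gives $\tilde h(z_1)=\tilde h(z_2)$; combined with $d(\tilde h,\id)\leq M_0$ this yields $d(\tilde f^n(z_1),\tilde f^n(z_2))\leq 2M_0$ for all $n$. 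But the pivotal inference that follows is wrong: from $d(\tilde f^n(z_1),\tilde f^n(z_2))<\rho$ you claim $p(\tilde f^n(z_1))=p(\tilde f^n(z_2))$. This inverts what the injectivity radius of a covering gives you. The injectivity radius guarantees that two nearby points with the \emph{same} projection are equal; it says nothing about nearby points having the same projection, and in fact generic nearby points in $N$ do not lie in a common $\G$-orbit. Without $p(z_1)=p(z_2)$ you cannot introduce a deck transformation $\gamma$ and the equivariance computation that follows never starts.

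Note also the near-circularity lurking behind this: you arrive at $\tilde h(z_1)=\tilde h(z_2)$, and the only direct way from there to $z_1=z_2$ is injectivity of $\tilde h$ — but injectivity of $\tilde h$ is precisely what the surrounding Proposition is trying to establish, and this lemma is an ingredient in that proof. The bounded-orbit fact $d(\tilde f^n(z_1),\tilde f^n(z_2))\leq 2M_0$ does not finish the job either, because the expansive constant of the lift $\tilde f$ is not known to exceed $2M_0$ (unlike the hyperbolic automorphism $\tilde A$, which is expansive with arbitrary constant). The paper takes a genuinely different route, following Hiraide's Lemma 6.7: it combines the uniqueness of the fixed point of $\tilde f$ (Lemma \ref{lem:one fixed point}), the fact that $\Omega(f)=M$ (Lemma \ref{lem:nonwandering set whole manifold}), and the spectral decomposition (Theorem \ref{thm:spectral decomp homeos}) to rule out two intersection points — an argument that works purely with the dynamics of $f$ and $\tilde f$ and does not route through the semiconjugacy $\tilde h$.
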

	\begin{proof}
		This follows from the previous two lemmas along with the spectral decomposition. The details are exactly the same as those in \cite[Lemma 6.7]{Hiraide_tori}.
	\end{proof}
	
	Now to complete the proof of Proposition \ref{prop:gproduct structure} we just need to show that $\tilde W^s(x)$ and $\tilde W^u(y)$ actually intersect for each $x,y\in N$. This follows by gluing together local product neighborhoods given by $\F_f^s$ and $\F_f^u$ using the arguments in \cite[Lemma 1.6]{Franks_Anosov_tori}. 

	Now, all that remains in the proof of Theorem \ref{thm:HiraideNilmanifolds} is to show that $\tilde h$ is injective on the stable and unstable leaves of $\tilde f$, which proceeds exactly as in \cite{Hiraide_tori}.

	\bibliographystyle{apalike}
	\bibliography{SmoothModelsPaperReferences}

\end{document}